\numberwithin{equation}{section}
\newtheorem{theor}{Theorem}[section]
\newtheorem{defi}[theor]{Definition}
\newtheorem{corol}[theor]{Corollary}
\newtheorem{remark}[theor]{Remark}
\newtheorem{prop}[theor]{Proposition}
\newtheorem{defin}[theor]{Definition}
\newcommand{\re}{\mathbb{R}}
\newcommand{\co}{\mathbb{C}}
\newcommand{\RR}{\mathbb{R}}
\newcommand{\NN}{\mathbb{N}}
\newcommand{\J}{\mathcal{J}}
\newcommand{\K}{\mathcal{K}}
\newcommand{\W}{\mathcal{W}}
\newcommand{\D}{\mathcal{D}}
\newcommand{\cals}{\mathcal{S}}
\newcommand{\A}{\mathcal{A}}
\newcommand{\bigchi}{\mathop{\mathchoice%
{\mbox{\Large$\chi$}}{\mbox{\large$\chi$}}{\mbox{\normalsize$\chi$}}%
{\mbox{\small$\chi$}}}\nolimits}
\newcommand{\BMO}{{\rm BMO}}
\renewcommand{\emptyset}{\mbox{\rm \O}}
\renewcommand{\Re}{{\rm Re}\,}
\def\div{\mathop{\rm div}}
\def\Int{\mathop{\rm Int}}
\def\supp{\mathop{\rm supp}}
\newcommand{\aver}[1]{-\hskip-0.46cm\int_{#1}}
\newcommand{\textaver}[1]{-\hskip-0.40cm\int_{#1}}
\newcommand{\expt}[1]{e^{\textstyle #1}}
\newcommand{\off}[2]{\mathcal{O}\big(L^{#1}-L^{#2}\big)}
\newcommand{\offw}[2]{\mathcal{O}\big(L^{#1}(w)-L^{#2}(w)\big)}
\newcommand{\fullx}[2]{\mathcal{F}\big(L^{#1}-L^{#2}\big)}
\newcommand{\dec}[1]{\Upsilon\!\left(#1\right)}
\begin{document}
\allowdisplaybreaks

\title[Weighted norm inequalities and elliptic operators]
{Weighted norm inequalities, off-diagonal estimates and elliptic
operators}

\author{Pascal Auscher}

\address{Pascal Auscher
\\
Universit\'{e} de Paris-Sud et CNRS UMR 8628
\\
91405 Orsay Cedex, France} \email{pascal.auscher@math.u-psud.fr}

\author{Jos\'{e} Mar\'{\i}a Martell}

\address{Jos\'{e} Mar\'{\i}a Martell
\\
Instituto de Ciencias Matem\'{a}ticas CSIC-UAM-UC3M-UCM
\\
Consejo Superior de Investigaciones Cient\'{\i}ficas
\\
C/ Serrano 121 \\ E-28006 Madrid, Spain}

\email{chema.martell@uam.es}

\thanks{This work was partially supported by the European Union
(IHP Network ``Harmonic Analysis and Related Problems'' 2002-2006,
Contract HPRN-CT-2001-00273-HARP).
The second author was also supported by MEC ``Programa Ram\'{o}n y Cajal, 2005'', by MEC Grant MTM2007-60952, and by UAM-CM Grant CCG07-UAM/ESP-1664.}

\date{\today}
\subjclass[2000]{42B20, 42B25, 47A06, 35J15, 47A60, 58J35}

\keywords{Calder\'{o}n-Zygmund theory, spaces of homogeneous type, Muckenhoupt weights, singular non-integral operators, commutators with $\BMO$ functions,  elliptic operators in divergence form,  holomorphic calculi, Riesz transforms, square functions, Riemannian manifolds}

\begin{abstract}
We give an overview of the  generalized
Calder\'{o}n-Zygmund theory for ``non-integral'' singular operators, that is, operators without kernels bounds but appropriate off-diagonal estimates. This theory is powerful enough to obtain weighted
estimates for such operators and their  commutators with $\BMO$ functions.
 $L^p-L^q$ off-diagonal estimates when $p\le q$ play an important role and we present them. They are particularly well suited  to the semigroups generated by second
order elliptic operators and the range of exponents $(p,q)$ rules the $L^p$ theory for many   operators constructed from the semigroup and its gradient. Such applications are summarized.
\end{abstract}

\maketitle

\section{Introduction}

The Hilbert transform in $\re$ and the Riesz transforms in $\re^n$ are prototypes of Calder\'{o}n-Zygmund operators. They are singular integral operators represented by kernels with some decay and smoothness. Since the 50's, Calder\'{o}n-Zygmund operators have been thoroughly studied. One first shows that the operator in question is bounded on $L^{2}$  using  spectral theory, Fourier transform or even the powerful $T(1)$, $T(b)$ theorems. Then, the smoothness of the kernel and the Calder\'{o}n-Zygmund decomposition lead to the  weak-type (1,1) estimate, hence strong type $(p,p)$ for $1<p<2$. For $p>2$, one uses duality or interpolation from the $L^\infty$ to $\BMO$ estimate, which involves also the regularity of the kernel. Still another way for $p>2$ relies on good-$\lambda$
estimates via the Fefferman-Stein sharp maximal function. It is interesting to note that both Calder\'{o}n-Zygmund decomposition and good-$\lambda$ arguments use independent smoothness conditions on
the kernel, allowing generalizations in various ways. Weighted estimates for these operators can be proved by means of the Fefferman-Stein sharp maximal function, one shows boundedness on $L^p(w)$ for every $1<p<\infty$ and $w\in A_p$, and a weighted weak-type $(1,1)$ for weights in $A_1$. Again, the smoothness of the kernel plays a crucial role.  We refer the reader to \cite{Gra} and \cite{GR} for more details on this topic.

It is natural to wonder whether the smoothness of the kernel is needed or, even more, whether one can develop a generalized Calder\'{o}n-Zygmund theory in absence of kernels. Indeed, one finds
Calder\'{o}n-Zygmund like operators  without any (reasonable) information on their kernels which, following the implicit terminology
introduced in \cite{BK1}, can be called singular ``non-integral''  operators in the sense that they are still of order 0  but they do not have an
integral representation by a kernel with size and/or smoothness
estimates. The goal is to obtain some range of exponents $p$ for
which $L^p$ boundedness holds, and because this range may not be
$(1,\infty)$, one should abandon any use of kernels. Also, one seeks for weighted estimates trying to determine for which class of Muckenhoupt  these operators are bounded on $L^p(w)$. Again, because the range of the unweighted estimates  can be a proper subset of $(1,\infty)$ the class $A_p$, and even the smaller class $A_1$, might be too large.

The generalized Calder\'{o}n-Zygmund theory  allows us to reach this goal: much of all the classical results extend. As a direct application, we show in Corollary \ref{corol:CZO} that assuming that for a bounded (sub)linear operator $T$ on $L^2$, the boundedness on $L^p$ ---and even on $L^p(w)$ for $A_p$ weights---  follows
from two basic inequalities involving the operator and its action on some functions and not its kernel:
\begin{equation}\label{eq:CZO1}
\int_{\RR^n \setminus 4B} |Tf(x)|\, dx
\le C \int_B |f(x)|\, dx,
\end{equation}
for any ball $B$ and any bounded function $f$ supported on $B$ with
mean $0$, and
\begin{equation}\label{eq:CZO2}
\sup_{x\in B}   |Tf(x)|
\le
C \aver{2B} |Tf(x)|\, dx +  C\inf_{x\in B} Mf(x),
\end{equation}
for any ball  $B$ and any bounded  function $f$ supported on
$\re^n\setminus 4\,B$.  The first condition is used to go below
$p=2$, that is, to obtain that $T$ is of weak-type $(1,1)$. On the
other hand, \eqref{eq:CZO2} yields the estimates for $p>2$ and also
the weighted norm inequalities in $L^p(w)$ for $w\in A_p$,
$1<p<\infty$.  In Proposition \ref{prop:CZO} below, we easily show
that classical Calder\'{o}n-Zygmund operators with smooth kernels satisfy
these two conditions ---\eqref{eq:CZO1} is a simple reformulation of
the H\"{o}rmander condition \cite{Hor} and \eqref{eq:CZO2} uses the
regularity in the other variable.

The previous conditions are susceptible to generalization: in \eqref{eq:CZO1} one could have an $L^{p_0}-L^{p_0}$ estimate with $p_0\ge 1$, and the $L^1-L^\infty$ estimate in \eqref{eq:CZO2} could be replaced by an $L^{p_0}-L^{q_0}$ condition with $1\le p_0<q_0\le \infty$. This would drive us to estimates on $L^p$ in the range $(p_0,q_0)$. Still, the corresponding conditions do not involve the kernel.

Typical families of operators whose ranges of boundedness are proper subsets of $(1,\infty)$ can be built from a divergence form uniformly elliptic complex operators $ L=-\div(A\,\nabla )$ in $\re^n$. One can consider the operator $\varphi(L)$, with  bounded holomorphic functions $\varphi$ on
sectors; the Riesz transform $\nabla L^{-1/2}$; some square functions ``\`a la'' Littlewood-Paley-Stein: one, $g_{L}$, using
only  functions of $L$,  and  the other, $G_{L}$, combining functions of $L$ and the gradient operator;  estimates that control the square root $L^{1/2}$ by the gradient. These operators can be expressed in terms of the semigroup $\{e^{-t\,L}\}_{t>0}$, its gradient $\{\sqrt{t}\,\nabla e^{-t\,L}\}_{t>0}$, and their analytic extensions to some sector in $\co$. Let us stress that those operators  may not be representable with ``usable'' kernels: they are ``non-integral''.

The unweighted estimates for these operators are considered in
\cite{Aus}. The instrumental tools are two criteria for $L^p$
boundedness, valid in spaces of homogeneous type. One is  a sharper
and simpler version of a theorem by Blunck and Kunstmann \cite{BK1},
based on the Calder\'{o}n-Zygmund decomposition, where weak-type $(p,p)$
for  a  \textit{given} $p$ with  $1 \le p<p_{0}$ is presented,
knowing the weak-type $(p_0,p_0)$. We also refer to \cite{BK2} and
\cite{HM} where $L^p$ estimates are shown for the Riesz transforms of
elliptic operators for  $p<2$  starting  from the $L^2$
boundedness proved in \cite{AHLMcT}.

The second criterion is taken from \cite{ACDH}, inspired by the
good-$\lambda$ estimate in the Ph.D. thesis of one of us
\cite{Ma1,Ma2}, where strong type $(p,p)$ for \textit{some} $p>p_{0}$
is proved and applied to Riesz transforms for the Laplace-Beltrami
operators on some Riemannian manifolds. A criterion in the same
spirit for a limited range of $p$'s also appears implicitly in
\cite{CP} towards perturbation theory for linear and non-linear
elliptic equations and  more explicitly  in \cite{Shen1, Shen2}.

These results are  extended in \cite{AM1} to obtain weighted $L^p$
bounds for the operator itself, its commutators with a BMO function
and also vector-valued expressions. Using the machinery developed in
\cite{AM2} concerning off-diagonal estimates in spaces of homogeneous
type, weighted estimates for the operators above are studied in
\cite{AM3}.

Sharpness of the ranges of boundedness has been also discussed in
both the weighted and unweighted case. From \cite{Aus}, we learn that
the operators that are defined in terms of the semigroup (as
$\varphi(L)$ or $g_L$) are ruled by  the range where the semigroup
$\{e^{-t\,L}\}_{t>0}$ is uniformly bounded and/or satisfies
off-diagonal estimates (see the precise definition below). When the
gradient appears in the operators  (as in the Riesz transform $\nabla
L^{-1/2}$ or in $G_L$), the operators are bounded in the same range
where $\{\sqrt{t}\,\nabla e^{-t\,L}\}_{t>0}$ is uniformly bounded
and/or satisfies  off-diagonal estimates.

In the weighted situation, given a weight $w\in A_\infty$, one studies the previous properties for the semigroup and its gradient. Now the underlying measure is no longer $dx$ but $dw(x)=w(x)\,dx$ which is a doubling measure. Therefore, we need an appropriate definition of off-diagonal estimates in spaces of homogeneous type with the following properties: it implies uniform $L^p(w)$ boundedness, it is stable under composition, it passes from unweighted to weighted estimates and it is handy in practice. In \cite{AM2} we propose a definition only involving balls and annuli. Such definition  makes clear that there are two parameters involved,  the radius of balls  and the  parameter of the family,  linked by a scaling rule independently on the  location of the balls. The price to pay for  stability is a somewhat weak definition (in the sense that we can not be greedy in our demands). Nevertheless, it covers examples of the literature on semigroups.  Furthermore,   in spaces of homogeneous type with polynomial volume growth (that is, the measure of a ball is comparable to  a power of its radius, uniformly over centers and radii) it  coincides with some other possible definitions.  This is also the case for more general volume growth conditions, such as the one for some Lie groups with a local dimension and a dimension at infinity. Eventually,  it is  operational for proving weighted estimates  in \cite{AM3}, which was the main motivation for developing that material.

Once  it is shown in \cite{AM2} that there exist ranges where the semigroup and its gradient are uniformly bounded and/or satisfy off-diagonal estimates with respect to the weighted measure $dw(x)=w(x)\,dx$, we study the weighted estimates of the operators associates with $L$. As in the unweighted situation considered in \cite{Aus}, the ranges where the operators are bounded are ruled by either the semigroup or its gradient. To do that, one needs to apply  two criteria in a
setting  with underlying measure $dw$. Thus, we need versions of those results valid in $\re^n$ with the Euclidean distance and the measure $dw$, or more  generally, in spaces of homogeneous type (when $w\in A_\infty$ then $dw$ is doubling).

This article is a review on the subject with no proofs expect for the section dealing with Calder\'{o}n-Zygmund operators. The plan is as follows. In Section \ref{section:prelim} we give some preliminaries regarding doubling measures and  Muckenhoupt weights. In Section \ref{section:CZ} we present the two main results that generalize the Calder\'{o}n-Zygmund theory. The easy application to classical Calder\'{o}n-Zygmund operators is given with proofs. We devote Section \ref{section:off} to discuss two notions of off-diagonal estimates: one that holds for arbitrary closed sets, and another one, which is more natural in the weighted case, involving only balls and annuli. In Section \ref{sec:elliptic} we introduce the class of elliptic operators and present their off-diagonal properties. Unweighted and weighted estimates for the functional calculus, Riesz transforms and square functions associated such elliptic operators are in Section  \ref{section:applications}. The strategy to prove these results
is explained in Section \ref{section:proofs}. Finally in Section \ref{section:further} we present some further applications concerning commutators with $\BMO$ functions, reverse inequalities for square roots and also vector-valued estimates. We also give some weighted estimates for fractional operators  (see \cite{AM5}) and Riesz transforms on manifolds (see \cite{AM4}).

\section{Preliminaries}\label{section:prelim}

Given a ball $B\subset \re^n$ with radius $r(B)$ and $\lambda>0$, $\lambda\, B$ denotes the concentric ball with radius $r(\lambda \, B)= \lambda\, r(B)$.

The underlying space is the Euclidean setting  $\re^n$ equipped with the Lebesgue measure or more in general with a doubling measure $\mu$. Let us recall that $\mu$ is doubling if
$$
\mu(2\,B)\le C\,\mu(B)<\infty
$$
for every ball $B$. By iterating this expression,
there exists $D$, which is called the doubling order of $\mu$, so that $\mu(\lambda\,B)\le C_\mu\,\lambda^D\,\mu(B)$ for every $\lambda\ge 1$ and every ball $B$.

Given a ball $B$, we write $C_{j}(B)=2^{j+1}\, B\setminus 2^j\, B$
when $j\ge 2$, and $C_{1}(B)=4B$. Also we set
$$
\aver{B} h\,d\mu
=
\frac1{\mu(B)}\,\int_B h(x)\,d\mu(x),
\qquad
\aver{C_j(B)} h\,d\mu
=
\frac1{\mu(2^{j+1}B)}\,\int_{C_{j}(B)} h\,d\mu.
$$

Let  us introduce some classical classes of weights. Let $w$ be a
weight (that is, a non negative locally integrable function) on
$\RR^n$. We say that $w\in A_p$, $1<p<\infty$, if there exists a
constant $C$ such that for every ball $B\subset\re^n$,
$$
\Big(\aver{B} w\,dx\Big)\,
\Big(\aver{B} w^{1-p'}\,dx\Big)^{p-1}\le C.
$$
For $p=1$, we say that $w\in A_1$ if there is a constant $C$ such
that for every ball $B\subset \re^n$,
$$
\aver{B} w\,dx
\le
C\, w(y),
\qquad \mbox{for a.e. }y\in B.
$$
The reverse H\"{o}lder classes are defined in the following way: $w\in
RH_{q}$, $1< q<\infty$, if there is a constant $C$ such that  for any
ball $B$,
$$
\Big(\aver{B} w^q\,dx\Big)^{\frac1q}
\le C\, \aver{B} w\,dx.
$$
The endpoint $q=\infty$ is given by the condition $w\in RH_{\infty}$
whenever there is a constant $C$ such that for any ball $B$,
$$
w(y)\le C\, \aver{B} w\,dx,
\qquad \mbox{for a.e. }y\in B.
$$

The following facts are  well-known (see for instance \cite{GR, Gra}).
\begin{prop}\label{prop:weights}\

\begin{list}{$(\theenumi)$}{\usecounter{enumi}\leftmargin=1cm
\labelwidth=0.7cm\itemsep=0.15cm\topsep=.1cm
\renewcommand{\theenumi}{\roman{enumi}}
}

\item $A_1\subset A_p\subset A_q$ for $1\le p\le q<\infty$.

\item $RH_{\infty}\subset RH_q\subset RH_p$ for $1<p\le q\le \infty$.

\item If $w\in A_p$, $1<p<\infty$, then there exists $1<q<p$ such
that $w\in A_q$.

\item If $w\in RH_q$, $1<q<\infty$, then there exists $q<p<\infty$ such
that $w\in RH_p$.

\item $\displaystyle A_\infty=\bigcup_{1\le p<\infty} A_p=\bigcup_{1<q\le
\infty} RH_q $.

\item If $1<p<\infty$, $w\in A_p$ if and only if $w^{1-p'}\in
A_{p'}$.

\item If $w\in A_{\infty}$, then the measure $dw(x)=w(x)\, dx$ is a Borel doubling measure.

\end{list}
\end{prop}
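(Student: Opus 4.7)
The plan is to dispatch the seven items by grouping them according to technique, since each is a classical fact about Muckenhoupt weights.

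First I would handle (i), (ii), and (vi), which reduce to Hölder's inequality and algebra. For $A_p\subset A_q$ with $p\le q$, Jensen's inequality with exponent $(p'-1)/(q'-1)\ge 1$ bounds $\aver{B} w^{1-q'}\,dx$ by $(\aver{B} w^{1-p'}\,dx)^{(q'-1)/(p'-1)}$; raising to $q-1$ and using the identity $(q-1)(q'-1)=(p-1)(p'-1)=1$ converts the $A_p$ constant into an $A_q$ constant. The inclusion $A_1\subset A_p$ follows from the pointwise bound $w(y)^{-1}\le C\,(\aver{B} w\,dx)^{-1}$ a.e.\ on $B$ which, after raising to $1/(p-1)$ and averaging, yields the $A_p$ condition. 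The reverse Hölder inclusions in (ii) are again Jensen's inequality, while $RH_\infty\subset RH_q$ is immediate from the pointwise bound. For (vi) I use $(p')'=p$ and $(p-1)(p'-1)=1$ to obtain $(w^{1-p'})^{1-p}=w$, so the $A_{p'}$ condition on $w^{1-p'}$ is exactly the $A_p$ condition on $w$ raised to the power $p-1$.

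Items (iii) and (iv) are the self-improvement (``open-ended'') properties. For (iv) I would invoke Gehring's lemma, whose proof uses a Calderón-Zygmund stopping-time decomposition of level sets of $w$ together with a good-$\lambda$ argument. For (iii) the crucial input is the reverse Hölder inequality for $A_p$ weights: every $w\in A_p$ lies in some $RH_{1+\delta}$. Once this is in hand, a short calculation from the $A_p$ definition (applied to $w^{1+\delta}$ and then redistributing the exponents) yields $w\in A_{p-\epsilon}$ for some $\epsilon>0$. I expect the main obstacle to lie precisely here, since establishing the reverse Hölder inequality requires a delicate stopping-time argument on level sets of the Hardy-Littlewood maximal function against the measure $w\,dx$, and this is the substantive step underlying the whole self-improvement theory.

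Finally, (v) and (vii) follow from what precedes. One can take $A_\infty=\bigcup_{p\ge 1} A_p$ as a definition; its equality with $\bigcup_{q>1} RH_q$ is a consequence of the reverse Hölder inequality (every $A_\infty$ weight lies in some $RH_q$) together with the converse, which rearranges an $RH_q$ bound together with (ii) into an $A_p$ condition for $p$ large. For (vii), given $w\in A_p$, applying the $A_p$ condition on the ball $2B$ together with Hölder's inequality on the pair of functions $\chi_B\cdot w^{1/p}$ and $\chi_B\cdot w^{-1/p}$ yields $|B|/|2B|\lesssim (w(B)/w(2B))^{1/p}$, which rearranges to $w(2B)\lesssim w(B)$ with a constant depending only on the $A_p$ constant and the doubling of Lebesgue measure, showing that $dw$ is a (Borel) doubling measure.
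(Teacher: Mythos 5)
The paper does not actually prove this proposition; it is stated as a collection of ``well-known'' facts with a pointer to \cite{GR} and \cite{Gra}, so there is no proof in the paper to compare against. Your sketch is essentially the standard textbook argument found in those references: Jensen/H\"{o}lder and the identity $(p-1)(p'-1)=1$ for (i), (ii), (vi); the reverse H\"{o}lder/Gehring self-improvement machinery for (iii), (iv), and the two identities in (v); and the H\"{o}lder split of $\chi_B = \chi_B\,w^{1/p}\,w^{-1/p}$ on $2B$ for (vii), which correctly yields $w(2B)\lesssim w(B)$. All of these are sound.

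One small imprecision worth flagging: in (iii) you say the self-improvement comes from ``the $A_p$ definition (applied to $w^{1+\delta}$ \ldots)''. The clean route is to apply the reverse H\"{o}lder inequality not to $w$ but to the dual weight $\sigma = w^{1-p'}$, which lies in $A_{p'}$ by (vi) and hence in some $RH_{1+\delta}$; choosing $q$ with $q'-1 = (p'-1)(1+\delta)$ gives $q<p$ and the $A_q$ bound follows from $A_p$ by plugging in the reverse H\"{o}lder estimate for $\sigma$. Applying reverse H\"{o}lder to $w$ itself does not directly lower the $A_p$ exponent. This does not affect the overall correctness of the proposal, which otherwise matches the classical development in the cited sources.
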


\bigskip

Given $1\le p_0<q_0\le \infty$ and $w\in A_\infty$  we define the set
$$
\W_w(p_0,q_0)
=
\big\{
p: p_0<p<q_0, w\in A_{\frac{p}{p_0}}\cap
RH_{\left(\frac{q_0}{p}\right)'}
\big\}.
$$
If $w=1$, then $\W_{1}(p_0,q_0)= (p_0,q_0)$. As it is shown in \cite{AM1}, if not empty, we have
$$
\W_w(p_0,q_0)=\Big(p_0\,r_w,
\frac{q_0}{(s_w)'}\Big)
$$
where
$
r_w=\inf\{r\ge 1\, : \, w\in A_r\}$ and $
s_w=\sup\{s>1\, : \, w\in RH_s\}
$.

If the Lebesgue measure is replaced by a Borel doubling measure
$\mu$, all the above properties remain valid with the notation change \cite{ST}. A particular case is the doubling measure $dw(x)=w(x)\,dx$ with $w\in A_\infty$.

\section{Generalized Calder\'{o}n-Zygmund theory}\label{section:CZ}

As mentioned before, we have two criteria that allow us to derive the unweighted and weighted estimates. These generalize the classical Calder\'{o}n-Zygmund theory and we would like to emphasize that the conditions imposed involve the operator and its action on some functions but not its kernel.

The following result appears in \cite{BK1} in a slightly more complicated way with extra hypotheses. See \cite{Aus} and \cite{AM1} for stronger forms and more references. We notice that this result is applied to go below a given $q_0$, where it is assumed that the operator in question is \textit{a priori} bounded on $L^{q_0}(\mu)$. The proof is based on the Calder\'{o}n-Zygmund decomposition.

\begin{theor}\label{theor:small}
Let $\mu$ be a doubling Borel measure on $\re^n$,  $D$ its doubling order
and $1\le p_0< q_0\le \infty$. Suppose that $T$ is a sublinear
operator bounded on $L^{q_0}(\mu)$ and that $\{\A_B\}$ is a
family indexed by balls of linear operators acting from $L^\infty_{c}(\mu)$ into $L^{q_0}(\mu)$.  Assume that
\begin{equation}\label{small:T:I-A:variant}
\frac {1}{\mu(4\,B)}\,\int_{\re^n \setminus 4B} |T(I-\A_{B})f|\,d\mu
\lesssim
\Big(\aver{B} |f|^{p_0}\,d\mu\Big)^{\frac1{p_0}},
\end{equation}
and, for $j\ge 1$,
\begin{equation}\label{small:A}
\Big(
\aver{C_j(B)} |\A_{B}f|^{q_0}\,d\mu\Big)^{\frac1{q_0}}
\le
\alpha_j\,\Big(\aver{B} |f|^{p_0}\,d\mu\Big)^{\frac1{p_0}},
\end{equation}
for all ball $B$ and for all $f\in L^\infty_{c}(\mu)$ with $\supp f\subset B$. If $\sum_j \alpha_j\,2^{D\,j}<\infty$, then $T$ is of weak type
$(p_0,p_0)$, hence $T$ is of strong type $(p,p)$  for all
$p_0<p<q_0$. More precisely,  there exists a constant $C$ such that for all $f \in L^\infty_{c}(\mu)$
$$
\|Tf\|_{L^p(\mu)} \le C\, \| f\|_{L^p(\mu)}.
$$
\end{theor}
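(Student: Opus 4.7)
The strategy is the Calder\'on-Zygmund decomposition machinery adapted to the ``kernel-free'' setting via the splitting $Tb_i=T\A_{B_i}b_i+T(I-\A_{B_i})b_i$ on each bad ball. I establish the weak-type $(p_0,p_0)$ estimate for $T$ first; the strong-type $(p,p)$ bounds for $p_0<p<q_0$ then follow from Marcinkiewicz interpolation with the hypothesized $L^{q_0}(\mu)$ boundedness. Fix $\lambda>0$ and $f\in L^\infty_c(\mu)$, and apply the Calder\'on-Zygmund decomposition to $|f|^{p_0}$ at height $\lambda^{p_0}$ (valid in any doubling space) to write $f=g+\sum_ib_i$ with $\supp b_i\subset B_i$, bounded overlap of the $B_i$, $(\aver{B_i}|b_i|^{p_0}\,d\mu)^{1/p_0}\lesssim\lambda$, $\sum_i\mu(B_i)\lesssim\lambda^{-p_0}\|f\|_{L^{p_0}(\mu)}^{p_0}$, $\|g\|_{L^\infty(\mu)}\lesssim\lambda$ and $\|g\|_{L^{p_0}(\mu)}\le\|f\|_{L^{p_0}(\mu)}$. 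Then split
\[
Tf=Tg+\sum_iT(I-\A_{B_i})b_i+\sum_iT\A_{B_i}b_i
\]
and bound $\mu(\{|Tf|>\lambda\})$ piece by piece at thresholds that are constant multiples of $\lambda$.

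For the $Tg$ piece, the $L^{q_0}(\mu)$-boundedness of $T$ together with $\|g\|_{L^{q_0}(\mu)}^{q_0}\le\|g\|_\infty^{q_0-p_0}\|g\|_{L^{p_0}(\mu)}^{p_0}\lesssim\lambda^{q_0-p_0}\|f\|_{L^{p_0}(\mu)}^{p_0}$ and Chebyshev give the weak-type control of the correct form (a direct $L^\infty$ bound handles $q_0=\infty$). For $\sum_iT(I-\A_{B_i})b_i$, set $E=\bigcup_i 4B_i$, so $\mu(E)\lesssim\lambda^{-p_0}\|f\|_{L^{p_0}(\mu)}^{p_0}$. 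Outside $E$, hypothesis \eqref{small:T:I-A:variant} applied ball-by-ball to $b_i$, combined with $(\aver{B_i}|b_i|^{p_0})^{1/p_0}\lesssim\lambda$, yields $\int_{\re^n\setminus 4B_i}|T(I-\A_{B_i})b_i|\,d\mu\lesssim\lambda\,\mu(B_i)$, which summed in $i$ and fed into Chebyshev closes this piece.

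The delicate piece is $\sum_iT\A_{B_i}b_i$. The $L^{q_0}(\mu)$-boundedness of $T$ and Chebyshev reduce matters to proving $\|\sum_i\A_{B_i}b_i\|_{L^{q_0}(\mu)}^{q_0}\lesssim\lambda^{q_0-p_0}\|f\|_{L^{p_0}(\mu)}^{p_0}$. By duality, it suffices to bound $\sum_i\int|\A_{B_i}b_i||\phi|\,d\mu$ uniformly over $\phi$ with $\|\phi\|_{L^{q_0'}(\mu)}\le1$. I split each integral into annular pieces on $C_j(B_i)$, apply H\"older, and use \eqref{small:A} together with $(\aver{B_i}|b_i|^{p_0})^{1/p_0}\lesssim\lambda$ and the doubling bound $\mu(2^{j+1}B_i)\lesssim 2^{jD}\mu(B_i)$. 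Dominating the average of $|\phi|^{q_0'}$ over $2^{j+1}B_i$ pointwise on $B_i$ by the Hardy--Littlewood maximal function $M_\mu(|\phi|^{q_0'})$ converts each annular integral into a multiple of $\alpha_j\,2^{jD}\lambda\int_{B_i}M_\mu(|\phi|^{q_0'})^{1/q_0'}\,d\mu$. The $j$-sum is absorbed by $\sum_j\alpha_j\,2^{jD}<\infty$; bounded overlap of the $B_i$ collapses the $i$-sum into an integral over $\bigcup_iB_i$; and Kolmogorov's inequality (legitimate because $1/q_0'<1$ when $q_0<\infty$) bounds the resulting quantity by $\mu(\bigcup_iB_i)^{1/q_0}\lesssim(\lambda^{-p_0}\|f\|_{L^{p_0}(\mu)}^{p_0})^{1/q_0}$, giving the desired estimate.

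The main obstacle is precisely this $L^{q_0}$ bound for $\sum_i\A_{B_i}b_i$: one must synchronize the off-diagonal decay $\alpha_j$ from \eqref{small:A} with the doubling growth $2^{jD}$ of the dilated balls and the combinatorics of the CZ family, all inside a duality argument where the natural maximal inequality is only summable in the Kolmogorov--Marcinkiewicz sense because $|\phi|^{q_0'}$ only sits in $L^1(\mu)$. This is exactly where the hypothesis $\sum_j\alpha_j\,2^{jD}<\infty$ is consumed. The endpoint $q_0=\infty$ requires a small variant using pointwise $L^\infty$ estimates from \eqref{small:A} in place of Kolmogorov.
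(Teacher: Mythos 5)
Your proposal is correct and follows the same Calder\'on--Zygmund-decomposition route that the paper indicates and that is carried out in the references it cites (\cite{BK1}, \cite{Aus}, \cite{AM1}); the paper itself does not reproduce the proof. You have the right three-way split, the right treatment of the good part and the $(I-\A_{B_i})b_i$ piece via the off-diagonal hypothesis, and the duality/maximal-function/Kolmogorov argument for $\sum_i\A_{B_i}b_i$ that absorbs the hypothesis $\sum_j\alpha_j\,2^{Dj}<\infty$ --- only note that, since $T$ is merely sublinear, the displayed equality $Tf=Tg+\sum_iT(I-\A_{B_i})b_i+\sum_iT\A_{B_i}b_i$ should be replaced by the pointwise inequality $|Tf|\le|Tg|+\sum_i|T(I-\A_{B_i})b_i|+|T\bigl(\sum_i\A_{B_i}b_i\bigr)|$, which is what your subsequent estimates actually use.
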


A stronger form of \eqref{small:T:I-A:variant}
 is with the notation above,
 \begin{equation}\label{small:T:I-A}
\Big( \aver{C_j(B)} |T(I-\A_{B})f|^{p_0}\,d\mu\Big)^{\frac1{p_0}}
\le
\alpha_j\,\Big(\aver{B} |f|^{p_0}\,d\mu\Big)^{\frac1{p_0}},
\qquad j\ge 2.
\end{equation}

Our second result is based on a good-$\lambda$ inequality. See  \cite{ACDH}, \cite{Aus} (in  the unweighted case) and \cite{AM1} for more general formulations. In contrast with Theorem \ref{theor:small}, we do not assume any \textit{a priori} estimate for $T$. However, in practice, to deal with the local term (where $f$ is restricted to $4\,B$) in \eqref{big:T:I-A}, one uses that the operator is bounded on $L^{p_0}(\mu)$. Thus, we apply this result to go above $p_0$ in the unweighted case and also to show weighted estimates.

\begin{theor} \label{theor:big}
Let $\mu$ be a doubling Borel measure on $\re^n$ and $1\le p_0<q_0\le \infty$. Let $T$ be a sublinear
operator acting on $L^{p_0}(\mu)$ and let  $\{\A_B\}$ be a
family indexed by balls of operators acting from $L^\infty_c(\mu)$ into $L^{p_{0}}(\mu)$.  Assume that
\begin{equation}\label{big:T:I-A}
\Big(\aver{B}
|T(I-\A_{B})f|^{p_0}\,d\mu\Big)^{\frac1{p_0}}
\le
\sum_{j\ge 1} \alpha_j\,
\Big( \aver{2^{j+1}\,B}
|f|^{p_0}\,d\mu\Big)^{\frac1{p_0}},
\end{equation}
and
\begin{equation}\label{big:T:A}
\Big(\aver{B} |T\A_{B}f|^{q_0}\,d\mu\Big)^{\frac1{q_0}}
\le
\sum_{j\ge 1} \alpha_j\,
\Big( \aver{2^{j+1}\,B}
|T f|^{p_0}\,d\mu\Big)^{\frac1{p_0}},
\end{equation}
for all $f\in L^\infty_c(\mu)$, all $B$ and for some $\alpha_j$ satisfying  $\sum_j \alpha_j<\infty$.
\begin{list}{$(\theenumi)$}{\usecounter{enumi}\leftmargin=1cm
\labelwidth=0.7cm\itemsep=0.15cm\topsep=.1cm
\renewcommand{\theenumi}{\alph{enumi}}}

\item If $p_0<p<q_0$, there exists a constant $C$ such that for all $f \in L^\infty_{c}(\mu)$,
$$
\|Tf\|_{L^p(\mu)} \le C\, \| f\|_{L^p(\mu)}.
$$

\item Let $p\in \W_w(p_0,q_0)$, that is,
$p_0<p<q_0$ and $w\in A_{\frac{p}{p_0}}\cap
RH_{\left(\frac{q_0}{p}\right)'}$. There is a  constant $C$ such that
for all  $f\in  L^\infty_c(\mu)$,
\begin{equation}\label{est:main-th} \|T f\|_{L^p(w)}
\le
C\, \|f\|_{L^p(w)}.
\end{equation}
\end{list}

\end{theor}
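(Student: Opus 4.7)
Both parts of the theorem will follow from a Fefferman--Stein style good-$\lambda$ inequality relating the $p_0$-Hardy--Littlewood maximal function of $Tf$ to that of $f$. Throughout, let $M$ be the uncentered Hardy--Littlewood maximal operator with respect to $\mu$ and set $M_r h := M(|h|^r)^{1/r}$. The first step is to rephrase the hypotheses: since $\sum_j \alpha_j < \infty$ and, for every $x \in B$ and every $j\ge 1$, doubling of $\mu$ yields $(\aver{2^{j+1}B}|h|^{p_0}\,d\mu)^{1/p_0} \le C_D\,M_{p_0}h(x)$ uniformly in $j$, hypotheses \eqref{big:T:I-A} and \eqref{big:T:A} become, for every ball $B$ and every $x \in B$,
\[
\Bigl(\aver{B}|T(I-\A_B)f|^{p_0}\,d\mu\Bigr)^{\frac{1}{p_0}} \lesssim M_{p_0}f(x),\qquad
\Bigl(\aver{B}|T\A_B f|^{q_0}\,d\mu\Bigr)^{\frac{1}{q_0}} \lesssim M_{p_0}(Tf)(x).
\]

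\textbf{Good-$\lambda$ inequality.} The central claim is that there exists $C_0>0$ such that for every $K$ large, every $\gamma \in (0,1)$, every $\lambda>0$ and every $f\in L^\infty_c(\mu)$,
\[
\mu\bigl(\{M_{p_0}(Tf)>K\lambda,\ M_{p_0}f\le \gamma\lambda\}\bigr) \le C_0\bigl(K^{-q_0}+\gamma^{p_0}\bigr)\mu\bigl(\{M_{p_0}(Tf)>\lambda\}\bigr),
\]
together with its analogue $\mu\mapsto w\,d\mu$ needed in (b). To prove it, Whitney-decompose $\Omega_\lambda := \{M_{p_0}(Tf)>\lambda\}$ into disjoint balls $\{B_i\}$ whose fixed dilations $B_i^*$ still meet $\re^n\setminus\Omega_\lambda$. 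Fix $i$; assuming $B_i\cap\{M_{p_0}f\le \gamma\lambda\}$ contains a point $y_i$ (else there is nothing to estimate) and picking $z_i\in B_i^*\setminus\Omega_\lambda$ so that $M_{p_0}(Tf)(z_i)\le\lambda$, split $Tf = T(I-\A_{B_i^*})f+T\A_{B_i^*}f$ on $B_i$. The first reformulated bound at $y_i$ gives $\aver{B_i^*}|T(I-\A_{B_i^*})f|^{p_0}\,d\mu \lesssim (\gamma\lambda)^{p_0}$, so Chebyshev contributes $\lesssim (\gamma/K)^{p_0}\mu(B_i)$ to the bad set. The second bound at $z_i$ gives $\aver{B_i^*}|T\A_{B_i^*}f|^{q_0}\,d\mu \lesssim \lambda^{q_0}$, and Chebyshev with exponent $q_0$ contributes $\lesssim K^{-q_0}\mu(B_i)$. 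A minor technicality is passing from bounds on $|Tf|$ pointwise on $B_i$ to bounds on $M_{p_0}(Tf)$ on $B_i$; this is handled by decomposing $Tf$ into its restriction to a fixed dilate of $B_i^*$ and its complement, the latter contribution being essentially constant on $B_i$ and controlled directly by $M_{p_0}(Tf)(z_i)$. Summing in $i$ and using $\sum_i\mu(B_i)\lesssim\mu(\Omega_\lambda)$ completes the claim.

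\textbf{Conclusion by integration.} For (a), fix $p_0<p<q_0$ and $f\in L^\infty_c(\mu)$, so that $Tf\in L^{p_0}(\mu)$ by hypothesis and, after a routine truncation, $\|M_{p_0}(Tf)\|_{L^p(\mu)}$ is finite at intermediate stages. Multiplying the good-$\lambda$ inequality by $pK^p\lambda^{p-1}$ and integrating yields
\[
\|M_{p_0}(Tf)\|_{L^p(\mu)}^p \le C_0 K^p(K^{-q_0}+\gamma^{p_0})\,\|M_{p_0}(Tf)\|_{L^p(\mu)}^p + C_{K,\gamma}\,\|M_{p_0}f\|_{L^p(\mu)}^p.
\]
Since $p<q_0$ one can choose $K$ large and then $\gamma$ small so that $C_0 K^p(K^{-q_0}+\gamma^{p_0})<1/2$; absorbing and invoking the maximal theorem gives $\|Tf\|_{L^p(\mu)}\le\|M_{p_0}(Tf)\|_{L^p(\mu)}\lesssim\|M_{p_0}f\|_{L^p(\mu)}\lesssim\|f\|_{L^p(\mu)}$. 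For (b), the same scheme is run against $dw=w\,d\mu$: the reverse H\"older condition $w\in RH_{(q_0/p)'}$ converts $\mu(E)/\mu(B_i)$ into $w(E)/w(B_i)$ at the cost of a power $1/(q_0/p)'$, turning $K^{-q_0}$ into $K^{-q_0/(q_0/p)'}$, and since $q_0/(q_0/p)'>p$ the $\lambda^{p-1}$-integration still produces a geometric contraction in $K$; the $\gamma^{p_0}$ term is handled analogously using $w\in A_{p/p_0}$, which furthermore guarantees that $M_{p_0}$ is bounded on $L^p(w)$ and permits the concluding step $\|Tf\|_{L^p(w)}\lesssim\|M_{p_0}f\|_{L^p(w)}\lesssim\|f\|_{L^p(w)}$.

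\textbf{Main obstacle.} The hardest part is the weighted step: the exponents produced by $w\in A_{p/p_0}\cap RH_{(q_0/p)'}$ must precisely beat the $K^p$ factor and the $\gamma$-penalty coming from the $\lambda^{p-1}$-integration. This bookkeeping is exactly what singles out the range $p\in\W_w(p_0,q_0)=(p_0 r_w,q_0/(s_w)')$, and carrying it out cleanly (while preserving the sublinearity and only-$L^{p_0}$ a priori structure of $T$) is the core of the argument.
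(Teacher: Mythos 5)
Your approach is essentially the paper's: Theorem~\ref{theor:big} ``is based on a good-$\lambda$ inequality'', and the mechanism you describe is exactly the one used there and in the references \cite{ACDH}, \cite{AM1} --- Whitney decomposition of $\Omega_\lambda=\{M_{p_0}(Tf)>\lambda\}$, splitting $Tf=T(I-\A_{B_i^\ast})f+T\A_{B_i^\ast}f$ on each Whitney ball, Chebyshev with exponents $p_0$ and $q_0$ respectively, and then $\lambda^{p-1}$-integration. Your reformulation of \eqref{big:T:I-A}--\eqref{big:T:A} in terms of $M_{p_0}$, and the dilate trick to pass from pointwise bounds on $Tf$ to bounds on $M_{p_0}(Tf)$ on the Whitney balls, are correct.

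There is, however, a slip in the exponent bookkeeping for part~(b) that, as written, breaks the absorption. You claim that $w\in RH_{(q_0/p)'}$ turns $K^{-q_0}$ into $K^{-q_0/(q_0/p)'}$ and that $q_0/(q_0/p)'>p$. First, $q_0/(q_0/p)'=q_0-p$, which exceeds $p$ only when $p<q_0/2$, so the claimed inequality is false in general. Second, and more decisively, the conversion supplied by $RH_s$ carries exponent $1/s'$, not $1/s$: if $w\in RH_s$ and $E\subset B$, then $w(E)/w(B)\lesssim\bigl(\mu(E)/\mu(B)\bigr)^{1/s'}$, and with $s=(q_0/p)'$ one gets $1/s'=p/q_0$, so $(K^{-q_0})^{1/s'}=K^{-p}$ exactly; then $K^p\cdot K^{-p}=1$ is not $<1$, and there is no contraction. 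The repair is the self-improvement of Proposition~\ref{prop:weights}~(iv): the hypothesis $w\in RH_{(q_0/p)'}$ forces $w\in RH_s$ for some $s>(q_0/p)'$, whence $s'<q_0/p$, $q_0/s'>p$ strictly, and $K^p K^{-q_0/s'}\to 0$ as $K\to\infty$. (An analogous openness via Proposition~\ref{prop:weights}~(iii) can be invoked for the $\gamma$-term if you wish to run the Chebyshev directly against $dw$.) This openness is precisely what makes $\W_w(p_0,q_0)=\bigl(p_0 r_w, q_0/(s_w)'\bigr)$ an open interval, so your concluding remark that ``this bookkeeping singles out $\W_w(p_0,q_0)$'' is right --- it just needs the strict inequality supplied by self-improvement rather than the endpoint exponent you wrote down. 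With that correction the argument goes through.
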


An operator acting from $A$ to $B$ is just a map from $A$ to $B$.
Sublinearity means $\vert T(f+g) \vert \le \vert Tf\vert + \vert
Tg\vert$ and $\vert T(\lambda f)\vert = \vert \lambda\vert\, \vert
Tf\vert$ for all $f,g$ and $\lambda\in \re$ or $\co$. Next, $L^p(w)$ is the
space of complex valued functions in  $ L^p(dw)$  with  $dw=w\,d\mu$.
However, all this extends to   functions  valued in a Banach space and also to functions defined on a space of homogeneous type.

Let us notice that in both results, the cases $q_0=\infty$ are understood  in the sense that the $L^{q_0}$-averages are indeed essential suprema. One can weaken \eqref{big:T:A} by adding to the right hand side   error terms such as $M(|f|^{p_0})(x)^{1/p_0}$ for any $x\in B$ (see \cite[Theorem 3.13]{AM1}).

\subsection{Application to Calder\'{o}n-Zygmund operators}

We see that the previous results allow us to reprove the unweighted and weighted estimates of classical Calder\'{o}n-Zygmund operators. We emphasize that the conditions imposed do not involve the kernels of the operators.

\begin{corol}\label{corol:CZO}
Let $T$ be a sublinear operator bounded on $L^2(\re^n)$.
\begin{list}{$(\theenumi)$}{\usecounter{enumi}\leftmargin=1cm
\labelwidth=0.7cm\itemsep=0.15cm\topsep=.1cm
\renewcommand{\theenumi}{\roman{enumi}}
}

\item Assume that, for any ball $B$ and any bounded function $f$ supported on $B$ with mean $0$, we have
\begin{equation}\label{eq:corol:CZO1}
\int_{\RR^n \setminus 4\,B} |Tf(x)|\, dx
\le C \int_B |f(x)|\, dx.
\end{equation}
Then, $T$ is of weak-type $(1,1)$ and consequently bounded on $L^p(\re^n)$ for every $1<p<2$.

\item Assume that, for any ball $B$ and any bounded function $f$ supported on $\re^n\setminus 4\,B$, we have
\begin{equation}\label{eq:corol:CZO2}
\sup_{x\in B}   |Tf(x)|
\le
C \aver{2\,B} |Tf(x)|\, dx +  C\inf_{x\in B} Mf(x).
\end{equation}
Then, $T$ is bounded on $L^p(\re^n)$ for every $2<p<\infty$.

\item  If $T$ satisfies \eqref{eq:corol:CZO1} and \eqref{eq:corol:CZO2} then,  $T$ is bounded on $L^p(w)$, for every $1<p<\infty$ and $w\in A_p$.

\end{list}

\end{corol}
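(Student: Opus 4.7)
My plan is to derive each of the three conclusions by selecting a suitable family $\{\A_B\}$ of auxiliary operators and then invoking the abstract criteria: Theorem \ref{theor:small} for part (i) and Theorem \ref{theor:big}(a),(b) for parts (ii) and (iii).

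For part (i), I would take $p_0=1$, $q_0=2$, and $\A_B f = \bigl(\aver{B}f\,dx\bigr)\chi_B$, so that $(I-\A_B)f$ is supported in $B$ with mean zero whenever $f$ is. The Hörmander-type hypothesis \eqref{eq:corol:CZO1} applied to $(I-\A_B)f$ then gives \eqref{small:T:I-A:variant} at once, since $\|(I-\A_B)f\|_{L^1(B)}\le 2\|f\|_{L^1(B)}$ and $|4B|\sim|B|$. Condition \eqref{small:A} is nearly trivial because $\A_B f$ is supported in $B$: it vanishes on $C_j(B)$ for $j\ge 2$, while the $j=1$ contribution is dominated by $\aver{B}|f|\,dx$. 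Theorem \ref{theor:small} then yields weak type $(1,1)$ and hence boundedness on $L^p$ for $1<p<2$.

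For part (ii), I would take $\A_B f = f\chi_{\re^n\setminus 4B}$, $p_0=2$, $q_0=\infty$, and apply Theorem \ref{theor:big}(a). Condition \eqref{big:T:I-A} reduces to the $L^2$ boundedness of $T$ applied to the local piece $f\chi_{4B}$, producing only a $j=1$ term. For \eqref{big:T:A} in its essential-supremum form, apply \eqref{eq:corol:CZO2} to $f\chi_{\re^n\setminus 4B}$, giving $\sup_B|T\A_B f|\le C\aver{2B}|T\A_B f|\,dx+C\inf_B Mf$. Splitting $f=f\chi_{4B}+f\chi_{\re^n\setminus 4B}$ in the first average produces the desired $\aver{2B}|Tf|\,dx$ (the $j=1$ term) plus a local remainder controlled via $L^2$ boundedness by $(\aver{4B}|f|^2)^{1/2}$; this, together with $\inf_B Mf$, is absorbed as an error term of the form $M(|f|^2)(x)^{1/2}$, $x\in B$, as allowed by the remark following Theorem \ref{theor:big}.

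For part (iii), I would keep the same $\A_B$ and $q_0=\infty$ but tune $p_0$ to the weight. Given $w\in A_p$ with $1<p<\infty$, Proposition \ref{prop:weights}(iii) supplies $r_w<p$, so $\W_w(p_0,\infty)=(p_0\,r_w,\infty)$ contains $p$ whenever $1<p_0<p/r_w$. Parts (i) and (ii) ensure $T$ is bounded on $L^{p_0}$ for any such $p_0$, and the verification of \eqref{big:T:I-A} and the error-term version of \eqref{big:T:A} proceeds exactly as in part (ii) with exponent $p_0$ in place of $2$. Theorem \ref{theor:big}(b) then delivers the weighted bound. The main subtlety I expect lies precisely here: one must pick $p_0$ simultaneously in the unweighted boundedness range of $T$ and strictly below $p/r_w$, and this is exactly what the openness of $A_p$ provides.
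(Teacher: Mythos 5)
Your proposal is correct and follows essentially the same route as the paper: the same choices $\A_B f=\bigl(\textaver{B}f\,dx\bigr)\chi_B$ for part (i), $\A_B f=f\chi_{\re^n\setminus 4B}$ for parts (ii)--(iii), the same pairings $(p_0,q_0)=(1,2)$ and $(p_0,q_0)$ with $q_0=\infty$, the same use of the error-term remark after Theorem~\ref{theor:big}, and openness of $A_p$ to select $p_0$ (the paper phrases this as picking $r$ close to $1$ so that part~(i) gives $L^r$ boundedness, while you note the full unweighted range is available; the two are interchangeable).
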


\begin{proof}[Proof of $(i)$] We are going to use Theorem \ref{theor:small} with $p_0=1$ and $q_0=2$. By assumption $T$ is bounded on $L^2(\re^n)$. For every ball  $B$ we set $\A_B f(x)=\big(\textaver{B} f\,dx\big)\,\bigchi_B(x)$.  Then, as a consequence of \eqref{eq:corol:CZO1} we obtain \eqref{small:T:I-A:variant}  with $p_0=1$:
$$
\frac {1}{|4\,B|}\,\int_{\re^n \setminus 4\,B} |T(I-\A_{B})f|\,dx
\lesssim
\aver{B} |(I-\A_{B})f|\,dx
\lesssim
\aver{B} |f|\,dx.
$$
On the other hand, we observe that $\A_{B} f(x)\equiv 0$ for $x\in
C_j(B)$ and $j\ge 2$, and for $x\in C_1(B)=4\,B$ we have $|\A_{B}
f(x)|\le \textaver{B}|f|\,dx$. This shows \eqref{small:A} with
$p_0=1$ and $q_0=2$.\footnote[2]{In fact, we have \eqref{small:A}
with $p_0=1$ and $q_0=\infty$. We take $q_0=2$ since in Theorem
\ref{theor:small} we need $T$ bounded on $L^{q_0}(\re^n)$. This shows
that one can assume boundedness on $L^r(\re^n)$ for some $1<r<\infty$
(in place of $L^2(\re^n)$), and the argument goes through.}
Therefore, Theorem \ref{theor:small}
yields that $T$ is of weak-type $(1,1)$.

By Marcinkiewicz interpolation theorem, it follows that $T$ is
bounded on $L^p(\re^n)$ for $1<p<2$.
\end{proof}

\begin{proof}[Proof of $(ii)$]
We use $(a)$ of Theorem \ref{theor:big} with $p_0=2$ and $q_0=\infty$. For every ball $B$ we set $\A_B f(x)=\bigchi_{\re^n\setminus 4\,B}(x)\,f(x)$. Using that $T$ is bounded on $L^2(\re^n)$ we trivially  obtain
\begin{equation}\label{eqn:aux-CZO}
\Big(\aver{2\,B}
|T(I-\A_{B})f|^{2}\,dx\Big)^{\frac1{2}}
\lesssim
\Big( \aver{4\,B}
|f|^{2}\,dx\Big)^{\frac1{2}},
\end{equation}
which implies \eqref{big:T:I-A} with $p_0=2$.
On the other hand, \eqref{eq:corol:CZO2} and \eqref{eqn:aux-CZO} yield
\begin{align*}
\|T\A_B f\|_{L^\infty(B)}
&\lesssim
\aver{2\,B} |T\A_B\,f(x)|\, dx +  \inf_{x\in B} M(\A_B f)(x)
\\
&\le
\aver{2\,B} |T(I-\A_B)\,f(x)|\, dx+ \aver{2\,B} |T f(x)|\,dx +\inf_{x\in B} Mf(x)
\\
&
\lesssim
\Big( \aver{4\,B}
|f|^{2}\,dx\Big)^{\frac1{2}}+
\aver{2\,B} |T f(x)|\,dx+
\inf_{x\in B} Mf(x)
\\
&\le
\aver{2\,B} |T f(x)|\,dx+
\inf_{x\in B} M(|f|^2)(x)^\frac12.
\end{align*}
Thus we have obtained \eqref{big:T:A}, plus an error term $\inf_{x\in
B} M(|f|^2)(x)^\frac12$, with $q_0=\infty$ and $p_0=2$. Applying part
$(a)$ of Theorem \ref{theor:big} with the remark that follows it,
we conclude  that $T$ is bounded on $L^p(\re^n)$ for every
$2<p<\infty$.\footnote[3]{As before, if one \textit{a priori} assumes
boundedness on $L^r(\re^n)$ for some $1<r<\infty$ (in place of
$L^2(\re^n)$) the same computations hold with $r$ replacing $2$, see
the proof of $(iii)$.}

Let us observe that part $(b)$ in Theorem \ref{theor:big} yields weighted estimates: $T$ is bounded on $L^p(w)$ for every $2<p<\infty$ and $w\in A_{p/2}$.
\end{proof}

\begin{proof}[Proof of $(iii)$]
Note that $(ii)$ already gives weighted estimates. Here we improve this by assuming $(i)$, that is, by using that $T$ is bounded on $L^q(\re^n)$ for $1<q<2$.

Fixed $1<p<\infty$ and $w\in A_p$, there exists $1<r<p$ such that $w\in A_{p/r}$. Then, by $(i)$ (as we can take $r$ very close to $1$) we have that $T$ is bounded on $L^r(\re^n)$. Then, as in \eqref{eqn:aux-CZO}, we have
$$
\Big(\aver{2\,B}
|T(I-\A_{B})f|^{r}\,dx\Big)^{\frac1{r}}
\lesssim
\Big( \aver{4\,B}
|f|^{r}\,dx\Big)^{\frac1{r}}.
$$
This estimate allows us to obtain as before
$$
\|T\A_B f\|_{L^\infty(B)}
\lesssim
\aver{2\,B} |T f(x)|\,dx+
\inf_{x\in B} M(|f|^r)(x)^\frac1r.
$$
Thus we can apply part $(b)$ of Theorem \ref{theor:big} with $p_0=r$
and $q_0=\infty$ to conclude that $T$ is bounded on $L^q(u)$ for
every $r<q<\infty$ and $u\in A_{q/r}$. In particular, we have that
$T$ is bounded on $L^p(w)$.
\end{proof}

\begin{remark}\rm
We mention \cite[Theorem 3.14]{AM1} and \cite[Theorem 3.1]{Shen2} where \eqref{eq:corol:CZO2} is generalized to
$$
\Big(\aver{B}|Tf(x)|^{q_0}\,dx\Big)^\frac1{q_0}
\lesssim
\Big(\aver{2B} |Tf(x)|^{p_0}\, dx\Big)^\frac1{p_0} +  \inf_{x\in B} M\big(|f|^{p_0}\big)(x)^\frac1{p_0},
$$
for some $1\le p_0<q_0\le \infty$.
In that case, if  $T$ is bounded on $L^{p_0}(\re^n)$, proceeding as in the proofs of $(ii)$ and $(iii)$, one concludes that $T$ is bounded on $L^p(\re^n)$ for every $p_0<p<q_0$ and also on $L^p(w)$ for every $p_0<p<q_0$ and $w\in A_{p/p_0}\cap RH_{(q_0/p)'}$.
\end{remark}

\begin{remark}\label{remark:CZO-weak}\rm
To show that $T$ maps $L^1(w)$ into $L^{1,\infty}(w)$ for every $w\in A_1$ one needs to strengthen \eqref{eq:corol:CZO1}. For instance,  we can assume that \eqref{eq:corol:CZO1} holds with $dw(x)=w(x)\,dx$ in place  of $dx$ and for functions $f$ with mean value zero with respect to $dx$. In this case, we choose $\A_B$ as in $(i)$. Taking into account that $w\in A_1$ yields $\textaver{B} |f|\,dx\lesssim \textaver{B} |f|\,dw$, the proof follows the same scheme replacing everywhere (except for the definition of $\A_B$) $dx$ by $dw$. Notice that the boundedness of $T$ on $L^2(w)$ is needed, this is guaranteed by $(iii)$ as $A_1\subset A_2$.

Let us observe that we can assume that \eqref{eq:corol:CZO1} holds with $dw(x)=w(x)\,dx$ in place  of $dx$, but  for functions $f$ with mean value zero with respect to $dw$. In that case, the proof goes through by replacing everywhere $dx$ by $dw$, even in the definition of $\A_B$.

When applying this to classical operators the first approach is more natural.
\end{remark}

\begin{prop}\label{prop:CZO}
Let $T$ be a singular integral operator with kernel $K$, that is,
$$
T f(x)=\int_{\re^n} K(x,y)\,f(y)\,dy,
\qquad
x\notin\supp f,
\quad
f\in L^\infty_c,
$$
where $K$ is a measurable function defined away from the diagonal.
\begin{list}{$(\theenumi)$}{\usecounter{enumi}\leftmargin=1cm
\labelwidth=0.7cm\itemsep=0.15cm\topsep=.1cm
\renewcommand{\theenumi}{\roman{enumi}}
}

\item If $K$ satisfies the H\"{o}rmander condition
$$
\int_{|x-y|>2\,|y-y'|} |K(x,y)-K(x,y')|\,dx\le C
$$
then \eqref{eq:corol:CZO1} holds.

\item If $K$ satisfies the H\"{o}lder condition
$$
|K(x,y)-K(x',y)|\le
C\,\frac{|x-x'|^\gamma}{|x-y|^{n+\gamma}},
\qquad
|x-y|>2\,|x-x'|,
$$
for some $\gamma>0$, then \eqref{eq:corol:CZO2} holds.
\end{list}
\end{prop}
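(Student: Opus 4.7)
For part (i), the plan is to use the mean-zero condition to gain a kernel difference. With $x_0$ the center of $B=B(x_0,r)$ and $x \in \re^n \setminus 4B$ (so $x \notin \supp f$), rewrite
$$
Tf(x) = \int_B \bigl(K(x,y) - K(x,x_0)\bigr)\, f(y)\, dy,
$$
apply Fubini, and observe that for $y \in B$ one has $|x - y| \ge |x - x_0| - |y - x_0| \ge 4r - r = 3r > 2|y - x_0|$ whenever $x \notin 4B$. Thus the $x$-integration is performed over a subset of $\{x : |x - y| > 2|y - x_0|\}$, and the Hörmander hypothesis (applied with $y' = x_0$) bounds the inner integral uniformly in $y \in B$. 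This yields \eqref{eq:corol:CZO1} directly.

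For part (ii), my plan is to introduce a nearby reference point $\bar x$ and average. Specifically, for each $x \in B$ write
$$
Tf(x) = \aver{B(x, r/2)} Tf(\bar x)\, d\bar x + \aver{B(x, r/2)} \bigl( Tf(x) - Tf(\bar x) \bigr)\, d\bar x.
$$
Since $B(x,r/2) \subset 2B$, the first term is dominated by $4^n \aver{2B} |Tf(\bar x)|\, d\bar x$. For the second term, fix $\bar x \in B(x,r/2)$, use $\supp f \subset \re^n \setminus 4B$, and decompose in $y$ over the annuli $C_j(B)$, $j \ge 2$. In each annulus $|x - y| \ge (2^j - 1) r \ge 3r$, while $|x - \bar x| \le r/2$, so the Hölder hypothesis applies and gives a pointwise bound
$$
|K(x,y) - K(\bar x,y)| \le C\,\frac{(r/2)^\gamma}{\bigl((2^j-1)r\bigr)^{n+\gamma}} \lesssim 2^{-j(n+\gamma)}\, r^{-n}.
$$
Integrating against $|f|$ over $C_j(B) \subset 2^{j+1}B$ produces the geometric bound $C\,2^{-j\gamma} \aver{2^{j+1}B}|f|\, dy$, which sums over $j \ge 2$ to at most $C \inf_{x \in B} Mf(x)$. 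Taking $\sup_{x \in B}$ of the resulting pointwise inequality then delivers \eqref{eq:corol:CZO2}.

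The main obstacle is really the choice of averaging set in part (ii). Averaging $\bar x$ over the full ball $B$ (or $2B$) makes the regularity constraint $|x - y| > 2|x - \bar x|$ in the Hölder hypothesis fail for the innermost annulus $C_2(B) = 8B \setminus 4B$, where $|x - y|$ can be as small as $3r$ while $2|x - \bar x|$ may exceed $4r$. Shrinking the average to $B(x, r/2)$---still contained in $2B$, so the first term remains comparable to $\aver{2B} |Tf|\, dx$---repairs this and gives a uniform Hölder bound across all $j \ge 2$ at only a dimensional cost. Once this is set up, the geometric summability provided by the exponent $\gamma > 0$ is what delivers the maximal function on the right-hand side.
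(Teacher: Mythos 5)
Your proof is correct and follows essentially the same approach as the paper for both parts. The only cosmetic difference is in part (ii), where the paper averages over the fixed ball $\tfrac{1}{2}B$ (which also keeps the H\"older hypothesis applicable, since $|x-z|<\tfrac{3r}{2}$ and $|x-y|>3r$ give $|x-y|>2|x-z|$ for all $y\in C_j(B)$, $j\ge 2$) rather than your $x$-dependent ball $B(x,r/2)$; both sidestep the obstacle you correctly identify for averaging over $B$ or $2B$.
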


\begin{remark}\rm
Notice that in $(i)$ the smoothness is assumed with respect to the second variable and in $(ii)$ with respect to the first variable. If one assumes the stronger  H\"{o}lder condition in $(i)$, it is easy to see  that \eqref{eq:corol:CZO1} holds with $dw(x)=w(x)\,dx$ in place  of $dx$ for every $w\in A_1$. Therefore, the first approach in Remark \ref{remark:CZO-weak} yields that $T$ maps $L^1(w)$ into $L^{1,\infty}(w)$ for $w\in A_1$.
\end{remark}

\begin{proof}
We start with $(i)$. Let $B$ be a ball with center $x_B$. For every $f\in L^\infty_c(\re^n)$ with $\supp f\subset B$ and $\int_B f\,dx=0$ we obtain \eqref{eq:corol:CZO1}:
\begin{align*}
&\int_{\RR^n \setminus 4\,B} |Tf(x)|\, dx
=
\int_{\RR^n \setminus 4\,B}
\Big| \int_{B} (K(x,y)-K(x,x_B))\,f(y)\,dy\Big|\,dx
\\
&\hskip1cm
\le
\int_B |f(y)|\,\int_{|x-y|>2\,|y-x_B|} |K(x,y)-K(x,x_B)|\,dx\,dy
\lesssim
\int_B |f(y)|\,dy.
\end{align*}

We see $(ii)$. Let $B$ be a ball and $f\in L^\infty_c$  be supported on $\re^n\setminus 4\,B$. Then, for every $x\in B$ and $z\in \frac12\,B$ we have
\begin{align*}
&
|T f(x)-Tf(z)|
\le
\int_{\re^n\setminus 4\,B} |K(x,y)-K(z,y)|\,|f(y)|\,dy
\\
&
\qquad\lesssim
\sum_{j=2}^\infty \int_{C_j(B)} \frac{|x-z|^\gamma}{|x-y|^{n+\gamma}}\, |f(y)|\,dy
\lesssim
\sum_{j=2}^\infty 2^{-j\,\gamma}\,\aver{2^{j+1}\,B} |f(y)|\,dy
\lesssim
\inf_{x\in B} Mf(x).
\end{align*}
Then, for every $x\in B$ we have as desired
$$
|T f(x)|
\le
\aver{\frac12\,B} |T f(z)|\,dz
+
\aver{\frac12\,B} |Tf(x)-T f(z)|\,dz
\lesssim
\aver{2\,B} |T f(z)|\,dz
+
\inf_{x\in B} Mf(x).
$$
\end{proof}

\section{Off-diagonal estimates}\label{section:off}

We  extract from \cite{AM2}  some definitions and results (sometimes in weaker form) on  unweighted  and weighted off-diagonal estimates. See there for details and more precise statements.  Set $d(E,F)=\inf \{|x-y|\, : \, x\in E, y \in F\}$ where $E, F$ are subsets of $\RR^n$.

\begin{defin}\label{def:full}
Let $1\le p\le q \le \infty$. We say that a family $\{T_t\}_{t>0}$ of
sublinear operators satisfies  $L^p-L^q$  {full off-diagonal
estimates}, in short $T_{t}\in \fullx{p}{q}$,   if for some $c>0$,
for all closed sets $E$ and $F$, all $f$ and all $t>0$ we
have
\begin{equation}\label{eq:offLpLq}
\Big(\int_{F}|T_t (\bigchi_{E}\,  f)|^q\, dx\Big)^{\frac 1q} \lesssim  t^{- \frac 1 2 (\frac n p - \frac n q )}
\expt{-\frac{c\, d^2(E,F)}{t}} \Big( \int_{E}|f|^p\, dx\Big)^{\frac 1 p}.
\end{equation}
\end{defin}

Full off-diagonal estimates on a general space of homogenous type, or in the weighted case, are not expected since  $L^p(\mu)-L^q(\mu)$ full off-diagonal estimates when $p<q$  imply
$L^p(\mu)-L^q(\mu)$ boundedness but not $L^p(\mu)$ boundedness.
For example, the heat semigroup $e^{-t\Delta}$ on functions  for
general Riemannian manifolds with the doubling property is not $L^p-L^q$ bounded when $p<q$ unless    the measure of any ball is bounded
below by a power of  its radius.

 The following notion of off-diagonal estimates in spaces of homogeneous type  involved only balls and annuli. Here we restrict the definition of \cite{AM2} to the weighted situation, that is, for $dw=w(x)\,dx$ with $w\in A_\infty$. When $w=1$, it turns out to be equivalent to full off-diagonal estimates. Also, it passes from unweighted estimates to weighted estimates.

We set  $\dec{s}=\max\{s,s^{-1}\}$ for $s>0$.  Given a ball $B$, recall that $C_j(B)=2^{j+1}\,B\setminus
2^j\,B$ for $j\ge 2$   and if $w\in A_{\infty}$ we use the notation
$$
\aver{B} h\,dw
=
\frac1{w(B)}\,\int_B h\,dw,
\qquad
\aver{C_j(B)} h\,dw
=
\frac1{w(2^{j+1}B)}\,\int_{C_{j}(B)} h\,dw.
$$

\begin{defi}\label{defi:off-d:weights}
Given $1\le p\le q\le \infty$ and any weight $w\in A_{\infty}$, we say that a
family of sublinear operators $\{T_t\}_{t>0}$ satisfies
$L^{p}(w)-L^{q}(w)$ off-diagonal estimates on balls, in short $T_t \in\offw{p}{q}$, if there
exist $\theta_1, \theta_2>0$ and $c>0$ such that for every $t>0$ and
for any ball $B$ with radius $r$ and all $f$,
\begin{equation}\label{w:off:B-B}
\Big(\aver{B} |T_t( \bigchi_B\, f) |^{q}\,dw\Big)^{\frac 1q}
\lesssim
\dec{\frac{r}{\sqrt{t}}}^{\theta_2} \,\Big(\aver{B}
|f|^{p}\,dw\Big)^{\frac 1p};
\end{equation}
and, for all $j\ge 2$,
\begin{equation}\label{w:off:C-B}
\Big(\aver{B}|T_t( \bigchi_{C_j(B) }\, f) |^{q}\,dw\Big)^{\frac 1q}
\lesssim
2^{j\,\theta_1}\, \dec{\frac{2^j\,r}{\sqrt{t}}}^{\theta_2}\,
\expt{-\frac{c\,4^{j}\,r^2}{t}} \,
\Big(\aver{C_j(B)}|f|^{p}\,dw\Big)^{\frac 1p}
\end{equation}
and
\begin{equation}\label{w:off:B-C}
\Big(\aver{C_j(B)}|T_t( \bigchi_B\, f) |^{q}\,dw\Big)^{\frac 1q}
\lesssim
2^{j\,\theta_1}\, \dec{\frac{2^j\,r}{\sqrt{t}}}^{\theta_2}\,
\expt{-\frac{c\,4^{j}\,r^2}{t}}
\,\Big(\aver{B}|f|^{p}\,dw\Big)^{\frac 1p}.
\end{equation}
\end{defi}
Let us make some relevant comments (see \cite{AM2} for
further details and more properties).

\begin{list}{$\bullet$}{\leftmargin=.6cm
\labelwidth=0.7cm\itemsep=0.15cm\topsep=.15cm}

\item In the Gaussian factors the value of $c$ is irrelevant as long as it remains positive.

\item These definitions can be extended to complex families $\{T_z\}_{z\in \Sigma_{\theta}}$
with $t$ replaced by $|z|$ in the estimates.

\item $T_{t}$ may only be defined on a dense subspace $\D$ of $L^p$ or $L^p(w)$ ($1\le p<\infty$) that is stable by truncation by indicator functions of measurable sets (for example, $L^p\cap L^2$, $L^p(w) \cap L^2$ or $L^\infty_{c}$).

\item If $q=\infty$,
one should adapt the definitions in the usual straightforward way.

\item $L^1(w)-L^\infty(w)$  off-diagonal estimates on balls
are equivalent to pointwise  Gaussian upper bounds  for the
kernels of $T_{t}$.

\item H\"{o}lder's inequality implies $
\offw{p}{q}\subset\offw{p_1}{q_1}
$
 for all $p_1,q_1$ with $p\le p_1\le q_1\le
q$.

\item  If $T_t\in\offw{p}{p}$, then $T_t$ is uniformly bounded on $L^p(w)$.

\item This notion is stable by composition:
$T_t\in\offw{q}{r}$ and $S_t\in\offw{p}{q}$ imply $T_t\circ S_t\in
\offw{p}{r}$ when $1\le p
\le q \le r\le\infty$.

\item When $w=1$, $L^p-L^q$ off-diagonal estimates on balls  are equivalent to $L^p-L^q$ full off-diagonal estimates.

\item Given $1\le p_0<q_0\le \infty$, assume that $T_t\in\off{p}{q}$ for every $p$, $q$ with $p_0<p \le q<q_0$. Then, for all $p_0<p \le q<q_0$
    and for any $w\in A_{\frac{p}{p_0}}\cap  RH_{(\frac {q_{0}}q)'}$ we have that $T_t\in \offw{p}{q}$, equivalently, $T_t\in \offw{p}{q}$ for every $p\le q$ with $p,q\in\W_w(p_0,q_0)$.

\end{list}

\section{Elliptic operators and their off-diagonal estimates}\label{sec:elliptic}
We introduce the class of elliptic operators considered.
Let $A$ be an $n\times n$ matrix of complex and
$L^\infty$-valued coefficients defined on $\re^n$. We assume that
this matrix satisfies the following ellipticity (or \lq\lq
accretivity\rq\rq) condition: there exist
$0<\lambda\le\Lambda<\infty$ such that
$$
\lambda\,|\xi|^2
\le
\Re A(x)\,\xi\cdot\bar{\xi}
\quad\qquad\mbox{and}\qquad\quad
|A(x)\,\xi\cdot \bar{\zeta}|
\le
\Lambda\,|\xi|\,|\zeta|,
$$
for all $\xi,\zeta\in\co^n$ and almost every $x\in \re^n$. We have used the notation
$\xi\cdot\zeta=\xi_1\,\zeta_1+\cdots+\xi_n\,\zeta_n$ and therefore
$\xi\cdot\bar{\zeta}$ is the usual inner product in $\co^n$. Note
that then
$A(x)\,\xi\cdot\bar{\zeta}=\sum_{j,k}a_{j,k}(x)\,\xi_k\,\bar{\zeta_j}$.
Associated with this matrix we define the second order divergence
form operator
$$
L f
=
-\div(A\,\nabla f),
$$
which is understood in the standard weak sense as a maximal-accretive operator on $L^2(\RR^n,dx)$ with domain $\D(L)$ by means of a
sesquilinear form.

The operator $-L$ generates a $C^0$-semigroup
$\{e^{-t\,L}\}_{t>0}$ of contractions on $L^2(\RR^n,dx)$.
Define  $\vartheta\in[0,\pi/2)$ by,
$$
\vartheta
=
\sup
\big\{ \big|\arg \langle  Lf,f\rangle\big|\, : \,
f\in\mathcal{D}(L)\big\}.
$$
Then, the semigroup   has an analytic extension to a complex semigroup
$\{e^{-z\,L}\}_{z\in\Sigma_{\pi/2- \vartheta}}$ of contractions on
$L^2(\RR^n,dx)$. Here we have written
$
\Sigma_{\theta}
=
\{z\in\co^*:|\arg z|<\theta\}
$, $0<\theta<\pi$.

The families
$\{e^{-t\,L}\}_{t>0}$, $\{\sqrt t\, \nabla e^{-t\,L}\}_{t>0}$, and their analytic extensions satisfy full off-diagonal on $L^2(\re^n)$. These estimates can be extended to some other ranges that, up to endpoints, coincide with those of uniform boundedness.

We define $\widetilde \J(L)$, respectively $\widetilde \K(L)$, as the interval of those exponents $p\in[1,\infty]$ such that  $\{e^{-t\,L}\}_{t>0}$, respectively $\{\sqrt t\, \nabla e^{-t\,L}\}_{t>0}$, is a bounded set in  $\mathcal{L}(L^p(\re^n))$  (where $\mathcal{L}(X)$ is the
space of linear continuous maps on a Banach space $X$).

\begin{prop}[\cite{Aus, AM2}] \label{prop:sgfull}
Fix $m\in \NN$ and $0<\mu <\pi/2-\vartheta$.
\begin{list}{$(\theenumi)$}{\usecounter{enumi}\leftmargin=.8cm
\labelwidth=0.7cm\itemsep=0.15cm\topsep=.3cm
\renewcommand{\theenumi}{\alph{enumi}}}

\item   There exists a non empty maximal  interval of $[1,\infty]$,  denoted by $\J(L)$, such that if $p, q \in \J(L)$ with $p\le q$, then  $\{e^{-t\,L}\}_{t>0}$ and
    $\{(zL)^me^{-z\,L}\}_{z\in \Sigma_{\mu}}$ satisfy  $L^p-L^q$ full off-diagonal estimates  and are bounded sets in  $\mathcal{L}(L^p)$. Furthermore, $\J(L)\subset \widetilde \J(L)$ and $\Int\J(L)=\Int\widetilde\J(L)$.

\item There exists a non empty maximal interval of $[1,\infty]$, denoted by $\K(L)$,  such that if $p, q \in \K(L)$ with $p\le q$, then $\{ \sqrt t \, \nabla e^{-t\,L}\}_{t>0}$ and
    $\{ \sqrt z \, \nabla (zL)^me^{-z\,L}\}_{z\in \Sigma_{\mu}}$ satisfy $L^p-L^q$ full off-diagonal estimates and  are  bounded sets in  $\mathcal{L}(L^p)$.   Furthermore, $\K(L) \subset \widetilde \K(L)$ and $\Int\K(L)=\Int\widetilde\K(L)$.

\item $\K(L) \subset \J(L)$ and, for $p<2$, we have $p\in \K(L)$ if and only if $p\in \J(L)$.

\item Denote by $p_{-}(L), p_{+}(L)$ the lower and upper bounds of \,$\J(L)$ \textup{(}hence,
of \/ $\Int\widetilde\J(L)$ also\textup{)} and by $q_{-}(L),
q_{+}(L)$ those of \,$\K(L)$ \textup{(}hence, of
\,$\Int\widetilde\K(L)$ also\textup{)}. We have $p_{-}(L)=q_{-}(L)$
and $(q_{+}(L))^*\le p_{+}(L)$.

\item If $n=1$,  $\J(L) =\K(L)=[1,\infty]$.

\item If $n=2$, $\J(L)=[1,\infty]$ and $\K(L)\supset [1,q_{+}(L))$ with $q_{+}(L)>2$.

\item If $n\ge 3$,  $p_{-}(L)< \frac{2n}{n+2}$,  $p_{+}(L)> \frac{2n}{n-2}$ and $q_{+}(L)>2$.
\end{list}
\end{prop}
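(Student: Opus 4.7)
The plan is to build $\J(L)$ and $\K(L)$ out of a Davies--Gaffney $L^2$ base case, extrapolate in $p$ by interpolation, and then extract $(c)$--$(g)$ from Sobolev embedding and duality. First I would establish $L^2-L^2$ full off-diagonal estimates for the semigroup $\{e^{-zL}\}$, its powers $\{(zL)^m e^{-zL}\}$, and the gradient family $\{\sqrt z\,\nabla e^{-zL}\}$ on the whole sector $\Sigma_{\pi/2-\vartheta}$. The standard perturbation trick is to fix a Lipschitz function $\psi$ with $\|\nabla\psi\|_\infty\le 1$, consider the conjugated operator $e^{\alpha\psi}e^{-zL}e^{-\alpha\psi}$ for a real parameter $\alpha$, and check from the sesquilinear form defining $L$ that it has $L^2$ operator bound growing like $\expt{c|z|\alpha^2}$. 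Testing against $\bigchi_E f$ and $\bigchi_F g$ and optimizing $\alpha\asymp d(E,F)/|z|$ produces the Gaussian factor $\expt{-c\,d(E,F)^2/|z|}$. Caccioppoli's inequality applied to the weak solution $u(t,\cdot)=e^{-tL}f$ of $(\partial_t+L)u=0$ upgrades this to the gradient family, and Cauchy's formula on the sector of holomorphy yields the estimates for $(zL)^m e^{-zL}$.

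Next, define $\widetilde\J(L)$ and $\widetilde\K(L)$ as in the statement; both are intervals containing $2$ by Riesz--Thorin. To obtain the off-diagonal statements of $(a)$ and $(b)$ on the interior, I would use a Blunck--Kunstmann type analytic interpolation: the family $z\mapsto e^{\alpha\psi}e^{-zL}e^{-\alpha\psi}$ is holomorphic in a sector, bounded on $L^2$ with the exponential growth above, and uniformly bounded on $L^p$ for $p$ in a neighborhood of any interior point of $\widetilde\J(L)$. Stein interpolation preserves the Gaussian factor and yields $L^p-L^p$ full off-diagonal. To pass from $L^p-L^p$ and $L^q-L^q$ to $L^p-L^q$ for $p<q$ both in $\J(L)$, decompose $e^{-tL}=e^{-tL/3}\circ e^{-tL/3}\circ e^{-tL/3}$, let the middle factor carry the $L^2$-based gain (Nash--Moser style), and use the outer factors as endpoints in a Riesz--Thorin step. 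This simultaneously defines $\J(L)$ and shows $\Int\J(L)=\Int\widetilde\J(L)$; the gradient family is treated the same way for $\K(L)$.

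Finally, parts $(c)$--$(g)$ follow by interpreting the structure. For $(c)$, the identity $\partial_t e^{-tL}=-\div(A\,\nabla e^{-tL})$ combined with duality converts uniform $L^p$ control of the gradient family into uniform $L^p$ control of the semigroup; for $p<2$ this passage is in fact reversible, giving $\K(L)\subset\J(L)$ and the equivalence. Part $(d)$ then has $p_{-}(L)=q_{-}(L)$ immediately, and the inequality $(q_{+}(L))^*\le p_{+}(L)$ comes from Sobolev embedding applied to the off-diagonal gradient estimate, upgrading $L^p-L^p$ for $\sqrt t\,\nabla e^{-tL}$ into $L^p-L^{p^*}$ off-diagonal for $e^{-tL}$. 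For $(e)$--$(g)$, the dimensional Sobolev embeddings applied to the $L^2$ baseline give $p_{+}(L)\ge 2^*=2n/(n-2)$, strict inequality coming from openness of $\Int\widetilde\J(L)$ together with $q_{+}(L)>2$; the bound $p_{-}(L)<2n/(n+2)=(2^*)'$ is the dual statement applied to the adjoint $L^*$, which is of the same form. The main obstacle is the extrapolation step: preserving the Gaussian factor through the interpolation between the $L^2$ off-diagonal estimate and the $L^p$ uniform bounds requires a careful Stein--Hirschman argument tailored to the twisted semigroup, and is where the subtle distinction between $\J(L)$ and $\widetilde\J(L)$---coinciding only on their interiors---originates.
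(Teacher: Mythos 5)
Your $L^2$ groundwork is sound: the Davies--Gaffney estimate via conjugation by $e^{\alpha\psi}$, Caccioppoli for the gradient family, and Cauchy's formula for $(zL)^m e^{-zL}$ are exactly the standard ingredients. The Riesz--Thorin (or Stein) interpolation of the $L^2$-$L^2$ Gaussian off-diagonal estimate against uniform $L^{p_0}$ boundedness, preserving a (possibly shrunken) Gaussian factor, is also correct and gives $L^p$-$L^p$ off-diagonal estimates on $\Int\widetilde\J(L)$. However, there are two genuine gaps.

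The more serious one is the step producing $L^p$-$L^q$ off-diagonal estimates with the polynomial factor $t^{-\frac n2(\frac1p-\frac1q)}$ for $p<q$ in $\J(L)$. The ``decompose into three factors and let the middle one carry the $L^2$-based gain (Nash--Moser style)'' argument would, if it ran, give hypercontractivity $L^2\to L^\infty$ and hence $p_+(L)=\infty$, which is \emph{false} for general complex-coefficient $L$ in dimension $n\ge 5$. The iteration genuinely stops, and what stops it is exactly the gradient family: the only available Sobolev improving is $\|e^{-tL}f\|_{p^*}\lesssim\|\nabla e^{-tL}f\|_p\lesssim t^{-1/2}\|f\|_p$, which requires $p\in\K(L)$, and the chain terminates at $q_+(L)$. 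Thus the inequality $(q_+)^*\le p_+$ is not a downstream consequence as you present it in part $(d)$; it is the \emph{mechanism} by which $\J(L)$ is built above $2$, and the construction of $\J(L)$ cannot be decoupled from $\K(L)$ in the way your outline suggests. This is where the cited sources \cite{Aus,AM2} invest the bulk of their work, combining the $L^2$ theory with the Calder\'{o}n--Zygmund and good-$\lambda$ machinery (the very Theorems \ref{theor:small} and \ref{theor:big} of this survey), duality to $L^*$, and the Sobolev improving through $\K(L)$; it is not a pure interpolation argument.

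The second gap is in part $(c)$. The identity $\partial_t e^{-tL}=-\div(A\nabla e^{-tL})$ maps $L^p$-control of $\nabla e^{-tL}$ into control in $W^{-1,p}$, since the divergence loses a derivative, and no amount of direct duality recovers $\|e^{-tL}f\|_p\lesssim\|f\|_p$ from this. The actual proof of $\K(L)\subset\J(L)$ again routes through Sobolev improving and duality to $L^*$ (using $p_-(L)=p_+(L^*)'$), not the pointwise div identity. The remaining items $(d)$--$(g)$ via Sobolev numerology and duality to the adjoint are fine once the intervals are correctly constructed.
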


We have set $q^*= \frac{q\, n }{n - q}$, the Sobolev exponent of $q$ when $q<n$ and $q^*=\infty$ otherwise.

Given  $w\in A_\infty$, we define  $\widetilde \J_w(L)$, respectively $\widetilde \K_w(L)$, as the interval of those exponents $p\in[1,\infty]$ such that  the semigroup $\{e^{-t\,L}\}_{t>0}$, respectively its gradient $\{\sqrt t\, \nabla e^{-t\,L}\}_{t>0}$, is uniformly bounded on  $L^p(w)$. As in Proposition \ref{prop:sgfull}  uniform boundedness and weighted off-diagonal estimates on balls hold essentially in the same ranges.

\begin{prop} [\cite{AM2}]\label{prop:sg-w:extension}
Fix $m\in \NN$ and $0<\mu <\pi/2-\vartheta$.  Let $w \in A_{\infty}$.
\begin{list}{$(\theenumi)$}{\usecounter{enumi}\leftmargin=.8cm
\labelwidth=0.7cm\itemsep=0.15cm\topsep=.15cm
\renewcommand{\theenumi}{\alph{enumi}}}
\item  Assume $\W_{w}\big(p_{-}(L), p_{+}(L)\big)\ne \emptyset$. There is a maximal interval of $[1,\infty]$,  denoted by $\J_w(L)$, containing $\W_{w}\big(p_{-}(L), p_{+}(L)\big)$, such that  if $p, q \in \J_{w}(L)$ with $p\le q$, then $\{e^{-t\,L}\}_{t>0}$ and  $\{(zL)^me^{-z\,L}\}_{z\in \Sigma_{\mu}}$ satisfy $L^p(w)-L^q(w)$ off-diagonal estimates on balls and are  bounded sets in  $\mathcal {L} (L^p(w))$. Furthermore, $\J_{w}(L)\subset \widetilde\J_{w}(L)$ and $\Int\J_{w}(L)=\Int\widetilde\J_{w}(L)$.

\item Assume $\W_{w}\big(q_{-}(L), q_{+}(L)\big)\ne \emptyset$. There exists a maximal interval of $[1,\infty]$, denoted by $\K_w(L)$, containing $\W_{w}\big(q_{-}(L), q_{+}(L)\big)$ such that if $p, q \in \K_{w}(L)$ with $p\le q$, then $\{ \sqrt t \, \nabla e^{-t\,L}\}_{t>0}$ and
    $\{ \sqrt z \, \nabla (zL)^me^{-z\,L}\}_{z\in \Sigma_{\mu}}$ satisfy $L^p(w)-L^q(w)$ off-diagonal estimates on balls and are bounded sets in $\mathcal{L}(L^p(w))$.  Furthermore, $\K_{w}(L)\subset \widetilde \K_{w}(L)$ and $\Int\K_{w}(L)=\Int\widetilde\K_{w}(L)$.

\item  Let $n\ge 2$. Assume $ \W_{w}\big(q_{-}(L), q_{+}(L)\big)\ne \emptyset$. Then $\K_{w}(L)\subset \J_{w}(L)$. Moreover,   $\inf \J_{w}(L) = \inf \K_{w}(L)$ and
$(\sup \K_{w}(L))^*_w \le  \sup \J_{w}(L) $.
\item If $n=1$,  the intervals $\J_{w}(L)$ and $\K_{w}(L)$ are the same and contain $(r_{w},\infty]$ if $w\notin A_{1}$ and are equal to  $[1,\infty]$ if $w\in A_{1}$.
\end{list}
\end{prop}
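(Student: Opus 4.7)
The plan is to deduce Proposition \ref{prop:sg-w:extension} from the unweighted Proposition \ref{prop:sgfull} via the extrapolation principle stated in the last bullet after Definition \ref{defi:off-d:weights}, which upgrades $L^p - L^q$ full off-diagonal estimates into $L^p(w) - L^q(w)$ off-diagonal estimates on balls for every pair $p \le q$ in $\W_w(p_0, q_0)$. For (a), I would apply this extrapolation to $\{e^{-tL}\}_{t>0}$ and $\{(zL)^m e^{-zL}\}_{z \in \Sigma_\mu}$, which belong to $\fullx{p}{q}$ for all $p_{-}(L) < p \le q < p_{+}(L)$ by Proposition \ref{prop:sgfull}(a), with $(p_0, q_0) = (p_{-}(L), p_{+}(L))$; for (b) proceed identically with the gradient family and $(p_0,q_0) = (q_{-}(L), q_{+}(L))$. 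The standing hypothesis $\W_w(p_{-}(L), p_{+}(L)) \neq \emptyset$ (respectively $\W_w(q_{-}(L), q_{+}(L)) \neq \emptyset$) then guarantees a non-trivial weighted range, and I would define $\J_w(L)$ (respectively $\K_w(L)$) as the maximal sub-interval of $[1,\infty]$ containing $\W_w(p_{-}(L), p_{+}(L))$ (respectively $\W_w(q_{-}(L), q_{+}(L))$) on which the off-diagonal estimates on balls hold for all $p \le q$; maximality is a routine union argument.

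\textbf{Uniform boundedness and the interior equality.} Taking $p = q$ in \eqref{w:off:B-B}--\eqref{w:off:B-C} and summing against the Gaussian factors, absorbing the polynomial factor $2^{j \theta_1}$ through the doubling of $dw$, yields uniform $L^p(w)$-boundedness, so $\J_w(L) \subset \widetilde{\J}_w(L)$ and $\K_w(L) \subset \widetilde{\K}_w(L)$. The delicate half of the equality $\Int \J_w(L) = \Int \widetilde{\J}_w(L)$ is the reverse inclusion. Given $p$ in the interior of $\widetilde{\J}_w(L)$, I would pick $p_1 \in \J_w(L)$ produced by the extrapolation step and $p_2 \in \widetilde{\J}_w(L)$ with $p$ strictly between them, and interpolate the weighted off-diagonal estimates at $p_1$ against the bare $L^{p_2}(w)$-boundedness via a Stein complex interpolation performed annulus by annulus; the analyticity of the semigroup on $\Sigma_{\pi/2-\vartheta}$ and Cauchy's formula are used to preserve the Gaussian decay through the interpolation, thereby placing $p$ in $\J_w(L)$. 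The gradient case is parallel. This self-improvement from bare boundedness to genuine exponential decay is the principal technical obstacle, and is precisely what is carried out in detail in \cite{AM2}.

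\textbf{Parts (c) and (d).} For (c), the inclusion $\K_w(L) \subset \J_w(L)$ and the weighted Sobolev gain $(\sup \K_w(L))^*_w \le \sup \J_w(L)$ follow from the factorization $e^{-tL} = e^{-\frac{t}{2}L} \circ e^{-\frac{t}{2}L}$: one factor is estimated via the weighted gradient off-diagonal bound, the other via a weighted Sobolev/Poincar\'e embedding, and the composition stability of off-diagonal estimates on balls (another bullet in Section \ref{section:off}) preserves the annular Gaussian decay. For (d), Proposition \ref{prop:sgfull}(e) gives $\J(L) = \K(L) = [1,\infty]$, and in one dimension $e^{-tL}$ and $\sqrt{t}\,\nabla e^{-tL}$ admit pointwise Gaussian upper bounds (an elementary ODE computation), so the weighted estimates reduce to an $A_\infty$-weighted norm inequality for the Gaussian convolution operator; this gives $(r_w, \infty]$ for general $w \in A_\infty$ and all of $[1,\infty]$ when $w \in A_1$.
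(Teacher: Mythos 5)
The paper itself does not prove this proposition; it is quoted directly from \cite{AM2}, so there is no ``paper's own proof'' to compare against. That said, your overall strategy for parts (a) and (b) is the correct one and matches what \cite{AM2} does: carry the unweighted $L^p\mbox{--}L^q$ full off-diagonal estimates of Proposition~\ref{prop:sgfull} to $L^p(w)\mbox{--}L^q(w)$ off-diagonal estimates on balls via the extrapolation bullet following Definition~\ref{defi:off-d:weights}, then define $\J_w(L)$, $\K_w(L)$ as the maximal intervals. The identification $\Int\J_w(L)=\Int\widetilde\J_w(L)$ by interpolating a weighted off-diagonal estimate against a bare weighted $L^{p_2}(w)$ bound is also the right idea; but you do not need Stein analytic interpolation done annulus-by-annulus --- a straight Riesz--Thorin (complex) interpolation of each truncated operator $\bigchi_{C_j(B)}T_t\bigchi_B$ between $L^{p_1}(w)$ and $L^{p_2}(w)$ already reproduces a fractional power of the Gaussian factor, which is still Gaussian since the constant $c$ is irrelevant. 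Cauchy's formula and analyticity of the semigroup are really what handle the passage from $\{e^{-tL}\}$ to $\{(zL)^m e^{-zL}\}_{z\in\Sigma_\mu}$, not the self-improvement of uniform boundedness to off-diagonal decay.

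The genuine gap is in part (c). The factorization $e^{-tL}=e^{-\frac t2 L}\circ e^{-\frac t2 L}$ together with a weighted Sobolev embedding does give the gain $(\sup\K_w(L))^*_w\le\sup\J_w(L)$, and from that $\sup\K_w(L)\le\sup\J_w(L)$. But it does \emph{not}, as written, give either the inclusion $\K_w(L)\subset\J_w(L)$ or the equality $\inf\J_w(L)=\inf\K_w(L)$: you would be estimating $e^{-\frac t2 L}g$ with $g=e^{-\frac t2 L}f\in L^{p^*_w}(w)$ precisely by the semigroup off-diagonal estimate you are trying to establish, which is circular. The lower-endpoint equality requires the reverse implication --- that semigroup off-diagonal estimates near $\inf\J_w(L)$ force gradient off-diagonal estimates there --- and the mechanism for this is a (weighted) Caccioppoli-type inequality for solutions of $\partial_t u + Lu =0$, applied on parabolic cylinders, combined with the $L^2$ theory from the Kato solution. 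Your proposal never invokes Caccioppoli or any elliptic/parabolic energy estimate, so this direction is missing. For part (d), the passage from a pointwise Gaussian kernel bound to $L^p(w)\mbox{--}L^q(w)$ off-diagonal on balls is correct in spirit, but it rests on comparing $dx$-averages to $dw$-averages on balls, which uses precisely $w\in A_p$ (hence the intervals $(r_w,\infty]$ and $[1,\infty]$ for $w\in A_1$); it is worth making that dependence explicit rather than treating it as a pure kernel reduction.
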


We have set $q^*_w= \frac{q\, n \, r_{w}}{n\, r_{w} - q}$ when $q<n\, r_{w}$ and $q^*_{w}=\infty$ otherwise. Recall that $r_{w}=\inf\{r\ge 1\, : \, w\in A_{r}\}$ and
also that $s_{w}=\sup\{s>1\, : \, w\in RH_{s}\}$.

Note that  $\W_{w}\big(p_{-}(L), p_{+}(L)\big)\ne \emptyset$ means
$\frac{p_{+}(L)}{p_{-}(L)}  >     r_{w}(s_{w})'$.  This is a
compatibility condition between $L$ and $w$. Similarly,
$\W_{w}\big(q_{-}(L), q_{+}(L)\big)\ne \emptyset $ means
$\frac{q_{+}(L)}{q_{-}(L)}  >  r_{w}(s_{w})'$, which is a more
restrictive condition on $w$ since $q_{-}(L)=p_{-}(L)$ and
$(q_{+}(L))^*\le p_{+} (L)$.

In the case of real operators, $\J(L)=[1,\infty]$ in all dimensions
because the kernel $e^{-t\, L}$ satisfies a pointwise Gaussian upper
bound.  Hence $\W_{w}\big(p_{-}(L), p_{+}(L)\big)= (r_{w},\infty)$.
If $w\in A_{1}$, then one has that $\J_{w}(L)=[1,\infty]$. If
$w\notin A_{1}$, since the kernel is also positive and satisfies a
similar pointwise lower bound, one has $\J_{w}(L) \subset
(r_{w},\infty] $.  Hence, $\Int \J_{w}(L)= \W_{w}\big(p_{-}(L),
p_{+}(L)\big)$.

The situation may change for complex operators. But we lack of examples to say whether or not
  $\J_{w}(L)$ and $ \W_{w}\big(p_{-}(L), p_{+}(L)\big)$  have  different endpoints.

\begin{remark}\label{remark:inf-gen}
\rm
Note that by density of $L^\infty_{c}$ in the spaces $L^p(w)$ for
$1\le p <\infty$, the various extensions of $e^{-z\, L}$ and $\sqrt{z}\,\nabla
e^{-z\, L}$ are all consistent. We keep the above notation to denote
any such extension.  Also, we showed in \cite{AM2} that as long as
$p\in \J_{w}(L)$ with $p\ne\infty$, $\{e^{-t\, L}\}_{t>0}$ is strongly
continuous on $L^p(w)$, hence it has an
infinitesimal generator in $L^p(w)$, which is of type $\vartheta$.
\end{remark}

\section{Applications}\label{section:applications}

In this section we apply the generalized Calder\'{o}n-Zygmund theory presented above to obtain weighted estimates for operators that are associated with $L$. The off-diagonal estimates on balls introduced above are one of the main tools.

Associated with $L$ we have the four numbers $p_{-}(L)=q_{-}(L)$ and $p_{+}(L)$, $q_{+}(L)$.  We  often drop $L$ in the notation: $p_{-}=p_{-}(L)$, $\dots$. Recall that the semigroup and its analytic extension are uniformly bounded and satisfy full off-diagonal estimates (equivalently, off-diagonal estimates on balls) in the interval $\Int \J(L)=\Int \widetilde{\J}(L)=(p_-,p_+)$. Up to endpoints, this interval is maximal for these properties.
Analogously, the gradient of the semigroup is ruled by the interval $\Int \K(L)=\Int \widetilde{\K}(L)=(q_-,q_+)$.

Given  $w\in A_{\infty}$, if $\W_w\big(p_-,p_+\big)\neq\emptyset$, then the open interval $\Int \J_w(L)$ contains $\W_w\big(p_-,p_+\big)$ and characterizes (up to endpoints) the uniform $L^p(w)$-boundedness and the weighted off-diagonal estimates on balls of the semigroup and its analytic extension. For the gradient, we assume that  $\W_w\big(q_-,q_+\big)\neq\emptyset$ and the corresponding maximal interval is
$\Int\K_w(L)$.

\subsection{Functional calculi}

Let  $\mu\in(\vartheta,\pi)$   and
 $\varphi$ be a holomorphic function in $\Sigma_{\mu}$  with the following decay
\begin{equation}\label{decay:varphi}
|\varphi(z)|
\le
c\,|z|^s\,(1+|z|)^{-2\,s},
\qquad
z\in\Sigma_\mu,
\end{equation}
for some $c$, $s>0$. Assume that $\vartheta<\theta<\nu<\mu<\pi/2$. Then we have
\begin{equation}\label{phi-L}
\varphi(L)
=
\int_{\Gamma_+} e^{-z\,L}\,\eta_+(z)\,dz +\int_{\Gamma_-}
e^{-z\,L}\,\eta_-(z)\,dz,
\end{equation}
where $\Gamma_{\pm}$ is the half ray $\re^+\,e^{\pm
i\,(\pi/2-\theta)}$,
\begin{equation}\label{phi-L:eta}
\eta_{\pm}(z)
=
\frac1{2\,\pi\,i}\,\int_{\gamma_{\pm}}
e^{\zeta\,z}\,\varphi(\zeta)\,d\zeta,
\qquad
z\in\Gamma_{\pm},
\end{equation}
with $\gamma_{\pm}$ being the half-ray $\re^+\,e^{\pm i\,\nu}$ (the
orientation of the paths is not needed in what follows so we do not
pay attention to it). Note that
$
|\eta_{\pm}(z)| \lesssim \min ( 1, |z|^{-s-1})$ for $z\in\Gamma_{\pm}
$,
hence the representation \eqref{phi-L} converges in norm in $\mathcal{L}(L^2)$.
  Usual arguments show the functional property
$\varphi(L)\,\psi(L)=(\varphi\,\psi)(L)$ for two such functions
$\varphi,\psi$.

Any $L$ as above is maximal-accretive and so it has a bounded
holomorphic functional calculus on $L^2$. Given any angle
$\mu\in(\vartheta,\pi)$:
\begin{list}{$(\theenumi)$}{\usecounter{enumi}\leftmargin=.8cm
\labelwidth=0.7cm\itemsep=0.15cm\topsep=.3cm
\renewcommand{\theenumi}{\alph{enumi}}}

\item   For any function $\varphi$, holomorphic and
bounded in $\Sigma_\mu$, the operator $\varphi(L)$ can be defined and is bounded on
$L^2$ with
$$
\|\varphi(L)f\|_{2}
\le
C\,\|\varphi\|_{\infty}\,\|f\|_{2}
$$
where $C$  only depends on $\vartheta$ and $\mu$.

\item For any sequence $\varphi_{k}$ of bounded and holomorphic functions
on $\Sigma_{\mu}$ converging uniformly  on compact subsets of
$\Sigma_{\mu}$ to $\varphi$, we have that $\varphi_{k}(L)$ converges
strongly to $\varphi(L)$ in $\mathcal{L}(L^2)$.

\item The product rule $\varphi(L)\,\psi(L)=(\varphi\,\psi)(L)$ holds for  any two
bounded and holomorphic  functions $\varphi,\psi$  in $\Sigma_{\mu}$.
\end{list}

Let us point out that for more general holomorphic functions (such as
powers), the operators $\varphi(L)$ can be defined as unbounded
operators.

Given a functional Banach space $X$, we say that $L$ has a bounded
holomorphic functional calculus on $X$ if for any
$\mu\in(\vartheta,\pi)$, and for any $\varphi$ holomorphic and satisfying \eqref{decay:varphi} in
$\Sigma_\mu$, one has
\begin{equation}
\label{eq:fcX}
\|\varphi(L)f\|_{X}
\le
C\,\|\varphi\|_{\infty}\,\|f\|_{X},
\qquad
f\in X\cap L^2,
\end{equation}
where $C$ depends only on $X$, $\vartheta$ and $\mu$ (but not on the decay of $\varphi$).

If  $X=L^p(w)$ as below,  then \eqref{eq:fcX} implies that
$\varphi(L)$ extends to a bounded operator on $X$ by density. That
$(a)$, $(b)$ and $(c)$ hold with $L^2$ replaced by $X$ for all
bounded holomorphic functions in $\Sigma_{\mu}$, follow from the
theory in \cite{Mc} using the fact that on those $X$, the semigroup
$\{e^{-t\, L}\}_{{t>0}}$ has an infinitesimal generator which is of
type $\vartheta$ (see Remark \ref{remark:inf-gen}).

\begin{theor}[\cite{BK1, Aus}]\label{theor:B-K:Aus}
If $p\in \Int\J(L)$ then  $L$ has a bounded holomorphic functional calculus on $L^p(\re^n)$. Furthermore, this range is sharp up to endpoints.
\end{theor}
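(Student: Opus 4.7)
The plan is to verify the hypotheses of the two Calder\'{o}n--Zygmund criteria of Section \ref{section:CZ} for $T = \varphi(L)$ with $\varphi$ holomorphic on $\Sigma_\mu$ satisfying the decay \eqref{decay:varphi}. Since $L$ has a bounded holomorphic calculus on $L^2$, one has $\|Tf\|_2 \le C\|\varphi\|_\infty \|f\|_2$, supplying the \emph{a priori} hypothesis of both Theorems \ref{theor:small} and \ref{theor:big} whenever one of $p_0,q_0$ equals $2$. For every ball $B$ of radius $r_B$, I would take
$$\A_B = I - (I - e^{-r_B^2 L})^m = \sum_{k=1}^m (-1)^{k+1}\binom{m}{k} e^{-k r_B^2 L},$$
with $m \in \NN$ chosen large depending on the dimension $n$ and the decay exponent $s$ in \eqref{decay:varphi}. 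Once \eqref{eq:fcX} is established with a constant depending only on $\|\varphi\|_\infty$, $\vartheta$ and $\mu$, the extension from $\varphi$ with decay to arbitrary bounded holomorphic $\varphi$ is the standard McIntosh convergence argument.

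For $p \in (p_-, 2)$ I would apply Theorem \ref{theor:small} with some $p_0 \in (p_-, p)$ and $q_0 = 2$. Condition \eqref{small:A} is immediate: $\A_B$ is a linear combination of the operators $e^{-k r_B^2 L}$, each of which satisfies $L^{p_0}$--$L^2$ off-diagonal estimates by Proposition \ref{prop:sgfull}, yielding coefficients $\alpha_j \lesssim e^{-c\,4^j}$. Condition \eqref{small:T:I-A:variant} (or the stronger \eqref{small:T:I-A}) is where the real work lies; using \eqref{phi-L} one writes
$$T(I-\A_B) = \int_{\Gamma_\pm} e^{-zL}(I - e^{-r_B^2 L})^m \,\eta_\pm(z)\, dz,$$
and estimates the integrand through $L^{p_0}$--$L^{p_0}$ off-diagonal bounds, legitimate because $p_0 \in \Int\J(L)$. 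Splitting the contour at $|z| = r_B^2$: for $|z| \le r_B^2$ one exploits the Gaussian decay $\exp(-c\,4^j r_B^2/|z|)$ from $e^{-zL}$ together with uniform $L^{p_0}$ boundedness of $(I - e^{-r_B^2 L})^m$ and $|\eta_\pm|\lesssim 1$; for $|z| > r_B^2$ one expands $(I - e^{-r_B^2 L})^m = \int_{[0,r_B^2]^m} L^m e^{-(s_1+\cdots+s_m)L}\,d\vec{s}$ and uses $\|(zL)^m e^{-zL}\|_{L^{p_0}\to L^{p_0}}\lesssim 1$ to extract the decay $(r_B^2/|z|)^m$, combined with $|\eta_\pm(z)|\lesssim |z|^{-s-1}$. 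Taking $m$ sufficiently large makes $\alpha_j$ summable against $2^{Dj}$.

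For $p \in (2, p_+)$ I would apply Theorem \ref{theor:big} with $p_0 = 2$ and some $q_0 \in (p, p_+)$. Condition \eqref{big:T:I-A} follows from the very same contour analysis, now with $L^2$--$L^2$ off-diagonal estimates. For \eqref{big:T:A} the pivotal point is that $\varphi(L)$ commutes with every $e^{-tL}$, and hence with $\A_B$; thus $T\A_B f = \A_B(Tf)$, and the required control by $L^2$ averages of $Tf$ reduces to an $L^2$--$L^{q_0}$ off-diagonal estimate for $\A_B$, again a consequence of Proposition \ref{prop:sgfull}. Combining both cases gives boundedness on $L^p$ for every $p$ in the open interval $(p_-, p_+) = \Int\J(L)$.

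The main obstacle I anticipate is the precise bookkeeping in the contour integral verification of \eqref{small:T:I-A}: one simultaneously juggles the off-diagonal scaling factor $\dec{2^j r_B/\sqrt{|z|}}^{\theta_2}$, the two-sided bound $|\eta_\pm(z)|\lesssim \min(1, |z|^{-s-1})$, and the change of regime of $(I-e^{-r_B^2 L})^m$ across the threshold $|z|\sim r_B^2$, and must ensure that after integrating in $z$ the resulting coefficients $\alpha_j$ decay sufficiently fast, uniformly in $\|\varphi\|_\infty$. Sharpness, as the paper notes, is a separate matter: there are complex elliptic operators for which the semigroup $\{e^{-tL}\}$ is unbounded on $L^p$ outside $[p_-(L),p_+(L)]$, and since $\varphi(z)=e^{-z}$ can be recovered as a limit of decaying holomorphic symbols, the $H^\infty$ calculus cannot be bounded on $L^p$ for $p$ outside that closed interval either.
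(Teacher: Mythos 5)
Your proposal reproduces the scheme the paper sketches in Section \ref{section:proofs} (and which is carried out in \cite{Aus}): the approximate identity $\A_B = I-(I-e^{-r_B^2L})^m$, Theorem \ref{theor:small} with $(p_0,2)$ and $p_0$ slightly above $p_-$ to drop below $2$, Theorem \ref{theor:big} with $(2,q_0)$ and $q_0$ slightly below $p_+$ to climb above $2$, the contour split at $|z|\sim r_B^2$ against the full off-diagonal estimates of Proposition \ref{prop:sgfull}, and the McIntosh convergence argument for the passage from decaying to general bounded holomorphic $\varphi$. The sharpness argument via $\varphi(z)=e^{-z}$ and $\Int\widetilde\J(L)=\Int\J(L)$ is exactly the one the paper records in the remark following Theorem \ref{theor:B-K:weights}, so your proof matches the intended route.
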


The weighted version of this result is presented next. We mention
\cite{Ma1} where similar weighted estimates are proved under kernel upper
bounds assumptions.

\begin{theor}[\cite{AM3}]\label{theor:B-K:weights}
Let $w\in A_\infty$ be such that $\W_w\big(p_-(L),p_+(L)\big)\neq
\emptyset$. Let  $p\in \Int\J_{w}(L)$ and $\mu\in(\vartheta,\pi)$. For any $\varphi$ holomorphic   on $\Sigma_\mu$  satisfying \eqref{decay:varphi}, we have
\begin{equation}
\label{eq:fcw}
\|\varphi(L)f\|_{L^p(w)}
\le
C\,\|\varphi\|_{\infty}\,\|f\|_{L^p(w)},
\qquad
f\in L^\infty_{c},
\end{equation}
with $C$ independent of $\varphi$ and $f$. Hence, $L$ has a bounded holomorphic functional
calculus on $L^p(w)$.
\end{theor}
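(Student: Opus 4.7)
The plan is to invoke Theorem \ref{theor:big}(b) with $T = \varphi(L)$, applied in $(\re^{n}, dw)$, using an averaging family $\A_{B}$ built from the semigroup. Since $p \in \Int \J_{w}(L)$ and, by Proposition \ref{prop:sg-w:extension}(a), this interval contains $\W_{w}(p_{-}(L), p_{+}(L))$, I select $p_{0}, q_{0} \in \J_{w}(L)$ with $p_{0} < p < q_{0}$ and $w \in A_{p/p_{0}} \cap RH_{(q_{0}/p)'}$, so that $p \in \W_{w}(p_{0}, q_{0})$. The same proposition provides $L^{p_{0}}(w)$--$L^{q_{0}}(w)$ off-diagonal estimates on balls for $\{(zL)^{k} e^{-zL}\}_{z \in \Sigma_{\mu}}$ for every $k \ge 0$. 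An a priori bound $\varphi(L) : L^{p_{0}}(w) \to L^{p_{0}}(w)$, with norm allowed to depend on the decay parameter $s$ in \eqref{decay:varphi}, follows from \eqref{phi-L} and $|\eta_{\pm}(z)| \lesssim \min(1, |z|^{-s-1})$; this only serves to justify the manipulations below, which will yield the sharper $\|\varphi\|_{\infty}$ dependence.

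For each ball $B$ of radius $r_{B}$, set $t_{B} = r_{B}^{2}$ and define $\A_{B} = I - (I - e^{-t_{B} L})^{m}$ for an integer $m$ chosen large enough (depending on the doubling order of $dw$ and on $\theta_{2}$ in \eqref{w:off:C-B}). Expanding the binomial and using that $\varphi(L)$ commutes with the semigroup on $L^{2}$ gives
$$T \A_{B} f = \sum_{k=1}^{m} \binom{m}{k} (-1)^{k+1} e^{-k t_{B} L} (Tf).$$
Splitting $Tf = \sum_{j \ge 1} Tf \cdot \bigchi_{C_{j}(B)}$ and applying the $L^{p_{0}}(w)$--$L^{q_{0}}(w)$ off-diagonal estimate \eqref{w:off:C-B} to each semigroup operator with $t = k\, t_{B}$ then yields \eqref{big:T:A} with $\alpha_{j} \lesssim 2^{j\theta_{1}} \expt{-c\, 4^{j}}$ summable, uniformly in $B$ and in $\varphi$.

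The main obstacle is \eqref{big:T:I-A}, where one must exploit the cancellation in $T(I - \A_{B}) = \varphi(L)(I - e^{-t_{B} L})^{m}$. I would combine \eqref{phi-L} with
$$(I - e^{-t_{B} L})^{m} = \int_{[0, t_{B}]^{m}} L^{m}\, e^{-(s_{1} + \cdots + s_{m}) L} \, ds_{1} \cdots ds_{m}$$
to express $\varphi(L)(I - e^{-t_{B} L})^{m}$ as an integral of the operator $\tau^{-m} (\tau L)^{m} e^{-\tau L}$, with $\tau = z + s_{1} + \cdots + s_{m}$, against $\eta_{\pm}(z)$. A contour deformation of $\gamma_{\pm}$ in \eqref{phi-L:eta} produces the scale-invariant bound $|\eta_{\pm}(z)| \lesssim \|\varphi\|_{\infty} / |z|$, uniform in the decay parameter $s$. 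Splitting $f = \sum_{j \ge 1} f \bigchi_{C_{j}(B)}$, applying Minkowski, and using the $L^{p_{0}}(w)$--$L^{p_{0}}(w)$ off-diagonal estimates on balls for $\{(\tau L)^{m} e^{-\tau L}\}$ yields a Gaussian factor $\expt{-c\, 4^{j} r_{B}^{2}/\tau}$ per annulus; integrating against $|\eta_{\pm}(z)|/\tau^{m}$ over $\Gamma_{\pm} \times [0, t_{B}]^{m}$ produces $\alpha_{j} \lesssim \|\varphi\|_{\infty} \cdot 2^{j \theta_{1}} \expt{-c\, 4^{j}}$, provided $m$ is large enough for the $z$-integral to converge at the origin and to overcome the polynomial factor $\dec{2^{j} r_{B}/\sqrt{\tau}}^{\theta_{2}}$. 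Once both \eqref{big:T:I-A} and \eqref{big:T:A} are established with $\sum_{j} \alpha_{j} \lesssim \|\varphi\|_{\infty}$, Theorem \ref{theor:big}(b) delivers \eqref{eq:fcw}, and the full bounded holomorphic functional calculus on $L^{p}(w)$ then follows from the abstract theory of \cite{Mc} together with Remark \ref{remark:inf-gen}.
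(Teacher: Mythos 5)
Your overall strategy---taking $\A_B = I - (I - e^{-r_B^2 L})^m$ and feeding Theorem~\ref{theor:big} with the weighted off-diagonal estimates on balls from Proposition~\ref{prop:sg-w:extension}---is the right framework and agrees with the scheme sketched in Section~\ref{section:proofs}. Two things go wrong, however, and the second is a genuine gap.

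First, a bookkeeping confusion. If you really work in the space $(\re^n, dw)$ then you should apply Theorem~\ref{theor:big} part~$(a)$ (equivalently part~$(b)$ with the constant weight $1$) and conclude $L^p(dw)$ boundedness for every $p_0<p<q_0$; no Muckenhoupt condition on $w$ appears, because $w$ has been absorbed into the underlying measure. The condition ``$w\in A_{p/p_0}\cap RH_{(q_0/p)'}$'' that you impose is a condition with respect to $dx$, and if you insist on it with $p_0,q_0\in\J_w(L)$ it is not in general satisfiable for every $p\in\Int\J_w(L)$; it is only guaranteed when $p\in\W_w\big(p_-(L),p_+(L)\big)$, which may be a strictly smaller interval.

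Second, and more seriously, your one-step treatment of the local term $j=1$ in \eqref{big:T:I-A} does not close. You propose to estimate $\varphi(L)(I-e^{-t_B L})^m = \int_{\Gamma_\pm}\int_{[0,t_B]^m}\tau^{-m}(\tau L)^m e^{-\tau L}\,ds\,\eta_\pm(z)\,dz$ by applying the uniform $L^{p_0}(w)$ bound for $(\tau L)^m e^{-\tau L}$ and then integrating $|\eta_\pm(z)|\,\tau^{-m}$. With the scale-invariant bound $|\eta_\pm(z)|\lesssim\|\varphi\|_\infty/|z|$ the inner $s$-integral is $\int_{[0,t_B]^m}(|z|+\sum s_i)^{-m}ds\approx\log(t_B/|z|)$ for $|z|\ll t_B$, so the $z$-integral $\int_0^{t_B}|z|^{-1}\log(t_B/|z|)\,d|z|$ diverges. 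Raising $m$ cannot help: the factor $\tau^{-m}$ improves decay only for $|z|\gg t_B$, not at $z=0$. The only way to integrate the contour representation of $\psi_r(\zeta)=\varphi(\zeta)(1-e^{-r^2\zeta})^m$ near $z=0$ with a finite constant is to use the decay of $\psi_r$ at $\infty$, i.e.\ the decay of $\varphi$, and then the constant depends on $s$, not just $\|\varphi\|_\infty$. That is exactly the crude a priori bound you said you would not use.

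The paper avoids this by a two-stage bootstrap, which is the missing ingredient in your argument. In a first stage one applies Theorems~\ref{theor:small} and~\ref{theor:big} in $(\re^n,dx)$ with the unweighted full off-diagonal estimates of Proposition~\ref{prop:sgfull}; part~$(b)$ of Theorem~\ref{theor:big} then already gives the bounded holomorphic functional calculus on $L^{p_0}(w)$ with constant $\lesssim\|\varphi\|_\infty$ for all $p_0\in\W_w\big(p_-(L),p_+(L)\big)$. In the second stage one works in $(\re^n,dw)$ with the off-diagonal estimates on balls of Proposition~\ref{prop:sg-w:extension}; the local term $\big(\textaver{B}|\varphi(L)(I-e^{-t_BL})^m(f\bigchi_{4B})|^{p_0}dw\big)^{1/p_0}$ is then controlled directly by the first-stage $L^{p_0}(w)$ bound applied to $\psi_r(L)$, since $\|\psi_r\|_\infty\le 2^m\|\varphi\|_\infty$. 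Only the terms $j\ge 2$ are estimated through the Gaussian off-diagonal decay, and there your calculation is sound. Without this bootstrap the sharp $\|\varphi\|_\infty$-dependence on the local piece is not available, so the argument as written does not prove the theorem.
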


\begin{remark}\rm Fix $w\in A_{\infty}$  with $\W_w\big(p_-(L),p_+(L)\big)\neq
\emptyset$. If $1<p<\infty$ and  $L$ has a bounded holomorphic
functional calculus on $L^p(w)$, then $p\in \widetilde\J_{w}(L)$.
Indeed,  take $\varphi(z)=e^{-z}$. As
$\Int\widetilde\J_{w}(L)=\Int\J_{w}(L)$ by Proposition
\ref{prop:sgfull}, this shows that the range obtained in the
theorem is optimal up to endpoints. \end{remark}

\subsection{Riesz transforms}

The Riesz transforms associated to $L$ are $\partial_{j}L^{-1/2}$, $1\le j \le n$. Set   $\nabla L^{-1/2}=(\partial_{1}L^{-1/2}, \ldots, \partial_{n}L^{-1/2})$. The
solution of the Kato conjecture \cite{AHLMcT} implies that this operator extends boundedly to $L^2$.  This allows the representation
\begin{equation}
\label{eq:RT} \nabla L^{-1/2}f = \frac{1}{\sqrt \pi}\int_{0}^\infty
\sqrt t\,\nabla e^{-t\, L}f\, \frac{dt}{t},
\end{equation}
in which the integral converges  strongly in $L^2$ both at $0$ and
$\infty$ when $f\in L^2$. The $L^p$ estimates for this operator are characterized in \cite{Aus}.

\begin{theor}[\cite{Aus}]\label{theor:Riesz-Auscher}
 $p\in \Int\K(L)$ if and only if $\nabla L^{-1/2}$ is bounded on $L^p(\re^n)$. \end{theor}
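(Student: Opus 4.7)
For the sufficiency, assume $p\in \Int\K(L) = (q_-(L),q_+(L))$ and anchor on the $L^2$-boundedness of $\nabla L^{-1/2}$ provided by the solution of the Kato square root problem \cite{AHLMcT}. In both subcases $p<2$ and $p>2$, take
$$
\A_B := I - (I - e^{-r_B^2 L})^m, \qquad I - \A_B = (I - e^{-r_B^2 L})^m,
$$
for an integer $m$ to be chosen large depending on $n$. For $p<2$, I would apply Theorem \ref{theor:small} with $q_0 = 2$ and $p_0\in (q_-(L),p)$; since $q_-(L)=p_-(L)$ by Proposition \ref{prop:sgfull}(d), the semigroup $\{e^{-tL}\}$ enjoys $L^{p_0}-L^2$ off-diagonal bounds, so expanding $\A_B = \sum_{k=1}^m \binom{m}{k}(-1)^{k+1} e^{-kr_B^2 L}$ yields \eqref{small:A} with Gaussian-decay coefficients. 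For \eqref{small:T:I-A:variant}, insert \eqref{eq:RT} to rewrite
$$
\nabla L^{-1/2}(I-\A_B)f = \frac{1}{\sqrt\pi}\int_0^\infty \sqrt t\,\nabla e^{-tL}\,(I - e^{-r_B^2 L})^m f\,\frac{dt}{t},
$$
and split at $t = r_B^2$: for $t\le r_B^2$ factor the $m$ increments via $I-e^{-r_B^2 L} = \int_0^{r_B^2} L e^{-\tau L}\,d\tau$ to extract positive powers of $t/r_B^2$, while for $t\ge r_B^2$ use the raw decay $(r_B^2/t)^m$ from the $m$-fold cancellation. Combined with $L^{p_0}-L^{p_0}$ off-diagonal estimates for $\sqrt t\,\nabla e^{-tL}$ (valid since $p_0\in\K(L)$) and integration in $t$, one obtains annular bounds with coefficients summable against $2^{Dj}$ once $m$ is large, whence Theorem \ref{theor:small} yields the $L^p$-bound.

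The case $p>2$ is what I expect to be the main obstacle. Apply Theorem \ref{theor:big}(a) with $p_0=2$ and $q_0\in (p,q_+(L))$; hypothesis \eqref{big:T:I-A} follows from the same integral-representation argument, specialized to $L^2$ off-diagonal estimates. The delicate hypothesis is \eqref{big:T:A}. Writing $\A_B = \sum_k c_k e^{-kr_B^2 L}$ and commuting $L^{-1/2}$ past the semigroup,
$$
T\A_B f = \sum_{k=1}^m c_k\,\nabla e^{-kr_B^2 L} g,\qquad g := L^{-1/2}f,
$$
so that $\nabla g = Tf$. Since constants lie in $\ker L$, hence in $\ker e^{-tL}$, one may replace $g$ by $g - c_j$ for any constant $c_j$; the idea is to apply the $L^2-L^{q_0}$ off-diagonal estimates on balls for $\sqrt t\,\nabla e^{-tL}$ (Proposition \ref{prop:sgfull}(b), $q_0\in\K(L)$) annulus-by-annulus with $c_j$ chosen to be the mean of $g$ on $2^{j+1}B$, and close with the $L^2$ Poincar\'{e} inequality
$$
\Big(\aver{2^{j+1}B}|g-c_j|^2\,dx\Big)^{1/2} \lesssim 2^j\,r_B \Big(\aver{2^{j+1}B}|\nabla g|^2\,dx\Big)^{1/2}.
$$
The factor $2^j r_B$ exactly balances the $\sqrt t = r_B\sqrt k$ scaling and converts $L^2$ averages of $g$ into $L^2$ averages of $\nabla g = Tf$ on the enlarged balls; Gaussian decay in $j$ survives, giving summable $\alpha_j$, and the mismatches between the $c_j$ for distinct $j$ are absorbed by telescoping into lower-order summable errors. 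Theorem \ref{theor:big}(a) then delivers the $L^p$-bound.

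For the necessity, I would exploit the composition
$$
\sqrt t\,\nabla e^{-tL} = (\nabla L^{-1/2})\circ(\sqrt t\,L^{1/2}e^{-tL}),
$$
whose second factor is $\varphi_t(L)$ with $\varphi_t(z) = \sqrt{tz}\,e^{-tz}$, uniformly bounded holomorphic on any sector $\Sigma_\mu$ with $\mu>\vartheta$. Given $L^p$-boundedness of $\nabla L^{-1/2}$, a standard interpolation with the $L^2$-bound (plus openness of the boundedness range) places a neighborhood of $p$ inside $\Int\J(L)$, so Theorem \ref{theor:B-K:Aus} provides uniform $L^p$-boundedness of $\sqrt t\,L^{1/2}e^{-tL}$; composing exhibits $\sqrt t\,\nabla e^{-tL}$ as uniformly $L^p$-bounded, hence $p\in\widetilde\K(L)$. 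Since $\Int\widetilde\K(L)=\Int\K(L)=(q_-(L),q_+(L))$ by Proposition \ref{prop:sgfull}(b), we conclude $p\in\Int\K(L)$.
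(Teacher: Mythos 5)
Your sufficiency argument is essentially the right one and matches the method used in \cite{Aus} (and sketched in Section~\ref{section:proofs} of this paper): one takes $\A_B=I-(I-e^{-r_B^2L})^m$ and verifies the hypotheses of Theorems~\ref{theor:small} and~\ref{theor:big} using $L^{p_0}$--$L^{q_0}$ off-diagonal estimates for the semigroup and its gradient; the key device above $p=2$ is precisely the conservation property $e^{-tL}1=1$ (so that $\nabla e^{-tL}$ annihilates constants) combined with the Poincar\'e inequality on expanded balls to convert $L^2$-averages of $g=L^{-1/2}f$ into $L^2$-averages of $\nabla g=Tf$. Two small cautions on this side: (i) constants are \emph{fixed points} of $e^{-tL}$, not elements of its kernel, and it is this fixed-point property that makes $\nabla e^{-tL}c=0$; (ii) you must be careful about replacing $g$ by $g-c_j$ inside each truncation $\chi_{C_j(B)}$, because $\nabla e^{-tL}(\chi_{C_j(B)}c_j)\neq 0$ --- the standard cure is to subtract one global constant (the mean of $g$ on $B$, say) and control the drift of averages between scales by a telescoping Poincar\'e argument, absorbing the resulting $O(j)$ factor into the Gaussian decay of the off-diagonal coefficients.

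The necessity direction is where the proposal has a genuine gap. Your composition $\sqrt{t}\,\nabla e^{-tL}=\nabla L^{-1/2}\circ\big(\sqrt{t}\,L^{1/2}e^{-tL}\big)$ requires the factor $\varphi_t(L)=(tL)^{1/2}e^{-tL}$ to be uniformly bounded on $L^p$, and that is exactly what Theorem~\ref{theor:B-K:Aus} would deliver if $p\in\Int\J(L)$. But you have not established $p\in\Int\J(L)$ from the hypothesis. The cited reason --- ``a standard interpolation with the $L^2$-bound (plus openness of the boundedness range) places a neighborhood of $p$ inside $\Int\J(L)$'' --- does not work: interpolation of $\nabla L^{-1/2}$ between $L^p$ and $L^2$ gives boundedness of the \emph{Riesz transform} on intermediate $L^q$, which says nothing a priori about the semigroup's interval $\J(L)$, and ``openness of the boundedness range'' is itself an output of the theorem (or requires a separate argument), not an input. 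For $p<2$ this is outright circular, since $\Int\J(L)\cap(1,2)=\Int\K(L)\cap(1,2)$ by Proposition~\ref{prop:sgfull}(c), so assuming $p\in\Int\J(L)$ there already assumes the conclusion. Moreover, even granting uniform $L^p$-boundedness of $\sqrt{t}\,\nabla e^{-tL}$, this only places $p\in\widetilde\K(L)$, a possibly closed interval; passing to $p\in\Int\K(L)=\Int\widetilde\K(L)$ requires precisely the neighborhood statement your argument did not substantiate. The actual proof in \cite{Aus} of the ``only if'' direction is more involved than this one-line composition and does not reduce to it; you should not present the necessity as settled by the factorization alone.
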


In the weighted case we have the following analog.

\begin{theor}[\cite{AM3}]\label{theor:ext-RT}
Let $w\in A_\infty$ be such that $\W_w\big(q_-(L),q_+(L)\big)\neq
\emptyset$. For all $p\in
\Int \K_w(L)$ and $ f \in L^\infty_{c}$,
\begin{equation}
\label{eq:Riesz-w} \|\nabla L^{-1/2} f\|_{L^p(w)} \le
C\,
\|f\|_{L^p(w)}.
\end{equation}
Hence,  $\nabla L^{-1/2}$ has a bounded extension to  $L^p(w)$.
\end{theor}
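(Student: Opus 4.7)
\medskip

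\noindent\textbf{Proof plan.} The plan is to apply the weighted Calder\'{o}n-Zygmund machinery of Section \ref{section:CZ}, using as input the weighted off-diagonal estimates on balls for the gradient of the semigroup provided by Proposition \ref{prop:sg-w:extension}(b). Fix $p\in\Int\K_w(L)$ and, since $\Int\K_w(L)$ is an open interval, pick $p_0,q_0\in\K_w(L)$ with $p_0<p<q_0$. The idea is to invoke Theorem \ref{theor:big}(a) with the doubling measure $\mu=dw$ (doubling by Proposition \ref{prop:weights}(vii)) and exponents $(p_0,q_0)$: by Proposition \ref{prop:sg-w:extension}(b), the families $\{e^{-tL}\}_{t>0}$, $\{\sqrt t\,\nabla e^{-tL}\}_{t>0}$, and their complex-parameter extensions $\{(zL)^m e^{-zL}\}$, $\{\sqrt z\,\nabla(zL)^m e^{-zL}\}$ satisfy $L^{p_0}(w)$--$L^{q_0}(w)$ off-diagonal estimates on balls, which is exactly the input needed to verify the two hypotheses of Theorem \ref{theor:big}.

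For each ball $B$ of radius $r_B$, I take the standard approximating operator
\[
\A_B \;=\; I - (I-e^{-r_B^2 L})^m \;=\; \sum_{k=1}^m \binom{m}{k}(-1)^{k+1}\,e^{-k r_B^2 L},
\]
with $m$ an integer large enough to absorb the doubling order of $w$. To verify \eqref{big:T:I-A}, I use the subordination representation \eqref{eq:RT} to write
\[
T(I-\A_B)f \;=\; \tfrac{1}{\sqrt\pi}\int_0^\infty \sqrt t\,\nabla e^{-tL}\,(I-e^{-r_B^2 L})^m f\,\tfrac{dt}{t},
\]
and split at $t=r_B^2$. For $t\le r_B^2$, I rewrite $(I-e^{-r_B^2 L})^m = \int_{[0,r_B^2]^m}L^m e^{-(s_1+\cdots+s_m)L}\,ds_1\cdots ds_m$, so that the composed operator $\sqrt t\,\nabla L^m e^{-(t+s_1+\cdots+s_m)L}$ is controlled by the off-diagonal bounds on $\sqrt z\,\nabla(zL)^m e^{-zL}$, producing a gain of order $(t/r_B^2)^m$. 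For $t>r_B^2$, binomial expansion combined with the polynomial decay of $\sqrt t\,\nabla L^m e^{-tL}$ yields a gain of order $(r_B^2/t)^m$. In both regimes the Gaussian factors in \eqref{w:off:B-C}--\eqref{w:off:C-B} across the annuli $C_j(B)$ render the coefficients $\alpha_j$ summable once $m$ is chosen large.

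The verification of \eqref{big:T:A} is the delicate step. Using the functional-calculus commutation $L^{-1/2}e^{-sL}=e^{-sL}L^{-1/2}$, one writes $T\A_B f=\sum_k c_k\,\nabla e^{-k r_B^2 L}L^{-1/2}f$ and expands $L^{-1/2}f=\tfrac{1}{\sqrt\pi}\int_0^\infty e^{-uL}f\,du/\sqrt u$, so that $T\A_B f$ becomes an explicit integral of $\sqrt t\,\nabla e^{-tL}f$ over $t\ge k r_B^2$. Applying the weighted gradient off-diagonal bounds of Proposition \ref{prop:sg-w:extension}(b) leads to a bound in terms of local $L^{p_0}(w)$-averages of $|f|$; this is then reconciled with the form of \eqref{big:T:A} by appealing to the variant of Theorem \ref{theor:big} described in the remark after its statement, which permits an error term of the form $M(|f|^{p_0})(x)^{1/p_0}$. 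The $dw$-maximal operator being bounded on $L^{p/p_0}(dw)$, this error term is harmless.

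The main obstacle is precisely hypothesis \eqref{big:T:A}. Because $\nabla$ does not commute with $e^{-sL}$, the operator $T=\nabla L^{-1/2}$ does not commute with $\A_B$, so there is no direct way to transfer $T$ through $\A_B$ and produce $|Tf|$ on the right-hand side. The proof must therefore invoke the gradient off-diagonal estimates on balls, not merely those of the semigroup, which is the structural reason the boundedness range here is $\K_w(L)$ rather than the larger $\J_w(L)$; this mirrors the gap $(q_+(L))^*\le p_+(L)$ recorded in Proposition \ref{prop:sgfull}(d) in the unweighted setting.
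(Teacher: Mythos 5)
The overall architecture you chose --- working with respect to the doubling measure $dw$, picking $p_0,q_0\in\K_w(L)$ with $p_0<p<q_0$, taking $\A_B=I-(I-e^{-r_B^2L})^m$, and feeding the weighted off-diagonal estimates on balls from Proposition \ref{prop:sg-w:extension}(b) into Theorem \ref{theor:big}(a) --- is exactly the scheme described in Section \ref{section:proofs}. The verification you sketch for \eqref{big:T:I-A} is also essentially along the right lines (though the local piece, where $f$ is supported in $4B$, is normally handled by invoking the $L^{p_0}(w)$-boundedness of $T$ and $\A_B$ rather than by $t$-splitting).

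The gap is in your treatment of \eqref{big:T:A}. After you correctly write $T\A_Bf=\sum_kc_k\,\nabla e^{-kr_B^2L}L^{-1/2}f$ via the commutation of $L^{-1/2}$ with the semigroup, you then \emph{expand} $L^{-1/2}f$ as $\int_0^\infty e^{-uL}f\,du/\sqrt u$ and apply the gradient off-diagonal bounds termwise. This fails on two counts. First, putting absolute values inside the expansion produces a divergent integral: the resulting estimate is governed by $\int_{kr_B^2}^\infty (t(t-kr_B^2))^{-1/2}\,dt$, which diverges at $t\to\infty$ (the uniform $L^{p_0}(w)$--$L^{q_0}(w)$ bound carries no $t$-decay for large $t$, and the factor $(I-e^{-r_B^2L})^m$, whose cancellation kills the $t\to\infty$ singularity of $L^{-1/2}$, is no longer present once you pass to $\A_B$). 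Second---and this is the structural point---even if the bound were finite, it would land you in averages of $|f|$, which cannot be relegated to the error term $M(|f|^{p_0})^{1/p_0}$: the whole mechanism of Theorem \ref{theor:big} rests on comparing $T\A_Bf$ against averages of $|Tf|$, and an operator like $\nabla e^{-kr_B^2L}L^{-1/2}$ has a singular kernel at scale $r_B$ and is \emph{not} pointwise dominated by a maximal function of $f$.

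What the actual proof does is precisely the opposite of your last step: it does \emph{not} expand $L^{-1/2}f$. Setting $g=L^{-1/2}f$, so that $\nabla g=Tf$, one proves that for $s\sim r_B^2$,
$$
\Big(\aver{B}|\nabla e^{-sL}g|^{q_0}\,dw\Big)^{1/q_0}
\;\lesssim\;
\sum_{j\ge1}\alpha_j\Big(\aver{2^{j+1}B}|\nabla g|^{p_0}\,dw\Big)^{1/p_0},
$$
which is exactly \eqref{big:T:A}. This in turn is obtained by combining the off-diagonal estimates on balls for $\sqrt s\,\nabla e^{-sL}$ with two further ingredients you omit: the conservation property $e^{-sL}1=1$ of divergence-form operators (so that $\nabla e^{-sL}g=\nabla e^{-sL}(g-c)$ for any constant $c$), and a \emph{weighted Poincar\'e inequality}, which converts $\|g-c_j\|_{L^{p_0}(C_j(B),w)}$ into $2^jr_B\,\|\nabla g\|_{L^{p_0}(2^{j+1}B,w)}$. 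The factor $2^jr_B$ is compensated by $s^{-1/2}\sim r_B^{-1}$ and by the Gaussian off-diagonal decay in $j$. You correctly notice at the end that $\nabla$ fails to commute with $e^{-sL}$ and that one cannot directly transfer $T$ through $\A_B$; but the resolution is not to retreat into the error term---it is precisely this conservation-plus-Poincar\'e device that manufactures the missing $|Tf|$-averages on the right-hand side. Without it the argument does not close.
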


For a discussion on sharpness issues concerning this result, the reader is referred to \cite[Remark 5.5]{AM3}.

\subsection{Square functions}
We define the square functions for $x\in \RR^n$ and $f\in L^2$,
\begin{eqnarray*}
g_L f(x)
&=&
\Big(
\int_0^\infty |(t\,L)^{1/2}\,e^{-t\,L}f(x)|^2\,\frac{dt}{t}
\Big)^{\frac12},
\\[0.1cm]
G_L f(x)
&=&
\Big(
\int_0^\infty |\sqrt{t}\,\nabla e^{-t\,L}f(x)|^2\,\frac{dt}{t}
\Big)^{\frac12}.
\end{eqnarray*}
These square functions satisfy the following unweighted estimates.
\begin{theor}[\cite{Aus}]\label{theor:Aus-square}
\null\ \null
\begin{list}{$(\theenumi)$}{\usecounter{enumi}\leftmargin=.8cm
\labelwidth=0.7cm\itemsep=0.15cm
\renewcommand{\theenumi}{\alph{enumi}}}

\item If $p\in \Int \J(L)$ then for all $f\in
L^p\cap L^2$, $$
\|g_L f\|_{p}\sim \|f\|_{p}.
$$
Furthermore, this range is sharp up to endpoints.

\item  If $p\in \Int \K(L)$ then for all $f\in
L^p\cap L^2$,
$$
\|G_L f\|_{p}\sim \|f\|_{p}.
$$
Furthermore, this range is sharp up to endpoints.
\end{list}
\end{theor}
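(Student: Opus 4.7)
The plan is to establish both directions of the norm equivalence via the generalized Calder\'on-Zygmund criteria of Section \ref{section:CZ}, and to recover the reverse inequality by duality. The starting point is the $L^2$ identity $\|g_L f\|_2 = c\,\|f\|_2$, which follows from the bounded $H^\infty$-calculus of $L$ on $L^2$ applied to $\varphi_t(z) = (tz)^{1/2}e^{-tz}$ combined with Fubini. For $G_L$, the ellipticity of $A$ yields $\|\sqrt t\,\nabla e^{-tL} f\|_2 \sim \|(tL)^{1/2}e^{-tL}f\|_2$ (through $\int A\nabla u\cdot\overline{\nabla u}\,dx \sim \langle Lu,u\rangle$), hence $\|G_L f\|_2 \sim \|g_L f\|_2 \sim \|f\|_2$.

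For the upper bound $\|g_L f\|_p \lesssim \|f\|_p$ when $p > 2$ and $p \in \Int \J(L)$, I would apply Theorem \ref{theor:big} with $T=g_L$, $p_0=2$, $q_0$ slightly less than $p_+(L)$, and the approximating family $\A_B = I - (I - e^{-r_B^2 L})^m$ for an integer $m$ large relative to the dimension. To verify \eqref{big:T:I-A}, I split the $t$-integral defining $g_L(I-\A_B)f = g_L(I-e^{-r_B^2 L})^m f$ at $t = r_B^2$: for $t \le r_B^2$ I compose the $L^2$--$L^2$ off-diagonal bounds of $(tL)^{1/2}e^{-tL}$ with those of $(I-e^{-r_B^2 L})^m$; for $t\ge r_B^2$ I expand the binomial, exploit both the Gaussian off-diagonal decay of $e^{-sL}$ from Proposition \ref{prop:sgfull}(a) and the cancellation gain in $r_B^2/t$, then interchange norms by Minkowski. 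The condition \eqref{big:T:A} is verified with the same ingredients, additionally using the $L^2$--$L^{q_0}$ off-diagonal estimates of $\A_B$. For $p < 2$ with $p \in \Int \J(L)$, one applies Theorem \ref{theor:small} with $q_0=2$ and $p_0$ close to $p_-(L)$ in the same pattern. The statement for $G_L$ proceeds identically, replacing $\J(L)$ by $\K(L)$ and invoking Proposition \ref{prop:sgfull}(b).

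The reverse inequality $\|f\|_p \lesssim \|g_L f\|_p$ follows by polarization and duality. From the $L^2$ identity one obtains, for $f,g \in L^2$,
\[
\langle f,g\rangle = 4c^{-1}\int_0^\infty\!\int_{\RR^n}(tL)^{1/2}e^{-tL}f\,\overline{(tL^*)^{1/2}e^{-tL^*}g}\,dx\,\frac{dt}{t},
\]
and Cauchy--Schwarz in $t$ followed by H\"older in $x$ give $|\langle f,g\rangle| \lesssim \|g_L f\|_p\,\|g_{L^*}g\|_{p'}$. Since $p \in \Int \J(L)$ is equivalent to $p' \in \Int \J(L^*)$, the upper bound already proved for $L^*$ yields $\|g_{L^*}g\|_{p'} \lesssim \|g\|_{p'}$, and the lower bound follows by duality; the analogous argument for $G_L$ uses the sesquilinear form $\int A\nabla u\cdot \overline{\nabla v}\,dx$ in place of $\langle Lu,v\rangle$. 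Sharpness up to endpoints then comes from the observation that $\|g_L f\|_p \sim \|f\|_p$ forces uniform $L^p$-boundedness of the semigroup (hence $p \in \widetilde\J(L)$), together with $\Int\widetilde\J(L) = \Int\J(L)$ from Proposition \ref{prop:sgfull}(a). The main technical hurdle is the vector-valued verification of \eqref{big:T:I-A}: one must interchange the $L^2(dt/t)$ norm in $t$ with the $L^2$ average in $x$ by Minkowski while simultaneously tracking two distinct decay scales (the semigroup's Gaussian factor $\exp(-c\,4^j r_B^2/t)$ in the off-diagonal bound and the cancellation gain in $r_B^2/t$ from $(I-e^{-r_B^2 L})^m$), and obtaining coefficients summable against $2^{Dj}$ is precisely what forces $m$ to be chosen large.
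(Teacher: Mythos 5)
Your treatment of the direct bounds $\|g_Lf\|_p\lesssim\|f\|_p$ and $\|G_Lf\|_p\lesssim\|f\|_p$ via Theorems~\ref{theor:small} and \ref{theor:big} with $\A_B=I-(I-e^{-r_B^2L})^m$ and the full off-diagonal estimates of Proposition~\ref{prop:sgfull} matches the scheme the paper describes in Section~\ref{section:proofs}, and your duality argument for the reverse $g_L$ bound is the intended one: the remark following Theorem~\ref{theor:Aus-square} says exactly that the $\lesssim$ estimates for $L$ yield the reverse ones for $L^*$, and since $p\in\Int\J(L)\Leftrightarrow p'\in\Int\J(L^*)$ this covers the whole of $\Int\J(L)$.

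The gap is in the reverse $G_L$ bound. After polarization you get $|\langle f,g\rangle|\lesssim\|G_Lf\|_p\,\|G_{L^*}g\|_{p'}$ and then need $\|G_{L^*}g\|_{p'}\lesssim\|g\|_{p'}$, i.e.\ $p'\in\Int\K(L^*)$. Unlike $\J$, the interval $\K$ is not self-dual: $\Int\K(L^*)=(q_-(L^*),q_+(L^*))=\big((p_+(L))',q_+(L^*)\big)$, so your argument only covers $p\in\big((q_+(L^*))',\,p_+(L)\big)$. By Proposition~\ref{prop:sgfull}(c), $q_+(L^*)\le p_+(L^*)$, hence $(q_+(L^*))'\ge (p_+(L^*))'=p_-(L)$, and the inequality is strict whenever $q_+(L^*)<p_+(L^*)$ --- for instance for a real-coefficient $L$ in dimension $\ge 2$ one has $p_-(L)=1$, $p_+(L)=\infty$, yet $q_+(L^*)$ may be finite. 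Thus the portion $\big(p_-(L),(q_+(L^*))'\big)$ of $\Int\K(L)$ is missed. The true statement is in fact stronger: as Theorem~\ref{theor:reverse-square:weights}(b) with $w=1$ records, $\|f\|_p\lesssim\|G_Lf\|_p$ holds for \emph{all} $1<p<\infty$, and no argument resting on $L^{p'}$ boundedness of the vertical square function $G_{L^*}$ can reach this. The proof in \cite{Aus} replaces $G_{L^*}$ in the pairing by a conical (tent-space) square function, whose $L^q$ bounds for all $1<q<\infty$ rely only on $L^2$ off-diagonal decay of $\sqrt{t}\nabla e^{-tL^*}$; you need that ingredient or an equivalent one.

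A smaller point: the per-$t$ equivalence $\|\sqrt{t}\nabla e^{-tL}f\|_2\sim\|(tL)^{1/2}e^{-tL}f\|_2$ is the Kato square root estimate and does not follow from accretivity alone, since for complex $L$ one has $\|L^{1/2}u\|_2^2\ne\Re\langle Lu,u\rangle$ in general. The integrated statement $\|G_Lf\|_2\sim\|f\|_2$ that you actually need does follow from accretivity, by $\Re\langle Le^{-tL}f,e^{-tL}f\rangle\sim\|\nabla e^{-tL}f\|_2^2$ together with $\int_0^\infty\Re\langle Le^{-tL}f,e^{-tL}f\rangle\,dt=\tfrac12\|f\|_2^2$, without invoking $L^{1/2}$; you should make that distinction explicit.
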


In this statement, $\sim$ can be replaced by $\lesssim$: the square
function estimates for $L$ (with $\lesssim$) automatically imply the reverse
ones for $L^*$. The part concerning $g_{L}$ can be obtained using an abstract
result of  Le Merdy \cite{LeM} as a consequence of the bounded
holomorphic functional calculus on $L^p$.  The method in \cite{Aus}
is direct. We remind the reader that in \cite{Ste}, these inequalities for $L=-\Delta$ were proved differently and the boundedness  of $G_{-\Delta}$ follows from that of $g_{-\Delta}$ and of the Riesz transforms $\partial_{j}(-\Delta)^{-1/2}$ (or vice-versa) using the commutation between $\partial_{j}$ and $e^{t\, \Delta}$. Here, no such thing is possible.

We have the following weighted  estimates for square functions.

\begin{theor}[\cite{AM3}]\label{theor:square:weights}
Let $w\in A_\infty$.
\begin{list}{$(\theenumi)$}{\usecounter{enumi}\leftmargin=.8cm
\labelwidth=0.7cm\itemsep=0.15cm\topsep=.3cm
\renewcommand{\theenumi}{\alph{enumi}}}
\item[(a)] If $\W_w\big(\,p_-(L),p_+(L)\big)\neq \emptyset$ and $p\in \Int
\J_w(L)$ then  for
all $f\in L^\infty_{c}$ we have
$$
\|g_L f\|_{L^p(w)}\lesssim \|f\|_{L^p(w)}.
$$
\item[(b)] If $\W_w\big(\,q_-(L),q_+(L)\big)\neq \emptyset$ and $p\in \Int
\K_w(L)$ then  for
all $f\in L^\infty_{c}$ we have
$$
\|G_L f\|_{L^p(w)}\lesssim \|f\|_{L^p(w)}.
$$
\end{list}
\end{theor}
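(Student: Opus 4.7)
The plan is to invoke Theorem \ref{theor:big}(b), in its Banach-space-valued form, applied to the square functions $g_{L}$ and $G_{L}$ regarded as sublinear operators with values in $L^{2}(\RR^{+},dt/t)$: setting $T_{t}f(x)=(tL)^{1/2}e^{-tL}f(x)$ in (a) and $T_{t}f(x)=\sqrt{t}\,\nabla e^{-tL}f(x)$ in (b), one has $g_{L}f(x)=\|T_{\cdot}f(x)\|_{L^{2}(dt/t)}$ and similarly for $G_{L}$. In case (a), since $p\in\Int\J_{w}(L)\supseteq \W_{w}(p_{-},p_{+})$, we select $p_{0}\in(p_{-},p)$ and $q_{0}\in(p,p_{+})$ with $p\in \W_{w}(p_{0},q_{0})$; both exponents then lie in $\J(L)\cap\J_{w}(L)$, so Propositions \ref{prop:sgfull} and \ref{prop:sg-w:extension} supply the unweighted and weighted off-diagonal estimates for the semigroup and its analytic extension. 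In (b), one proceeds similarly within $\K_{w}(L)$, using the gradient family instead.

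For a ball $B$ of radius $r_{B}$, set
$\A_{B}=I-(I-e^{-r_{B}^{2}L})^{m}=\sum_{k=1}^{m}c_{k}\,e^{-k\,r_{B}^{2}L},$
with $m\in\NN$ to be chosen large. To verify \eqref{big:T:A}, observe that $\A_{B}$ commutes with every $T_{t}$, so $T_{t}(\A_{B}f)=\A_{B}(T_{t}f)$. As $p_{0},q_{0}\in\J_{w}(L)$, the polynomial $\A_{B}$ enjoys $L^{p_{0}}(w)$--$L^{q_{0}}(w)$ off-diagonal estimates on balls and annuli; applying them to $\A_{B}(T_{t}f)$ for each $t$ and pulling the $L^{2}(dt/t)$-norm through the weighted averages by the standard Fubini/Minkowski argument (reducible to the case $p_{0}\le 2\le q_{0}$, which can be arranged), one obtains
$$
\Big(\aver{B}(g_{L}\A_{B}f)^{q_{0}}\,dw\Big)^{1/q_{0}}
\lesssim \sum_{j\ge 1}\alpha_{j}\,\Big(\aver{2^{j+1}B}(g_{L}f)^{p_{0}}\,dw\Big)^{1/p_{0}},
$$
with Gaussian coefficients $\alpha_{j}\lesssim e^{-c\,4^{j}}$.

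The main obstacle is \eqref{big:T:I-A}, which hinges on the cancellation in $I-\A_{B}=(I-e^{-r_{B}^{2}L})^{m}$. Decompose $f=\sum_{j\ge 1}f\,\bigchi_{C_{j}(B)}$ and split the $t$-integral defining $g_{L}$ at $t=r_{B}^{2}$. For $t\le r_{B}^{2}$, expand $(I-e^{-r_{B}^{2}L})^{m}$ as a polynomial $\sum c_{k}e^{-k\,r_{B}^{2}L}$ and apply the $L^{p_{0}}$ off-diagonal estimates of each $(tL)^{1/2}e^{-(t+k\,r_{B}^{2})L}$: the separation $d(B,C_{j}(B))\sim 2^{j}r_{B}$ combined with the heat scaling $\sqrt{t+k\,r_{B}^{2}}\lesssim r_{B}$ furnishes a Gaussian factor $\exp(-c\,4^{j})$. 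For $t\ge r_{B}^{2}$, use the factorization $(I-e^{-r_{B}^{2}L})^{m}=(r_{B}^{2}L)^{m}\phi(r_{B}^{2}L)^{m}$ with $\phi(z)=(1-e^{-z})/z$ bounded and holomorphic in a sector; this yields
$(tL)^{1/2}e^{-tL}(I-e^{-r_{B}^{2}L})^{m}=(r_{B}^{2}/t)^{m}\,(tL)^{m+1/2}e^{-tL}\,\phi(r_{B}^{2}L)^{m},$
contributing a polynomial gain $(r_{B}^{2}/t)^{m}$ that makes $\int_{r_{B}^{2}}^{\infty}(r_{B}^{2}/t)^{2m}\,dt/t$ converge, together with off-diagonal estimates for $(tL)^{m+1/2}e^{-tL}$ (obtained from Proposition \ref{prop:sgfull} by factoring it as $(tL)^{1/2}e^{-tL/2}\cdot (tL)^{m}e^{-tL/2}$). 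Integrating in $t$, summing in $j$, and transferring to the weighted measure $dw$ (which costs a factor $2^{j\,D}$ with $D$ the doubling order of $w$) produces coefficients $\alpha_{j}$ with $\sum_{j}\alpha_{j}<\infty$, provided $m$ is chosen larger than $D$. This quantitative balance between the cancellation gain, the Gaussian decay in $j$, and the weight doubling is the main technical hurdle; once secured, Theorem \ref{theor:big}(b) delivers the desired $L^{p}(w)$-boundedness. Part (b) is proved along the same lines, replacing $(tL)^{1/2}e^{-tL}$ by $\sqrt{t}\,\nabla e^{-tL}$ and the intervals $\J,\J_{w}$ by $\K,\K_{w}$, invoking Proposition \ref{prop:sg-w:extension}(b) for the weighted off-diagonal estimates of the gradient family.
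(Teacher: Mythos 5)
Your scheme has the right ingredients (the choice $\A_B=I-(I-e^{-r_B^2L})^m$, the split of the $t$-integral at $t=r_B^2$, the expansion for small $t$ versus the sector-bounded function $\phi(z)=(1-e^{-z})/z$ for large $t$, and Propositions~\ref{prop:sgfull}, \ref{prop:sg-w:extension}), but it collapses what the paper organizes as a two-pass argument into a single pass, and this genuinely loses the claimed range. Choosing $p_0\in(p_-,p)$, $q_0\in(p,p_+)$ with $p\in\W_w(p_0,q_0)$ is possible precisely when $p\in\W_w(p_-,p_+)$, so your argument proves the estimate only on $\W_w(p_-,p_+)$, which in general is a proper subset of $\Int\J_w(L)$. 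The paper's scheme (Section~\ref{section:proofs}) first runs Theorems~\ref{theor:small} and~\ref{theor:big} with $\mu=dx$ and the unweighted off-diagonal estimates to obtain the unweighted theory plus the first weighted range $\W_w(p_-,p_+)$, and then, with $w$ fixed, runs them \emph{again} with $\mu=dw$ and the weighted off-diagonal estimates on balls of Proposition~\ref{prop:sg-w:extension}; the $L^{p_0}(w)$-boundedness needed for the local term of \eqref{big:T:I-A} in this second pass is supplied by the first pass. Without that bootstrap you do not reach $\Int\J_w(L)$. You also invoke Theorem~\ref{theor:big}(b) after verifying hypotheses written with $dw$-averages; having gone to $\mu=dw$, the conclusion you want is part (a), not (b).

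Two further steps are glossed over. First, your verification of \eqref{big:T:A} for $g_L$ rests on commuting $T_t$ with $\A_B$; this works for $(tL)^{1/2}e^{-tL}$ because both are functions of $L$, but for $G_L$ the family $T_t=\sqrt t\,\nabla e^{-tL}$ does \emph{not} commute with $\A_B=\sum_k c_k\,e^{-k r_B^2 L}$, since a gradient is present. The paper explicitly warns that ``there are technical difficulties depending on whether operators commute or not with the semigroup''; ``along the same lines'' is not an argument here, and the $G_L$ case needs a separate treatment. Second, the reduction ``$p_0\le 2\le q_0$, which can be arranged'' for the Fubini/Minkowski step is not always available: it requires $2\in\J_w(L)$, which can fail (e.g.\ if $r_w$ is large, $\W_w(p_-,p_+)$ may lie entirely to the right of $2$). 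The robust route, flagged in the paper right after Theorem~\ref{theor:big} (``all this extends to functions valued in a Banach space''), is to apply Theorems~\ref{theor:small} and~\ref{theor:big} directly in their $L^2(dt/t)$-valued form, using that a bounded linear map between scalar $L^p$ spaces extends with the same norm to Hilbert-space-valued $L^p$ spaces; this removes the constraint $p_0\le 2\le q_0$ altogether.
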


We also get reverse weighted square function estimates as follows.

\begin{theor}[\cite{AM3}]\label{theor:reverse-square:weights}
Let $w\in A_\infty$.
\begin{list}{$(\theenumi)$}{\usecounter{enumi}\leftmargin=.8cm
\labelwidth=0.7cm\itemsep=0.15cm\topsep=.2cm
\renewcommand{\theenumi}{\alph{enumi}}}

\item If $ \W_w\big(\,p_-(L),p_+(L)\big)\neq \emptyset$ and $p\in \Int\J_{w}(L)$ then
$$
\|f\|_{L^p(w)}\lesssim\|g_L f\|_{L^p(w)},
\qquad f\in L^p(w)\cap L^2.
$$

\item If  $r_{w}<p<\infty$,
$$
\|f\|_{L^p(w)}
\lesssim
\|G_L f\|_{L^p(w)},
\qquad f\in L^p(w)\cap L^2.
$$
\end{list}

\end{theor}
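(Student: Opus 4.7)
The plan for both parts is to polarise $\|f\|_{L^p(w)}$ against a test function $h$ in the predual $L^{p'}(w^{1-p'})$, express $\langle f,h\rangle$ through a Calder\'on-type reproducing identity tailored to the square function in question, bound it by a mixed integral of two square functions, and close by H\"older together with a direct weighted square function bound on the $L^*$ side.

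For (a) the natural reproducing formula is the spectral identity
\[
\langle f,h\rangle=2\int_0^\infty\bigl\langle (tL)^{1/2}e^{-tL}f,\,(tL^*)^{1/2}e^{-tL^*}h\bigr\rangle\,\frac{dt}{t},
\]
valid on $L^2$ because $\int_0^\infty z\,e^{-2tz}\,dt=\tfrac12$ for $\Re z>0$. Cauchy--Schwarz in $dt/t$ followed by Fubini yields $|\langle f,h\rangle|\le\int g_L f(x)\,g_{L^*}h(x)\,dx$, and weighted H\"older gives $|\langle f,h\rangle|\lesssim\|g_L f\|_{L^p(w)}\,\|g_{L^*}h\|_{L^{p'}(w^{1-p'})}$. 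Theorem~6.5(a) applied to $L^*$ with weight $w':=w^{1-p'}$ at exponent $p'$ handles the last factor, provided one verifies $\W_{w'}(p_-(L^*),p_+(L^*))\ne\emptyset$ and $p'\in\Int\J_{w'}(L^*)$. Both are pure duality facts: $p_{\pm}(L^*)=p_{\mp}(L)'$ because dual matrices have conjugate full off-diagonal ranges, $r_{w'}=(s_w)'$ and $s_{w'}=(r_w)'$, and uniform boundedness of $\{e^{-tL}\}$ on $L^p(w)$ is equivalent by H\"older to uniform boundedness of $\{e^{-tL^*}\}$ on $L^{p'}(w^{1-p'})$; thus $\Int\widetilde\J_w(L)$ and $\Int\widetilde\J_{w'}(L^*)$ are conjugate intervals, and Proposition~5.2 identifies these with $\Int\J_w(L)$ and $\Int\J_{w'}(L^*)$.

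For (b) the spectral formula is replaced by one coming from the divergence structure. Differentiating $t\mapsto\langle e^{-tL}f,e^{-tL^*}h\rangle$ and using $\langle Lu,v\rangle=\int A\nabla u\cdot\overline{\nabla v}\,dx$ gives $\partial_t\langle e^{-tL}f,e^{-tL^*}h\rangle=-2\int A\nabla e^{-tL}f\cdot\overline{\nabla e^{-tL^*}h}\,dx$; integrating from $0$ to $\infty$ on the dense subspace $\overline{R(L)}\cap L^2$ (the boundary term at infinity vanishing there) yields
\[
\langle f,h\rangle=2\int_0^\infty\!\!\int_{\RR^n}A(x)\nabla e^{-tL}f(x)\cdot\overline{\nabla e^{-tL^*}h(x)}\,dx\,dt.
\]
Ellipticity and Cauchy--Schwarz in $t$ bound the right-hand side by $2\Lambda\int G_Lf(x)\,G_{L^*}h(x)\,dx$, after which H\"older reduces everything to the weighted bound $\|G_{L^*}h\|_{L^{p'}(w^{1-p'})}\lesssim\|h\|_{L^{p'}(w^{1-p'})}$ for every $p'\in(1,(r_w)')$.

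The main obstacle is precisely this last reduction: the interval $(1,(r_w)')$ may properly contain $\Int\K_{w^{1-p'}}(L^*)$ (e.g.\ when $w$ is close to $A_1$), so Theorem~6.5(b) cannot be applied directly. My strategy is a tent-space detour: replace the vertical $G_{L^*}h$ by the conical Lusin-area variant $\widetilde S_{L^*}h$, rewrite the mixed $dx\,dt$-integral as a tent-space pairing, and use the Coifman--Meyer--Stein weighted duality $(T^{p}_2(w))^{*}\simeq T^{p'}_2(w^{1-p'})$, which is valid for all $1<p<\infty$ with $w\in A_p$. The weighted tent-space bound $\|\{\sqrt t\,\nabla e^{-tL^*}h\}\|_{T^{p'}_2(w^{1-p'})}\lesssim\|h\|_{L^{p'}(w^{1-p'})}$ is then obtained by applying the universal criterion of Theorem~3.4 to the conical square function associated with $\{\sqrt t\,\nabla e^{-tL^*}\}$, the gradient off-diagonal estimates of Proposition~5.2(b) providing the required inputs. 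It is the conical averaging that decouples the admissible range of $p'$ from $\K(L^*)$ and restores the full $A_{p'}$ range.
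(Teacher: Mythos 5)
Part (a) of your argument is correct and is exactly the polarisation/duality scheme the paper alludes to after Theorem~\ref{theor:Aus-square}: the spectral reproducing formula reduces the lower $g_L$ bound to the upper $g_{L^*}$ bound from Theorem~\ref{theor:square:weights}(a), and the bookkeeping $p_{\pm}(L^*)=p_{\mp}(L)'$, $w'=w^{1-p'}$ with $\W_{w'}(p_-(L^*),p_+(L^*))$ being the conjugate interval of $\W_w(p_-(L),p_+(L))$, together with $\Int\J_w=\Int\widetilde\J_w$ from Proposition~\ref{prop:sg-w:extension}, is right.

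For part (b), you have correctly spotted the genuine difficulty: the naive duality bound $|\langle f,h\rangle|\lesssim\|G_Lf\|_{L^p(w)}\,\|G_{L^*}h\|_{L^{p'}(w^{1-p'})}$ only closes when $p'\in\Int\K_{w^{1-p'}}(L^*)$, which is strictly smaller than $(1,(r_w)')$ in general. This is a real gap in the obvious approach, and your diagnosis is accurate. However, the tent-space detour you propose does not repair it. Once you rewrite the mixed integral as a pairing in $T^2_2$ and invoke $(T^p_2(w))^*\simeq T^{p'}_2(w^{1-p'})$, what appears on the $f$-side is $\|\mathcal A F\|_{L^p(w)}$ with $F(y,t)=\sqrt t\,\nabla e^{-tL}f(y)$, i.e. the \emph{conical} area function; the statement you need to prove involves the \emph{vertical} $G_Lf=\mathcal V F$. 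Thus you still need the comparison $\|\mathcal A F\|_{L^p(w)}\lesssim\|\mathcal V F\|_{L^p(w)}$, and this inequality is false for general $F$ when $p>2$ unweighted, and in the weighted setting requires $w\in A_{p/2}$ (for $p>2$; for $p<2$ it needs a reverse H\"older condition), which is strictly more than $w\in A_p$. In other words, the conical averaging that ``decouples'' the admissible range on the $h$-side reappears as an $A_{p/2}$ (or $RH$) restriction on the $f$-side, so you have traded the $\K_{w'}(L^*)$ obstruction for another one without recovering the full $r_w<p<\infty$ range. The actual mechanism in \cite{AM3} is different and avoids this conical-to-vertical passage; your sketch as it stands does not prove case (b).
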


\begin{remark}
\rm Let us observe that $\Int\J_{w}(L)$ is the sharp range,  up to endpoints, for
$\|g_{L}f\|_{L^p(w)} \sim \|f\|_{L^p(w)}$. Indeed, we have $g_{L}(e^{- t\, L} f) \le g_{L}f$  for all
$t>0$. Hence,  the equivalence implies the uniform
$L^p(w)$ boundedness of $e^{-t\, L}$, which implies $p\in
\widetilde\J_{w}(L)$ (see Proposition \ref{prop:sg-w:extension}). Actually,  $\Int\J_{w}(L)$ is also the sharp range up to endpoints for the inequality
$\|g_{L}f\|_{L^p(w)} \lesssim \|f\|_{L^p(w)}$. It suffices to adapt the interpolation procedure in \cite[Theorem 7.1, Step 7]{Aus}.
Similarly, this interpolation procedure also shows that  $\Int\K_{w}(L)$ is sharp up to endpoints for  $\|G_{L}f\|_{L^p(w)} \lesssim \|f\|_{L^p(w)}$.
\end{remark}

\section{About the proofs}\label{section:proofs}

They follow a  general scheme.  First,
we choose $\A_B=I-(I-e^{-r^2\,L})^m$ with  $r$ the radius of $B$ and $m\ge 1$ sufficiently
large and whose value changes in each situation.

A first application of  Theorem \ref{theor:small} and Theorem \ref{theor:big} yield
unweighted estimates, and weighted estimates in a first range. This requires to prove \eqref{small:T:I-A:variant} (or the stronger \eqref{small:T:I-A}), \eqref{small:A}, \eqref{big:T:I-A} and \eqref{big:T:A}  with  measure $dx$, using the full off-diagonal estimates  of Proposition \ref{prop:sgfull}.

Then, having fixed $w$, a second application of  Theorems \ref{theor:small} and Theorems \ref{theor:big} yield
 weighted estimates in the largest range. This requires to prove \eqref{small:T:I-A:variant} (or the stronger \eqref{small:T:I-A}), \eqref{small:A}, \eqref{big:T:I-A} and \eqref{big:T:A}  with  measure $dw$,  using the off-diagonal estimates  on balls of Proposition \ref {prop:sg-w:extension}.

 There are technical  difficulties depending on whether operators commute or not with the semigroup. Full details are in \cite{AM3}

\section{Further results}\label{section:further}

We present some additional results obtained in \cite{AM3}, \cite{AM4}, \cite{AM5}.

\subsection{Commutators}

Let $\mu$ be a doubling measure in $\re^n$. Let $b\in \BMO(\mu)$ (BMO
is for bounded mean oscillation), that is,
$$
\|b\|_{\BMO(\mu)}
=
\sup_B \aver{B} |b-b_B| d\mu
<\infty,
$$
where the supremum is taken over balls and $b_B$ stands for the
$\mu$-average of $b$ on $B$. When $d\mu=dx$ we simply write $\BMO$.
If $w\in A_\infty$ (so $dw$ is a
doubling measure) then the reverse H\"{o}lder property yields that
$\BMO(w)=\BMO$ with equivalent norms.

For $T$ a  sublinear operator, bounded in some  $L^{p_0}(\mu)$,
$1\le p_{0}\le \infty$, $b\in \BMO$,  $k\in \mathbb{N}$, we define
the $k$-th order commutator
$$
T_b^k f(x)=T\big((b(x)-b)^k\,f\big)(x),
\qquad f\in L^\infty_c(\mu),\ x\in \re^n.
$$
Note that $T_b^0=T$ and that
$T_b^k f(x)$
is well-defined almost everywhere  when $f\in L^\infty_c(\mu)$. If $T$ is  linear
it can be alternatively defined by
recurrence: the first order commutator is
$$
T_b^1f(x)=[b,T]f(x)= b(x)\,T f(x)- T(b\,f)(x)
$$
and for $k\ge 2$,  the $k$-th order commutator is given by
$T_b^k=[b,T_b^{k-1}]$.

\begin{theor}[\cite{AM1}]
Let $k\in \NN$ and $b\in \BMO(\mu)$.
\begin{list}{$(\theenumi)$}{\usecounter{enumi}\leftmargin=.8cm
\labelwidth=0.7cm\itemsep=0.15cm\topsep=.2cm
\renewcommand{\theenumi}{\alph{enumi}}}

\item  Assume the conditions of Theorem \ref{theor:small} with \eqref{small:T:I-A:variant}  replaced by the stronger condition \eqref{small:T:I-A}. Suppose that $T$ and $T_b^m$ for $m=1,\dots,k$ are bounded on $L^{q_0}(\mu)$ and that $\sum_j \alpha_j\,2^{D\,j}\,j^k<\infty$. Then for all $p_0<p<q_0$,
    $$
    \|T_b^k f\|_{L^p(\mu)} \le C\,  \|b\|_{\BMO(\mu)}^k\, \|f\|_{L^p(\mu)}.
    $$

\item Assume the conditions of Theorem \ref{theor:big}. If $\sum_{j} \alpha_j\,j^k<\infty$, then for all $p_0<p<q_0$, $w\in A_{\frac{p}{p_0}}\cap RH_{\left(\frac{q_0}{p}\right)'}$,
    $$
    \|T_b^k f\|_{L^p(w)}
    \le
    C\, \|b\|_{\BMO(\mu)}^k\,\|f\|_{L^p(w)}.
    $$
\end{list}
\end{theor}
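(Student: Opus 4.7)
The strategy is to apply Theorems~\ref{theor:small} and~\ref{theor:big} with $T$ replaced by $T_b^k$ (and the same auxiliary family $\{\A_B\}$), proceeding by induction on $k$. The base case $k=0$ is the corresponding original theorem. At step $k$ we aim to verify the hypotheses with a new annular sequence $\tilde\alpha_j\asymp \alpha_j\, j^k\,\|b\|_{\BMO(\mu)}^k$, whose summability---with the extra $j^k$---is precisely what the strengthened assumptions $\sum_j \alpha_j 2^{Dj} j^k <\infty$ in (a) and $\sum_j \alpha_j j^k<\infty$ in (b) provide.

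The pivotal algebraic step is the binomial expansion
$$
(b(x)-b(y))^k=\sum_{m=0}^k \binom{k}{m}(b(x)-b_B)^{k-m}(b_B-b(y))^m,
$$
which yields the pointwise identity
$$
T_b^k g(x)=\sum_{m=0}^k \binom{k}{m}(b(x)-b_B)^{k-m}\,T\bigl((b_B-b)^m g\bigr)(x).
$$
Applied with $g=(I-\A_B)f$ (for the analog of \eqref{small:T:I-A} / \eqref{big:T:I-A}) and, in (b), also with $g=\A_B f$ (for the analog of \eqref{big:T:A}), this decomposes $T_b^k$ into a finite sum of products: an outer factor $(b-b_B)^{k-m}$ on the ball or annulus where the average is taken, times $T$ applied to an inner function carrying the factor $(b_B-b)^m$.

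Each such term is controlled by combining three ingredients. First, H\"older's inequality with an exponent $s>1$ chosen so that $p_0 s<q_0$ separates the $\BMO$ factor from the $T$ factor. Second, John--Nirenberg together with the standard telescoping $|b_{2^{j+1}B}-b_B|\lesssim j\,\|b\|_{\BMO(\mu)}$ gives, for any finite $s\ge 1$,
$$
\Big(\aver{2^{j+1}B}|b-b_B|^{s(k-m)}\,d\mu\Big)^{\frac1s}\lesssim \bigl((j+1)\,\|b\|_{\BMO(\mu)}\bigr)^{k-m},
$$
which produces the $j^k$ factor in $\tilde\alpha_j$. Third, we need $L^{p_0 s}$-versions on annuli of \eqref{small:T:I-A}, \eqref{big:T:I-A} and \eqref{big:T:A}; these follow by interpolating the given $L^{p_0}$-hypotheses with the $L^{q_0}$-boundedness of $T$ (and, for $\A_B$, condition \eqref{small:A}), which also supplies the $L^{p_0 s}$-control on the inner factor $(b_B-b)^m(I-\A_B)f$ by another application of John--Nirenberg on $B$.

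The main obstacle is the verification of \eqref{big:T:A} for $T_b^k$ in part (b): the same expansion naturally produces on the right-hand side averages of $T_b^m f$ for all $0\le m\le k$, not only for $m=k$. Induction is the remedy: by the inductive hypothesis, the commutators $T_b^m$ with $m<k$ are already bounded on the relevant $L^p$ and $L^p(w)$ spaces, so the corresponding terms are absorbed into controlled error terms of type $M(|f|^{p_0})^{1/p_0}$, and the variant of Theorem~\ref{theor:big} that permits such errors (mentioned immediately after its statement; see also \cite[Theorem 3.13]{AM1}) applies. This closes the induction and delivers the claimed bounds with constants proportional to $\|b\|_{\BMO(\mu)}^k$.
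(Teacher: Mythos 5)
Your three main ingredients — the binomial identity, John--Nirenberg with the telescoping estimate $|b_{2^{j+1}B}-b_B|\lesssim j\,\|b\|_{\BMO(\mu)}$, H\"older with an auxiliary exponent $s$, and an induction on $k$ that exploits the error-term variant of Theorem~\ref{theor:big} — are indeed the ingredients used in \cite{AM1}. However, the overall plan, ``apply Theorems~\ref{theor:small} and~\ref{theor:big} with $T$ replaced by $T_b^k$ and the \emph{same} $\{\A_B\}$,'' has a genuine gap, and it is not the route of \cite{AM1}.

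The problem is the order of the operators. Your expansion produces, as inner factors, $T\bigl((b_B-b)^m(I-\A_B)f\bigr)$ and $T\bigl((b_B-b)^m\A_B f\bigr)$. These are \emph{not} of the form $T(I-\A_B)g$ or $T\A_B g$, which is what the hypotheses \eqref{small:T:I-A}, \eqref{big:T:I-A} and \eqref{big:T:A} control: the multiplication operator by $(b_B-b)^m$ does not commute with $\A_B$ in the intended applications (e.g.\ $\A_B=I-(I-e^{-r^2L})^M$). Your ``third ingredient'' does not address this; moreover the suggestion that John--Nirenberg on $B$ controls the $L^{p_0s}$-norm of the inner factor $(b_B-b)^m(I-\A_B)f$ fails because this function is not supported in $B$ (the operator $\A_B$ spreads supports). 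Commuting $\A_B$ past $(b_B-b)^m$ produces an additional commutator term $[\A_B,(b_B-b)^m]$ for which the abstract hypotheses give no information. A secondary inaccuracy: $L^p(w)$-boundedness of $T_b^m$ for $m<k$ does not absorb such terms into errors of type $M(|f|^{p_0})^{1/p_0}$; the error one actually produces is of type $M(|T_b^m f|^{p_0})^{1/p_0}$, which is then handled by the inductive hypothesis at the \emph{norm} level, not pointwise.

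What \cite{AM1} does instead is not a substitution $T\to T_b^k$ but a reworking of the good-$\lambda$ (resp.\ Calder\'on--Zygmund) argument for the commutator, built on the identity
$$
T_b^k f(x)=\sum_{m=0}^k\binom{k}{m}\,(b(x)-b_B)^m\,T\bigl((b_B-b)^{k-m}f\bigr)(x),\qquad x\in B,
$$
so that the $b$-factor sits \emph{inside} $T$. The hypotheses \eqref{big:T:I-A}, \eqref{big:T:A} (and their $L^{p_0s}$-interpolated versions) can then be applied legitimately to $g=(b_B-b)^{l}f$, since they hold for arbitrary $g\in L^\infty_c(\mu)$, and the quantity $Tg$ appearing on the right of \eqref{big:T:A} is rewritten as $\sum_{i\le l}\binom{l}{i}(b_B-b)^{l-i}\,T_b^i f$; on $2^{j+1}B$ this is estimated by H\"older and John--Nirenberg, giving the $j^k$ loss and lower-order commutators that your induction closes. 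Thus the decomposition to run through $I-\A_B$ and $\A_B$ is that of $T\bigl((b_B-b)^k f\bigr)$, not that of $T_b^k f$ directly. Fixing this ordering would make your outline match the argument in \cite{AM1}.
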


With these results in hand, we obtain weighted estimates for the commutators of the previous operators.

\begin{theor}[\cite{AM3}]\label{theor:appl:comm}
Let $w\in A_\infty$, $k\in \NN$ and $b\in \BMO$. Assume one of the
following conditions:
\begin{list}{$(\theenumi)$}{\usecounter{enumi}\leftmargin=.8cm
\labelwidth=0.7cm\itemsep=0.15cm\topsep=.2cm
\renewcommand{\theenumi}{\alph{enumi}}}

\item $T=\varphi(L)$ with $\varphi$   bounded holomorphic
on $\Sigma_\mu$,
$\W_w\big(p_-(L),p_+(L)\big)\neq\emptyset$ and $p\in\Int \J_w(L)$.

\item $T=\nabla\,L^{-1/2}$,
$\W_w\big(q_-(L),q_+(L)\big)\neq\emptyset$ and $p\in\Int \K_w(L)$.

\item $T=g_L$, $\W_w\big(p_-(L),p_+(L)\big)\neq\emptyset$ and
$p\in\Int \J_w(L)$.

\item $T=G_L$, $\W_w\big(q_-(L),q_+(L)\big)\neq\emptyset$ and
$p\in\Int \K_w(L)$.
\end{list}
Then, for every for $ f\in L_c^\infty(\re^n)$, we have
$$
\|T_b^k f\|_{L^p(w)}
\le C\, \|b\|_{\BMO}^k\,\|f\|_{L^p(w)},
$$
where $C$ does not depend on $f$, $b$, and  is proportional to $\|\varphi\|_{\infty}$ in case $(a)$.
\end{theor}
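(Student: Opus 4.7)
The strategy is to apply the commutator extension of Theorem \ref{theor:big}(b) (stated earlier in the excerpt) in the doubling setting $(\re^n, dw)$, following the scheme of Section \ref{section:proofs}. Fix $p$ and $w$ in the stated range. Choose $p_0, q_0$ with $p_0 < p < q_0$ lying in $\J_w(L)$ for cases $(a),(c)$, and in $\K_w(L)$ for cases $(b),(d)$; this is possible since $p \in \Int \J_w(L)$ or $p \in \Int \K_w(L)$. Proposition \ref{prop:sg-w:extension} then provides $L^{p_0}(w) - L^{q_0}(w)$ off-diagonal estimates on balls for $\{e^{-t L}\}_{t > 0}$, respectively $\{\sqrt{t}\,\nabla e^{-t L}\}_{t > 0}$, together with their analytic extensions, plus uniform $L^{p_0}(w)$ boundedness. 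Take $\A_B = I - (I - e^{-r(B)^2 L})^m$ with $m$ to be fixed large. Since $\BMO(w) = \BMO$ for $w \in A_\infty$, it suffices to verify the hypotheses \eqref{big:T:I-A}, \eqref{big:T:A} of Theorem \ref{theor:big} \emph{with respect to the measure $dw$}, the trivial weight on that measure space, the chosen $p_0, q_0$, and coefficients $\alpha_j$ satisfying the reinforced summability $\sum_j \alpha_j\, j^k < \infty$.

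\textbf{Key verifications.} Expand $I - \A_B = \sum_{\ell=1}^m (-1)^{\ell+1} \binom{m}{\ell} e^{-\ell r^2 L}$; then \eqref{big:T:I-A} reduces to annular $L^{p_0}(w)$ estimates for $T\, e^{-\ell r^2 L}(\bigchi_{C_j(B)} f)$. For $(a)$, we rewrite $\varphi(L)\, e^{-\ell r^2 L}$ via the contour representation \eqref{phi-L} applied to $z \mapsto \varphi(z)e^{-\ell r^2 z}$ and use the weighted off-diagonal estimates on balls, absorbing the resulting $\|\varphi\|_\infty$ constant. For $(c)$ and $(d)$, Minkowski's integral inequality under the $L^2(dt/t)$-norm transfers the off-diagonal estimates of $(tL)^{1/2}e^{-tL}$, respectively $\sqrt{t}\,\nabla e^{-tL}$, into off-diagonal estimates for the square functions themselves, so one applies the same expansion. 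For $(b)$ one uses \eqref{eq:RT} to write $\nabla L^{-1/2} e^{-\ell r^2 L} = \frac{1}{\sqrt\pi}\int_0^\infty \sqrt{t}\,\nabla e^{-(t + \ell r^2)L}\, \frac{dt}{t}$ and integrates the weighted gradient off-diagonal estimates in $t$, splitting at $t = r^2$. Condition \eqref{big:T:A} is obtained from the same ingredients combined with the uniform $L^{p_0}(w)$ boundedness of $\A_B$ (itself a linear combination of semigroup operators) and off-diagonal bounds linking $T\A_B f$ back to $Tf$.

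\textbf{Summability and main obstacle.} The annular coefficients produced by the weighted off-diagonal estimates are of the form $2^{j \theta_1} \dec{2^j r / \sqrt{t}}^{\theta_2}\exp(-c\, 4^j r^2 / t)$, which, after integration against $dt/t$ for square functions and after finite summation in $\ell$ for the functional calculus case, yield $\alpha_j$ with super-polynomial (in fact Gaussian-type in $j$) decay. Choosing $m$ large enough so that the polynomial pre-factors produced by the composition of off-diagonal estimates are dominated then guarantees $\sum_j \alpha_j\, j^k < \infty$ for any prescribed $k$. The main obstacle lies in cases $(b)$ and $(d)$: because the gradient does not commute with the semigroup and because the relevant weighted off-diagonal estimates live only in the smaller interval $\Int \K_w(L)$, the verification of \eqref{big:T:A} requires splitting the time integral in \eqref{eq:RT} (or in the defining integral of $G_L$) at the scale $r^2$, treating the small-$t$ part by weighted gradient off-diagonal bounds and the large-$t$ part by reducing to a functional calculus estimate via composition, while carefully tracking enough Gaussian decay to absorb the extra factor $j^k$. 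Once these estimates are in place, Theorem \ref{theor:big}(b) applied with $\mu = dw$ and trivial weight delivers the commutator bound, with the constant proportional to $\|\varphi\|_\infty$ in case $(a)$ by construction.
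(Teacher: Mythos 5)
Your overall strategy is right---apply the commutator variant of Theorem~\ref{theor:big}\,(b) with underlying measure $dw$, pick $p_0<p<q_0$ inside $\Int\J_w(L)$ (resp.\ $\Int\K_w(L)$) with $w\in A_{p/p_0}\cap RH_{(q_0/p)'}$, invoke the weighted off-diagonal estimates on balls from Proposition~\ref{prop:sg-w:extension}, and take $\A_B=I-(I-e^{-r^2L})^m$---but the key verification step for \eqref{big:T:I-A} is not correct as written.

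First, a bookkeeping slip: $\sum_{\ell=1}^m(-1)^{\ell+1}\binom{m}{\ell}e^{-\ell r^2L}$ is $\A_B$, not $I-\A_B$; the latter is $(I-e^{-r^2L})^m=\sum_{\ell=0}^m(-1)^\ell\binom{m}{\ell}e^{-\ell r^2L}$, whose $\ell=0$ term is the identity. More substantially, expanding $(I-e^{-r^2L})^m$ by the binomial theorem and estimating each $T\,e^{-\ell r^2L}$ separately cannot establish \eqref{big:T:I-A}. The $\ell=0$ term is literally $T$ acting on $\bigchi_{C_j(B)}f$, which for $T=\varphi(L)$ with $\varphi$ merely bounded has no off-diagonal decay whatsoever, and even for $\ell\ge1$ the operator $\varphi(L)e^{-\ell r^2L}$ has no useful gain: $L^{p_0}(w)$-boundedness alone yields $\alpha_j\sim 2^{jD/p_0}$, which is not summable, let alone with the extra factor $j^k$. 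The whole point of the choice $\A_B=I-(I-e^{-r^2L})^m$ is that $(1-e^{-r^2z})^m$ vanishes to order $m$ at $z=0$, and this cancellation must be retained. In case (a) one works with the contour representation \eqref{phi-L}--\eqref{phi-L:eta} applied to $\psi_r(z)=\varphi(z)(1-e^{-r^2z})^m$, whose density $\eta_\pm$ gains a factor $\min\big(1,(r^2/|z|)^m\big)$; combined with the Gaussian off-diagonal decay of $e^{-zL}$, this produces $\alpha_j$ decaying rapidly in $j$ (with $m$ taken large relative to $D$ and $k$), and the constants depend only on $\|\varphi\|_\infty$. In cases (b)--(d) the factor $(I-e^{-r^2L})^m$ is kept intact inside the time integral so that, for instance, $\sqrt t\,\nabla e^{-tL}(I-e^{-r^2L})^m$ gains $(r^2/t)^m$ for $t\gg r^2$ before you split the integral at $t\sim r^2$. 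By contrast, the binomial expansion of $\A_B$ is the right tool for \eqref{big:T:A}, where one needs $\A_B$ itself to transport $Tf$ from annuli to $B$ with good decay (using commutativity of $\A_B$ with $\varphi(L)$ and $g_L$ in cases (a),(c), and the weakened form of \eqref{big:T:A} with an additional error term $M(|f|^{p_0})^{1/p_0}$ in the non-commuting cases (b),(d)). In short, you placed the expansion where it is useless and were too brief where it is essential; as written, the verification of \eqref{big:T:I-A} does not close.
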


Let us mention that, under kernel upper bounds assumptions,
unweighted estimates for commutators in case $(a)$ are
obtained in  \cite{DY1}.

\subsection{Reverse inequalities for square roots}

The method described above  can be used to consider estimates opposite to \eqref{eq:Riesz-w}. In the unweighted case, \cite{Aus} shows that if  $f\in\cals$ and $p$ is such that $\max\big\{1,\frac{n\,p_{-}(L)}{n+p_{-}(L)}\big\}<p<  p_+(L)$, then
$$
\|L^{1/2}f\|_{p} \lesssim \|\nabla f\|_{p}.
$$
The weighted counterpart of this estimate is considered in \cite{AM3}. If  $w\in A_\infty$ with $\W_w\big(\,p_{-}(L),p_{+}(L)\big)\ne
\emptyset$, then
\begin{equation}
\label{eq:reverseRiesz:w}
\|L^{1/2}f\|_{L^p(w)} \lesssim \|\nabla f\|_{L^p(w)},
\qquad f\in\cals,
\end{equation}
for all  $p$ such that
$\max\big\{\,r_{w}\,,\,
\frac{n\, r_{w}\, \widehat p_{-}(L)}{\, n\, r_{w}+ \widehat p_{-}(L)}\big\}<p< \widehat p_+(L)$, where $r_{w}=\inf\{r\ge 1\ : \ w \in A_{r} \}$, and $\widehat p_{-}(L)$, $\widehat p_{+}(L)$ are the endpoints of $\J_w(L)$, that is,  $\big(\widehat p_{-}(L), \widehat p_{+}(L)\big)=\Int \J_w(L)$.

Define $\dot W^{1,p}(w)$ as the completion of $\cals$ under
the semi-norm $\|\nabla f\|_{L^p(w)}$. Arguing as in \cite{AT} (see
\cite{Aus}) combining Theorem   \ref{theor:ext-RT}  and \eqref{eq:reverseRiesz:w}, it follows that $L^{1/2}$
extends to an isomorphism from $\dot W^{1,p}(w)$ into $L^p(w)$
for all $p\in \Int\K_{w}(L)$ with $p>r_{w}$, provided $\W_w\big(\,q_{-}(L),q_{+}(L)\big)\ne \emptyset$.

\subsection{Vector-valued estimates}
In \cite{AM1}, by using an extrapolation result ``\`a la Rubio de Francia'' for the classes of weights $A_{\frac{p}{p_0}}\cap
RH_{\left(\frac{q_0}{p}\right)'}$, it follows automatically from
Theorem \ref{theor:big}, part $(b)$, that for every $p_{0}<p,r<q_{0}$  and $w\in A_{\frac{p}{p_0}}\cap
RH_{\left(\frac{q_0}{p}\right)'}$, one has
\begin{equation}\label{T:v-v}
\Big\|
\Big(\sum_k |T f_k|^r\Big)^{\frac1r}
\Big\|_{L^p(w)}
\lesssim
C\,\Big\|
\Big(\sum_k | f_k|^r\Big)^{\frac1r}
\Big\|_{L^p(w)}.
\end{equation}

As a consequence, one can show weighted vector-valued estimates for the previous operators (see \cite{AM3} for more details). Given $w\in A_\infty$, we have
\begin{list}{$\bullet$}{\leftmargin=.6cm
\labelwidth=0.7cm\itemsep=0.15cm\topsep=.3cm}

\item If $\W_w\big(\,p_{-}(L),p_{+}(L)\big)\ne
\emptyset$, and  $T=\varphi(L)$ ($\varphi$ bounded holomorphic in an appropriate sector) or $T=g_L$ then \eqref{T:v-v} holds for all $p,r\in\Int\J_w(L)$

\item If $\W_w\big(\,q_{-}(L),q_{+}(L)\big)\ne
\emptyset$, and  $T=\nabla L^{-1/2}$ or  $T=G_L$ then \eqref{T:v-v} holds for all $p,r\in\Int\J_w(L)\cap (r_w,\infty)$.
\end{list}

\subsection{Maximal regularity}

Other vector-valued inequalities of interest are
\begin{equation}\label{eq:MR}
\Big\|
\Big(\sum_{1\le k\le N} |e^{-\zeta_{k}L} f_k|^2\Big)^{\frac12}
\Big\|_{L^q(w)}
\le
C\,\Big\|
\Big(\sum_{1\le k\le N} | f_k|^2\Big)^{\frac12}
\Big\|_{L^q(w)}
\end{equation}
for  $\zeta_{k}\in \Sigma_{\alpha}$ with $0<\alpha<\pi/2-\vartheta$  and $f_{k}\in L^p(w)$ with a constant $C$ independent of $N$, the choice of the $\zeta_{k}$'s and the $f_{k}$'s.  We restrict to $1<q<\infty$ and $w \in  A_{\infty}$. By \cite[Theorem 4.2]{W}, we know that the existence of such a constant is  equivalent to the maximal $L^p$-regularity of  the generator $-A$ of $e^{-tL}$ on $L^q(w)$ with one/all $1<p<\infty$, that is the existence of a constant $C'$ such that for all $f\in {L^p((0,\infty), L^q(w))}$ the solution $u$ of the parabolic problem on $\RR^n\times (0,\infty)$,
$$
u'(t) +Au(t)=f(t), \qquad u(0)=0,
$$
with
$$\|u'\|_{L^p((0,\infty), L^q(w))}  + \|Au\|_{L^p((0,\infty), L^q(w))} \le C'\,\|f\|_{L^p((0,\infty), L^q(w))}.
$$

\begin{prop}[\cite{AM3}]
Let $w\in A_\infty$ be such that $\W_w\big(p_-(L),p_+(L)\big)\neq
\emptyset$. Then for any $q\in \Int\J_{w}(L)$, \eqref{eq:MR} holds with $C=C_{q,w, L}$ independent of
$N$, $\zeta_{k}$,$f_{k}$.
\end{prop}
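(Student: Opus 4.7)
The plan is to combine Proposition~\ref{prop:sg-w:extension}(a), which provides uniform boundedness and off-diagonal estimates on balls for the whole sectorial family $\{e^{-\zeta L}\}_{\zeta\in\Sigma_\alpha}$, with a Rubio de Francia extrapolation for the classes $A_{p/p_0}\cap RH_{(q_0/p)'}$, exactly as Section~\ref{section:further} deduces \eqref{T:v-v} from Theorem~\ref{theor:big}(b). The crucial structural feature of that extrapolation is that its constant depends on the weight only through its weight characteristics; it therefore applies uniformly to every member of the family and promotes a scalar uniform bound into a vector-valued bound for any choice of $(\zeta_k)\subset\Sigma_\alpha$.

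First, I would pick $p_-(L)<p_0<q<q_0<p_+(L)$ with $p_0<2<q_0$ and $w\in A_{q/p_0}\cap RH_{(q_0/q)'}$; such a choice exists because $q\in\Int\J_w(L)$, $\W_w\big(p_-(L),p_+(L)\big)\neq\emptyset$, and $2\in\Int\J(L)=(p_-(L),p_+(L))$. Then for every $p\in(p_0,q_0)$ and every $u\in A_{p/p_0}\cap RH_{(q_0/p)'}$ one has $p\in\W_u(p_0,q_0)\subset\Int\J_u(L)$, and Proposition~\ref{prop:sg-w:extension}(a) gives
$$
\sup_{\zeta\in\Sigma_\alpha}\|e^{-\zeta L}f\|_{L^p(u)}\le C_u\,\|f\|_{L^p(u)},
$$
where $C_u$ is independent of $\zeta$ and depends on $u$ only through $[u]_{A_{p/p_0}}$ and $[u]_{RH_{(q_0/p)'}}$. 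If one prefers to avoid using Proposition~\ref{prop:sg-w:extension} as a black box, the same scalar inequality can be produced by applying Theorem~\ref{theor:big}(b) to each $e^{-\zeta L}$ with $\A_B=I-(I-e^{-r^2L})^m$ and $m$ large, following the general scheme sketched in Section~\ref{section:proofs}.

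Invoking the Rubio de Francia extrapolation now delivers, for every $p_0<p,r<q_0$ and every $u\in A_{p/p_0}\cap RH_{(q_0/p)'}$,
$$
\Big\|\Big(\sum_{1\le k\le N}|e^{-\zeta_k L}f_k|^r\Big)^{\frac1r}\Big\|_{L^p(u)}\le C\,\Big\|\Big(\sum_{1\le k\le N}|f_k|^r\Big)^{\frac1r}\Big\|_{L^p(u)},
$$
with $C$ independent of $N$ and of the $\zeta_k\in\Sigma_\alpha$; specializing to $p=q$, $r=2$, $u=w$ yields \eqref{eq:MR}. The only delicate point is to verify that the extrapolation constant depends on $u$ only through its weight characteristics, so that the same constant works simultaneously for every $\zeta_k$; this is the standard behaviour of the limited-range extrapolation developed in \cite{AM1}, and no new ingredient is needed beyond threading Proposition~\ref{prop:sg-w:extension}(a) through it, in exact analogy with the vector-valued statement for $\varphi(L)$ listed just above this proposition.
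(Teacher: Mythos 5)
Your argument is internally coherent on its main mechanism: you correctly observe that the limited-range extrapolation of \cite{AM1}/\cite{CMP} acts on families of pairs, so feeding in $\F=\{(|f|,|e^{-\zeta L}f|)\,:\,\zeta\in\Sigma_\alpha,\ f\in L^\infty_c\}$ legitimately produces a vector-valued bound with distinct $\zeta_k$'s and a constant depending only on the uniform scalar constant. The flaw is in the range. Your preparatory step requires finding $p_-(L)<p_0<q<q_0<p_+(L)$ with $w\in A_{q/p_0}\cap RH_{(q_0/q)'}$; taking the union over all admissible $(p_0,q_0)$, the set of $q$'s for which this is possible is exactly $\W_w\big(p_-(L),p_+(L)\big)$. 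The Proposition, however, claims the estimate for all $q\in\Int\J_w(L)$, and Proposition~\ref{prop:sg-w:extension}(a) only gives the containment $\W_w\big(p_-(L),p_+(L)\big)\subset\Int\J_w(L)$; the discussion following it explicitly leaves open whether the two intervals share endpoints for complex $L$. Your assertion ``such a choice exists because $q\in\Int\J_w(L)$'' is therefore exactly what fails when $q\in\Int\J_w(L)\setminus\W_w\big(p_-(L),p_+(L)\big)$: for such $q$ you have a scalar bound on $L^q(w)$ for the single weight $w$ only, which is not the hypothesis of the Rubio de Francia machinery run over $dx$.

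The paper's proof, as indicated in the remark after the Proposition, closes this by applying $\ell^2$-valued versions of Theorems~\ref{theor:small} and~\ref{theor:big} directly with the doubling measure $dw$, using the \emph{weighted} off-diagonal estimates on balls of Proposition~\ref{prop:sg-w:extension}(a); this two-stage scheme (unweighted, then over $dw$) captures the full interval $\Int\J_w(L)$, with extrapolation still used but over $dw$ rather than $dx$. The alternative mentioned — Kalton--Weis combined with the bounded holomorphic functional calculus on $L^q(w)$ from Theorem~\ref{theor:B-K:weights} — likewise runs over all of $\Int\J_w(L)$. Your argument is a correct proof when $\Int\J_w(L)=\W_w\big(p_-(L),p_+(L)\big)$ (for example for real-coefficient $L$), but it does not establish the statement in the stated generality.
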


This result follows from an abstract result of Kalton-Weis
\cite[Theorem 5.3]{KW}  together with   the bounded holomorphic
functional calculus of $L$ on  those $L^q(w)$  that we established
in Theorem  \ref{theor:B-K:weights}. However, the proof in \cite{AM3} uses extrapolation and $\ell^2$-valued versions of Theorems \ref{theor:small} and \ref{theor:big}. Note
that $q=2$ may not be contained in $\Int\J_{w}(L)$ and the
interpolation method of \cite{BK2} may not work here.

\subsection{Fractional operators}

The fractional operators associated with $L$ are formally given by, for
$\alpha>0$,
$$
L^{-\alpha/2}
=
\frac1{\Gamma(\alpha/2)}\,\int_0^\infty
t^{\alpha/2}\,e^{-t\,L}\,\frac{dt}{t}.
$$

\begin{theor}[\cite{Aus}]\label{theor:Aus:fract}
Let $p_-<p<q<p_+$ and $\alpha/n=1/p-1/q$. Then $L^{-\alpha/2}$ is
bounded from $L^p(\re^n)$ to $L^q(\re^n)$.
\end{theor}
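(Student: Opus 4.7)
The plan is to establish weak-type $(\tilde p_i, \tilde q_i)$ inequalities for $L^{-\alpha/2}$ at two auxiliary pairs straddling $(p,q)$ and then conclude by Marcinkiewicz interpolation. The weak-type inequalities will be extracted from the $L^a-L^b$ full off-diagonal estimates for $\{e^{-tL}\}_{t>0}$, available for all $p_-<a\le b<p_+$ by Proposition \ref{prop:sgfull}(a).

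First I choose $\tilde p_0,\tilde p_1$ with $p_-<\tilde p_0<p<\tilde p_1<p_+$ close enough to $p$ so that the exponents $\tilde q_i$ defined by $1/\tilde q_i=1/\tilde p_i-\alpha/n$ satisfy $p_-<\tilde q_0<q<\tilde q_1<p_+$. This is possible precisely because $p_-<p<q<p_+$. For each $i$ I will prove $L^{-\alpha/2}\colon L^{\tilde p_i}\to L^{\tilde q_i,\infty}$. Since $\tilde q_0\ne\tilde q_1$, Marcinkiewicz interpolation then yields strong type $(p,q)$; the intermediate pair automatically satisfies $1/p-1/q=\alpha/n$ because both endpoints do.

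To prove the weak-type $(\tilde p_0,\tilde q_0)$ bound (the other case is symmetric), pick auxiliary $a,b$ with $\tilde p_0<a<\tilde q_0<b<p_+$ and split, for $A>0$,
\[
L^{-\alpha/2}f \;=\; \frac{1}{\Gamma(\alpha/2)}\left(\int_0^A +\int_A^\infty\right) t^{\alpha/2}\,e^{-tL}f\,\frac{dt}{t}\;=:\;T_A^1 f+T_A^2 f.
\]
Minkowski's inequality together with the $L^{\tilde p_0}-L^s$ bound $\|e^{-tL}g\|_s\lesssim t^{-(n/2)(1/\tilde p_0-1/s)}\|g\|_{\tilde p_0}$ (a special case of the full off-diagonal with $E=F=\re^n$), for $s\in\{a,b\}$, gives
\[
\|T_A^1 f\|_a \;\lesssim\; A^{(n/2)(1/a-1/\tilde q_0)}\|f\|_{\tilde p_0},\qquad \|T_A^2 f\|_b \;\lesssim\; A^{-(n/2)(1/\tilde q_0-1/b)}\|f\|_{\tilde p_0},
\]
where convergence of the $t$-integrals at $0$ and $\infty$ uses exactly $a<\tilde q_0<b$. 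Chebyshev's inequality then produces
\[
\bigl|\{|L^{-\alpha/2}f|>\lambda\}\bigr| \;\lesssim\; \lambda^{-a}\|T_A^1 f\|_a^{a}+\lambda^{-b}\|T_A^2 f\|_b^{b},
\]
and a short computation shows that the optimal choice $A\sim(\|f\|_{\tilde p_0}/\lambda)^{2\tilde q_0/n}$ makes both terms of order $(\|f\|_{\tilde p_0}/\lambda)^{\tilde q_0}$, which is the required weak-type bound.

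The main obstacle is the bookkeeping of exponents: one must arrange $a,b\in(p_-,p_+)$ with $a<\tilde q_0<b$ so that the relevant off-diagonal estimates apply and the $t$-integrals converge at both endpoints. All of these requirements follow from the chain $p_-<\tilde p_0<a<\tilde q_0<b<p_+$, whose existence is guaranteed by the hypothesis $q<p_+$. A routine density and limiting argument extends the weak-type inequalities from $L^\infty_c$ to $L^{\tilde p_i}$, after which Marcinkiewicz interpolation completes the proof.
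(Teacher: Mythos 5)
Your proposal is correct. The splitting at $t=A$, the uniform $L^{\tilde p_0}-L^s$ bounds obtained from Definition~\ref{def:full} with $E=F=\re^n$, the Chebyshev estimate, and the optimization $A\sim(\|f\|_{\tilde p_0}/\lambda)^{2\tilde q_0/n}$ all check out; writing $\alpha/2=(n/2)(1/\tilde p_0-1/\tilde q_0)$ and collecting exponents, both tail integrals converge exactly because $a<\tilde q_0<b$, both terms in the Chebyshev bound become $(\|f\|_{\tilde p_0}/\lambda)^{\tilde q_0}$ at the optimal $A$, and the Marcinkiewicz step closes because the scaling relation $1/\tilde q_i-1/\tilde p_i=-\alpha/n$ is affine and hence passes to the interpolated pair $(p,q)$.

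On the comparison: the paper itself gives no proof of Theorem~\ref{theor:Aus:fract}, merely a citation to \cite{Aus}, so there is no in-text argument to check yours against. What the paper does make explicit is that the \emph{weighted} generalization, Theorem~\ref{theor:main:fract}, is proved in \cite{AM5} ``based on a version of Theorem~\ref{theor:big} adapted to the case of fractional operators and involving fractional maximal functions'' --- i.e.\ the generalized Calder\'on--Zygmund / good-$\lambda$ framework of Section~\ref{section:CZ}, with $\A_B=I-(I-e^{-r^2L})^m$, not a weak-type endpoint plus Marcinkiewicz argument. Your Hedberg-type splitting is more elementary and entirely self-contained for the unweighted statement, and is a perfectly natural route (the weak-type endpoints with interpolation are in fact forced, since with $p_->1$ one has no pointwise kernel majorization by a classical Riesz potential as in Remark~\ref{remark:gauss-bounds}). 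The trade-off is that your argument does not transfer to the weighted setting, where one cannot simply Chebyshev against Lebesgue measure and where the admissible weights $A_{1+1/p_--1/p}\cap RH_{q(p_+/q)'}$ are precisely what the good-$\lambda$ machinery is designed to capture. Two small housekeeping points you should make explicit if you flesh this out: (1) the implicit constants in $\|e^{-tL}g\|_s\lesssim t^{-(n/2)(1/\tilde p_0-1/s)}\|g\|_{\tilde p_0}$ must be uniform in $t$, which is exactly what Proposition~\ref{prop:sgfull}(a) supplies; and (2) the identification $L^{-\alpha/2}f=T_A^1f+T_A^2f$ should first be made on a suitable dense class (e.g.\ $f\in L^2\cap L^{\tilde p_0}$ with the integral converging in $L^a+L^b$) before the density/limiting step at the end.
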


\begin{remark}\label{remark:gauss-bounds}\rm
A special case of this result with $p_-=1$ and $p_+=\infty$  is when
 $L=-\Delta$ as one has that
$L^{-\alpha/2}=I_\alpha$, the classical Riesz potential whose
kernel is $c\,|x|^{-(n-\alpha)}$. If one has a Gaussian kernel bounds, then $|L^{-\alpha/2} f|\lesssim   I_\alpha(|f|)$ and  the result follows at once from the well known estimates for $I_\alpha$.
\end{remark}

\begin{theor}[\cite{AM5}]\label{theor:main:fract}
Let $p_-<p<q<p_+$ and $\alpha/n=1/p-1/q$. Then $L^{-\alpha/2}$ is
bounded from $L^p(w^p)$ to $L^q(w^q)$ for every $w\in A_{1+\frac1{p_-}-\frac1{p}}\cap RH_{q\,(\frac{p_+}{q})'}$. Furthermore, for every $k\in\NN$ and  $b\in\BMO$, we have that
$(L^{-\alpha/2})_b^k$ ---the $k$-th order commutator of $L^{-\alpha/2}$--- satisfies the same estimates.
\end{theor}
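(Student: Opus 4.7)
The plan is to follow the general scheme of Section \ref{section:proofs}, using Theorems \ref{theor:small} and \ref{theor:big} in a two-weight formulation suited to operators that change the Lebesgue exponent. Concretely, I would take $\A_B = I-(I-e^{-r^2 L})^m$ with $r=r(B)$ and $m$ large enough (depending on $\alpha$, $p_-$, $p_+$, $p$, $q$), and use the integral representation
\[
L^{-\alpha/2}f = \frac{1}{\Gamma(\alpha/2)}\int_0^\infty t^{\alpha/2}\, e^{-tL}f\, \frac{dt}{t}
\]
to split $L^{-\alpha/2}$ into a local part $\int_0^{r^2}$ and a global part $\int_{r^2}^\infty$. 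For the local part, the family $\{t^{\alpha/2}e^{-tL}\}_{0<t<r^2}$ inherits $L^{p_0}-L^{q_0}$ off-diagonal estimates from Proposition \ref{prop:sgfull}, with the gain in integrability matching $\alpha/n = 1/p_0 - 1/q_0$ after integration in $t$. The smoothing contribution coming from $(I-e^{-r^2L})^m$ handles the global part via the cancellation already exploited in the treatments of the functional calculus and the Riesz transforms.

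The unweighted bound of Theorem \ref{theor:Aus:fract} supplies the required \emph{a priori} $L^{p_0}\to L^{q_0}$ boundedness. A first application, with $d\mu=dx$, verifies two-weight analogues of \eqref{small:T:I-A} and \eqref{big:T:A}, namely
\[
\Big(\aver{C_j(B)} |L^{-\alpha/2}(I-\A_B)f|^{q_0}\, dx\Big)^{1/q_0} \le \alpha_j \Big(\aver{B}|f|^{p_0}\, dx\Big)^{1/p_0}
\]
and an analogous estimate for $L^{-\alpha/2}\A_B f$ controlled by averages of $L^{-\alpha/2}f$ itself, with rapidly decaying $\alpha_j$. A second pass, after fixing $w$, replaces the full off-diagonal estimates by the weighted off-diagonal estimates on balls from Proposition \ref{prop:sg-w:extension}, now with $dw^p$ on the source side and $dw^q$ on the target side. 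The class $A_{1+\frac{1}{p_-}-\frac{1}{p}} \cap RH_{q(p_+/q)'}$ arises as the two-weight analogue of $\W_w(p_-,p_+)$: the $A$-index governs how far the source exponent $p$ may be pushed below $p_-$ (a Calder\'on--Zygmund style argument in the spirit of Theorem \ref{theor:small}), while the reverse H\"older index controls how far the target exponent $q$ may be pushed above $q_+$ (the good-$\lambda$ mechanism of Theorem \ref{theor:big}).

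For the commutator statement I would invoke the BMO commutator theorem of Section \ref{section:further}: using the binomial decomposition $(b(x)-b)^k = \sum_{\ell=0}^k \binom{k}{\ell}(b(x)-b_B)^{k-\ell}(b_B-b)^\ell$ and John--Nirenberg, the verification of the off-diagonal inputs for $(L^{-\alpha/2})_b^k$ reduces to the same estimates already checked for $L^{-\alpha/2}$, at the price of an extra factor $\|b\|_{\BMO}^k$ and a polynomial weight $j^k$ inserted into $\alpha_j$. Since the decay of $\alpha_j$ coming from the semigroup off-diagonal estimates beats any polynomial, summability is preserved and the same $(p,q,w)$ range is obtained.

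The main obstacle is the two-weight bookkeeping. I expect the delicate point to be writing down a clean $L^{p_0}(dx)\to L^{q_0}(dx)$/$L^p(w^p)\to L^q(w^q)$ variant of Theorems \ref{theor:small} and \ref{theor:big}, in which the right-hand side of \eqref{big:T:A} becomes a $q_0$-average of $L^{-\alpha/2}f$ against $dw^q$ while the left-hand side is a $p_0$-average of $f$ against $dw^p$, and extracting from the interpolation/good-$\lambda$ argument the precise indices $1+\frac{1}{p_-}-\frac{1}{p}$ and $q(p_+/q)'$. Once that template is in place, the remaining verifications for $\A_B = I-(I-e^{-r^2L})^m$ are routine semigroup off-diagonal computations parallel to those in \cite{AM3}.
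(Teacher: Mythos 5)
Your roadmap captures the general shell of the method used in \cite{AM5} --- approximation operators $\A_B=I-(I-e^{-r^2L})^m$, splitting the integral formula for $L^{-\alpha/2}$ at scale $r^2$, and semigroup off-diagonal estimates from Propositions \ref{prop:sgfull} and \ref{prop:sg-w:extension} --- but it misses the single ingredient the paper itself singles out as essential. The paper's hint is explicit: ``a version of Theorem \ref{theor:big} adapted to the case of fractional operators and \emph{involving fractional maximal functions}.'' Nowhere in your sketch does the fractional maximal operator $M_\alpha$ appear. This is not a cosmetic omission. In the good-$\lambda$ proof of Theorem \ref{theor:big}, the error terms one cannot avoid in an analogue of \eqref{big:T:A} take the form $\inf_{x\in B} M(|f|^{p_0})(x)^{1/p_0}$; when the target exponent jumps from $p$ to $q$ with $\alpha/n=1/p-1/q$, the correct replacement is a fractional maximal function $M_{\alpha p_0}(|f|^{p_0})^{1/p_0}$ (or its variant). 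It is precisely this substitution that produces the weight condition $w\in A_{1+\frac1{p_-}-\frac1p}\cap RH_{q(p_+/q)'}$, since this is the $A_{p,q}$-type class governing the boundedness of $M_\alpha$ from $L^p(w^p)$ to $L^q(w^q)$. If you run the good-$\lambda$ argument with the ordinary maximal function, you cannot reach the stated class of weights.

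A second, smaller misalignment: you propose also to ``verify two-weight analogues of \eqref{small:T:I-A}'' and appeal to a fractional version of Theorem \ref{theor:small} to ``push $p$ below $p_-$.'' That step is unnecessary and in fact unavailable here. The unweighted a priori input, Theorem \ref{theor:Aus:fract}, already covers the entire range $p_-<p<q<p_+$, so one does not need a Calder\'on--Zygmund decomposition step to descend; and the condition in Theorem \ref{theor:main:fract} is $p_-<p$ and $q<p_+$, not $p<p_-$ or $q>q_+$ (you also write $q_+$ where $p_+$ is meant --- $q_+$ is the gradient endpoint and plays no role for $L^{-\alpha/2}$). Only a fractional version of Theorem \ref{theor:big} is used. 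The commutator part of your sketch is otherwise reasonable and stays within the machinery of Section \ref{section:further}, but it inherits the same gap: the commutator version of the fractional Theorem \ref{theor:big} again has fractional maximal functions, not $M$, in its error terms, and without this one cannot identify the weight class and summability condition $\sum_j \alpha_j j^k<\infty$ required.
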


The proof of this result is based on a version of Theorem \ref{theor:big} adapted to the case of fractional operators and involving fractional maximal functions.

\begin{remark}\rm
In the classical case of the  commutator with the Riesz potential, unweighted estimates were considered in \cite{Chan}. Weighted estimates  were established in \cite{ST} by means
of extrapolation. Another proof based on a good-$\lambda$ estimate
was given in \cite{CF}. For $k=1$ and elliptic operators $L$ with
Gaussian kernel bounds, unweighted estimates were studied in
\cite{DY2} using the sharp maximal function introduced in \cite{Ma1}, \cite{Ma2}. In that case, a simpler proof, that also yields the weighted
estimates, was obtained in \cite{CMP} using  the
pointwise estimate
$
\big|[b,L^{-\alpha/2}] f(x)\big|
\lesssim
I_\alpha (|b(x)-b|\,|f|)(x).
$
A discretization method inspired by \cite{Pe2}  is used to show that
the latter operator is controlled in $L^1(w)$ by $M_{L\,\log L,
\alpha} f$ for every $w\in A_\infty$. From
here, by the extrapolation techniques developed in
\cite{CMP}, this control can be extended to $L^p(w)$ for $0<p<\infty$,
$w\in A_\infty$ and consequently the weighted estimates of
$[b,L^{-\alpha/2}]$ reduce to those of $M_{L\,\log L, \alpha}$ which
are studied in \cite{CF}.
\end{remark}

\subsection{Riesz transform on manifolds}

Let $M$ be a complete non-compact  Riemannian manifold with $d$ its
geodesic distance and $\mu$ the volume form. Let  $\Delta$ be the positive Laplace-Beltrami
operator on $M$ given by
$$
\langle\Delta f, g\rangle = \int_M \nabla f \cdot \nabla g \, d\mu
$$
where $\nabla$ is the Riemannian gradient on $M$ and $\cdot$ is an inner product on $TM$. The Riesz transform is the tangent space valued operator   $\nabla \Delta^{-1/2}$ and it is bounded from $L^2(M,\mu)$ into $L^2(M;TM, \mu)$ by construction.

The manifold $M$ verifies the
doubling volume property if $\mu$ is doubling:
$$
\mu(B(x,2\,r))\le C\,\mu(B(x,r))<\infty, \eqno(D)
$$
for all $x\in M$ and $r>0$ where $B(x,r)=\{y\in M: d(x,y)<r\}$. A Riemannian manifold
$M$ equipped with the geodesic distance and a doubling volume form is a
space of homogeneous type. Non-compactness of $M$  implies  infinite
diameter, which together with the doubling volume property yields
$\mu(M)=\infty$ (see for instance
\cite{Ma2}).

One says that the heat kernel $p_t(x,y)$ of the semigroup $e^{-t\Delta}$ has Gaussian upper bounds if  for some constants $c,C>0$ and all $t>0, x,y \in M$,
$$
p_t(x,y) \le  \frac  C{\mu(B(x,\sqrt t))} \, e^{-c \frac{d^2(x,y)}{t}}. \eqno(GUB)
$$
It is known that under doubling  it is a consequence of the same inequality only at  $y=x$  \cite[Theorem 1.1]{G}.

\begin{theor}[\cite{CD}]\label{Rp>2}
Under $(D)$ and $(GUB)$, then
$$
\big\|\,|\nabla\Delta^{-1/2}f|\,\big\|_p \le C_p
\|f\|_p \eqno (R_{p})
$$
holds for $1<p< 2$ and all $f\in L^\infty_c(M)$.
\end{theor}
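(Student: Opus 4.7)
The plan is to apply Theorem \ref{theor:small} on the space of homogeneous type $(M, d, \mu)$ with $p_0 = 1$, $q_0 = 2$, to the sublinear operator $|\nabla\Delta^{-1/2}|$. The $L^2$ boundedness is automatic by construction via the spectral theorem. I would take $\A_B = I - (I - e^{-r_B^2 \Delta})^m$ for some fixed $m \ge 1$, where $r_B$ is the radius of $B$. Once conditions \eqref{small:T:I-A:variant} and \eqref{small:A} are verified, Theorem \ref{theor:small} gives the weak-type $(1,1)$ estimate, which interpolated with the $L^2$ bound via Marcinkiewicz produces $(R_p)$ for every $1 < p < 2$.

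For condition \eqref{small:A}, I would note that $\A_B = \sum_{k=1}^m (-1)^{k+1}\binom{m}{k}\, e^{-k r_B^2 \Delta}$ is a finite linear combination of heat semigroup operators at time comparable to $r_B^2$. The Gaussian upper bound $(GUB)$ yields, for $f$ supported in $B$ and $x \in C_j(B)$,
$$
|e^{-k r_B^2 \Delta} f(x)| \le \int_B p_{k r_B^2}(x,y)\,|f(y)|\,d\mu(y) \lesssim \frac{e^{-c\, 4^j}}{\mu(B(x, r_B))}\,\|f\|_{L^1(B)} \lesssim \frac{2^{jD}\, e^{-c\, 4^j}}{\mu(B)}\,\|f\|_{L^1(B)},
$$
where the last comparison uses $(D)$ to relate $\mu(B(x, r_B))$ to $\mu(B)$. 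Averaging the square over $C_j(B)$ yields \eqref{small:A} with $\alpha_j = 2^{jD}\, e^{-c\, 4^j}$, and $\sum_j \alpha_j\, 2^{jD} < \infty$ trivially.

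The core of the argument is condition \eqref{small:T:I-A:variant}, i.e.\ the $L^1 \to L^1(M\setminus 4B)$ bound for the composition $\nabla\Delta^{-1/2}(I - e^{-r_B^2\Delta})^m$. Using the subordination formula
$$
\nabla\Delta^{-1/2}(I-e^{-r_B^2\Delta})^m = \frac{1}{\sqrt\pi}\int_0^\infty \sqrt t\,\nabla e^{-t\Delta}\,(I-e^{-r_B^2\Delta})^m\,\frac{dt}{t},
$$
I would split the time integral at $t = r_B^2$. In the short-time regime $t \le r_B^2$, the factor $(I - e^{-r_B^2\Delta})^m$ is merely uniformly $L^2$-bounded and the Gaussian off-diagonal decay has to come entirely from $\sqrt t\,\nabla e^{-t\Delta}$. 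In the long-time regime $t > r_B^2$, the representation
$$
(I-e^{-r_B^2\Delta})^m = \int_{[0,r_B^2]^m} \Delta^m\, e^{-(s_1+\cdots+s_m)\Delta}\, ds_1\cdots ds_m
$$
combined with the analyticity bound $\|s^m\Delta^m e^{-s\Delta}\|_{L^2\to L^2} \lesssim 1$ extracts an extra factor $(r_B^2/t)^m$ while preserving the off-diagonal Gaussian decay. After summing the two regimes against $dt/t$, one obtains decay in $j$ of order $e^{-c\, 4^j} + 4^{-jm}$, which is summable against the volume factor $\mu(2^{j+1} B)^{1/2}$ picked up by Cauchy--Schwarz provided $m$ is large enough relative to the doubling dimension $D$.

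The main obstacle is that $(GUB)$ gives no pointwise control on the gradient $\nabla_x p_t(x,y)$; the off-diagonal decay of $\sqrt t\,\nabla e^{-t\Delta}$ therefore has to be extracted at the $L^2$ level. The crucial input is the Davies--Gaffney estimate
$$
\|\sqrt t\,\nabla e^{-t\Delta}(\bigchi_E f)\|_{L^2(F)} \lesssim e^{-c\,d(E,F)^2/t}\,\|f\|_{L^2(E)},
$$
for disjoint closed $E, F \subset M$, which one derives from the $L^2$ off-diagonal bound on the semigroup (a direct consequence of $(GUB)$) combined with the integration-by-parts identity $\|\sqrt t\,\nabla e^{-t\Delta}f\|_2^2 \le \langle t\Delta e^{-t\Delta}f,\, e^{-t\Delta}f\rangle$ and the analytic semigroup bound on $t\Delta e^{-t\Delta}$. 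Composing with one half of the semigroup converts this $L^2 \to L^2$ off-diagonal estimate into the $L^1 \to L^2$ bound with Gaussian decay needed to close the argument against the $L^1$ datum on $B$.
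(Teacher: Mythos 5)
Your plan is the right one and matches the route taken for this result in \cite{CD} (and systematized later in \cite{BK1}, \cite{Aus}, \cite{AM1}, \cite{AM4}): apply Theorem~\ref{theor:small} with $p_0=1$, $q_0=2$, $\A_B=I-(I-e^{-r_B^2\Delta})^m$, verify \eqref{small:A} from the pointwise Gaussian bound $(GUB)$, and verify \eqref{small:T:I-A:variant} via the subordination formula, splitting at $t=r_B^2$ and extracting $(r_B^2/t)^m$ from $(I-e^{-r_B^2\Delta})^m$ in the long-time regime so that the sum over annuli converges once $m$ is taken large relative to the doubling order $D$. The original argument in \cite{CD} uses $m=1$ and compensates with a duality step; using general $m$ as you do (the Blunck--Kunstmann refinement adopted in \cite{Aus}, \cite{AM1}) makes the annular sum converge directly and is the cleaner variant, so this is a cosmetic rather than substantive departure.

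One technical point deserves a correction. The displayed identity
$\|\sqrt{t}\,\nabla e^{-t\Delta}f\|_2^2 = t\,\langle \Delta e^{-t\Delta}f, e^{-t\Delta}f\rangle$
is a \emph{global} $L^2(M)$ statement; applying it to $f=\bigchi_E g$ and invoking analyticity only bounds $\|\sqrt{t}\,\nabla e^{-t\Delta}(\bigchi_E g)\|_{L^2(M)}$, with no localization to $F$, so it does not by itself yield the Davies--Gaffney estimate you state. To localize one must insert a Lipschitz cut-off $\eta$ (equal to $1$ on $F$, vanishing near $E$, $|\nabla\eta|\lesssim d(E,F)^{-1}$) and integrate $\eta^2\,\bar u\,\Delta u$ by parts, i.e.\ run a Caccioppoli argument; the cross term is absorbed and one then uses the Gaffney bounds for $e^{-t\Delta}$ and $t\Delta e^{-t\Delta}$ on $\supp\eta$. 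This is a standard lemma (valid on any complete manifold, in fact without $(GUB)$), so the gap is easily filled, but the derivation as written is incomplete. With that repaired, the rest of your proposal --- composing the $L^2$--$L^2$ gradient Gaffney estimate with an $L^1$--$L^2$ heat-kernel bound to close against the $L^1$ datum on $B$, then summing --- is exactly the proof.
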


 Here,  $|\cdot|$ is the norm on $TM$ associated with the inner product.

We shall set
$$
q_{+}=\sup\big\{p\in (1,\infty)\, : \, (R_p) \ {\rm holds}\big\}.
$$
which  satisfies $q_{+}\ge 2$ under the assumptions of Theorem \ref{Rp>2}. It can
be equal to 2 (\cite{CD}). It is  bigger than 2 assuming further  the stronger $L^2$-Poincar\'{e} inequalities  (\cite{AC}). It can be  equal to $+\infty$ (see below).

Let us turn to weighted estimates.

\begin{theor}[\cite{AM4}]\label{theor:Rp:w}
Assume $(D)$ and $(GUB)$. Let $w\in A_{\infty}(\mu)$.
\begin{list}{$(\theenumi)$}{\usecounter{enumi}\leftmargin=.85cm
\labelwidth=.7cm\labelsep=0.25cm \itemsep=0.15cm \topsep=.3cm
\renewcommand{\theenumi}{\roman{enumi}}}

\item For $p\in \W_{w}(1, q_{+})$, the Riesz transform is of strong-type $(p,p)$ with respect to $w\,d\mu$, that is,
\begin{equation}\label{eq:Rp:w}
\big\|\, |\nabla \Delta^{-1/2}f|\, \big\|_{L^p(M,w)}
\le
C_{p,w}\, \|f\|_{L^p(M,w)}
\end{equation}
for all $f\in L^\infty_c(M)$.

\item If  $w\in A_{1}(\mu) \cap RH_{(q_{+})'}(\mu) $, then the
Riesz transform is of weak-type $(1,1)$ with respect to $w\,d\mu$,
that is,
\begin{equation}\label{eq:R1:w}
\big\|\, |\nabla \Delta^{-1/2}f|\, \big\|_{L^{1,\infty}(M,w)}
\le
C_{1,w}\, \|f\|_{L^1(M,w)}
\end{equation}
for all $f\in L^\infty_c(M)$.
\end{list}

\end{theor}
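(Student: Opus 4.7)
The plan is to follow the two-step scheme of Section~\ref{section:proofs}. Set $\A_B = I - (I - e^{-r^2\Delta})^m$ with $r = r(B)$ and $m \in \NN$ large enough (depending on the doubling orders of $\mu$ and $w\,d\mu$). Under $(D) + (GUB)$, the Laplace--Beltrami operator $\Delta$ plays the role of the elliptic operator $L$ with $p_{-}(\Delta) = 1$ and $p_{+}(\Delta) = \infty$, while the number $q_{+}$ plays the role of the upper endpoint for the gradient semigroup.

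The first step is to transfer Propositions~\ref{prop:sgfull} and~\ref{prop:sg-w:extension} to this setting. The bound $(GUB)$ is equivalent to $L^1-L^\infty$ full off-diagonal estimates for $\{e^{-t\Delta}\}_{t>0}$ on $(M,\mu)$. Combined with the unconditional $L^2$ Gaffney estimates for $\{\sqrt{t}\,\nabla e^{-t\Delta}\}_{t>0}$ and the very definition of $q_{+}$, interpolation supplies $L^p-L^q$ full off-diagonal estimates for the gradient semigroup when $1 \le p \le q < q_{+}$. The extrapolation machinery of \cite{AM2} then gives, for every $w \in A_\infty(\mu)$, the corresponding $L^p(w)-L^q(w)$ off-diagonal estimates on balls: for the semigroup, with $p, q \in \W_w(1, \infty) = (r_w, \infty]$, and for its gradient, with $p, q \in \W_w(1, q_{+})$.

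For part~(i), I would apply Theorem~\ref{theor:big}(b) with $T = \nabla\Delta^{-1/2}$, $p_0 = 1$, and $q_0 < q_{+}$ chosen so that $p \in \W_w(1, q_0)$ (possible because $p < q_{+}/(s_w)'$). The verification of \eqref{big:T:I-A} rests on the reproducing formula
\[
\nabla\Delta^{-1/2}(I - e^{-r^2\Delta})^m f = c_m \int_0^\infty \bigl(\sqrt{s}\,\nabla e^{-s\Delta}\bigr)\,\psi_m(s/r^2)\,\frac{ds}{s},
\]
splitting the $s$-integral at $s = r^2$: for small $s$, the Gaussian factor $\expt{-c\,4^j r^2/s}$ coming from the weighted off-diagonal estimate on $\sqrt{s}\,\nabla e^{-s\Delta}$ yields summable coefficients $\alpha_j$; for large $s$, the decay $(r^2/s)^m$ built into $\psi_m$ dominates once $m$ is large enough. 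Condition \eqref{big:T:A} is obtained by expanding $\A_B = \sum_{k=1}^m \binom{m}{k}(-1)^{k+1} e^{-k r^2\Delta}$ and estimating each contribution $\nabla\Delta^{-1/2} e^{-k r^2\Delta} f = \nabla e^{-k r^2\Delta}\Delta^{-1/2} f$ so as to reintroduce $Tf$ on the right-hand side, absorbing residual terms via the a priori $L^{q_0}(w)$ boundedness of $T$ inherited from the unweighted theory of \cite{CD, ACDH, Aus}.

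For part~(ii), I would apply Theorem~\ref{theor:small} on the doubling space $(M, d, w\,d\mu)$ with $p_0 = 1$ and $q_0 \in \W_w(1, q_{+})$. The hypothesis $w \in A_1(\mu) \cap RH_{(q_{+})'}(\mu)$ exactly guarantees that such a $q_0$ exists, and part~(i) supplies the required boundedness of $T$ on $L^{q_0}(w)$. Conditions \eqref{small:T:I-A} and \eqref{small:A} with measure $dw\,d\mu$ are produced by the same off-diagonal computations as in part~(i), applied through the weighted off-diagonal estimates of Proposition~\ref{prop:sg-w:extension}; the $A_1$-condition lets one pass from $d\mu$- to $dw$-averages in the spirit of Remark~\ref{remark:CZO-weak}. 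The main obstacle throughout is the verification of \eqref{big:T:A}: on a general Riemannian manifold $\nabla$ does not commute with $e^{-t\Delta}$, so one cannot simply extract $Tf$ from $\nabla e^{-k r^2\Delta}\Delta^{-1/2} f$; a careful factorization, combined with the off-diagonal decay of the semigroup at scale $r$, is needed to recover the correct right-hand side. Full details are carried out in \cite{AM4}.
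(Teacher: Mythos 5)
Your proposal follows the Section~\ref{section:proofs} two-step scheme --- direct verification of \eqref{big:T:I-A}--\eqref{big:T:A} for $T = \nabla\Delta^{-1/2}$ and $\A_B = I - (I-e^{-r^2\Delta})^m$, first on $(M,\mu)$ and then on $(M,w\,d\mu)$ --- but this is \emph{not} the route the paper takes for this theorem, and for good reason. The paper states explicitly that here ``the strategy of proof is a little bit different'': part~(i) is obtained, following Bernicot--Zhao~\cite{BZ}, from a good-$\lambda$ inequality \emph{together with a duality argument}, and part~(ii) from a weighted variant of Theorem~\ref{theor:small}. The reason the paper detours through duality is precisely the obstacle you flag at the end of your proposal: to verify \eqref{big:T:A} one would need, after factoring $\nabla\Delta^{-1/2}e^{-kr^2\Delta} = \nabla e^{-kr^2\Delta}\Delta^{-1/2}$, to commute $\nabla$ past $e^{-kr^2\Delta}$ so as to make $Tf = \nabla\Delta^{-1/2}f$ reappear on the right-hand side, and on a general Riemannian manifold this commutation fails. (The paper itself draws attention to this, remarking in connection with $G_L$ that Stein's classical argument for $-\Delta$ on $\RR^n$ relies on exactly this commutation and ``here, no such thing is possible''.) The off-diagonal estimates you invoke for $\sqrt{s}\,\nabla e^{-s\Delta}$ give control in terms of averages of $|\Delta^{-1/2}f|$, not of $|\nabla\Delta^{-1/2}f|$, and the ``careful factorization'' you gesture toward is precisely the step that is missing; appealing to ``a priori $L^{q_0}(w)$ boundedness of $T$ inherited from the unweighted theory'' is also circular, since the unweighted theory gives $L^{q_0}(d\mu)$, not $L^{q_0}(w\,d\mu)$, and the weighted bound is what is being proved. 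So there is a genuine gap in your part~(i). The duality trick of~\cite{BZ} replaces the problematic verification of \eqref{big:T:A} by working with the adjoint $\Delta^{-1/2}\div$ on the dual weighted space, where one runs a Calder\'on--Zygmund/good-$\lambda$ argument that only asks for off-diagonal decay of $e^{-t\Delta}$ (available from $(GUB)$) applied to the post-composed operator, rather than asking $\nabla$ to commute through the semigroup. Your treatment of part~(ii), applying Theorem~\ref{theor:small} on the space $(M,d,w\,d\mu)$ with $p_0=1$ and using the $A_1$-condition to pass between $d\mu$- and $dw$-averages, is in the spirit of what the paper actually does and is essentially sound, modulo the fact that its input is the strong type from part~(i), whose proof needs to be repaired as above.
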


Here, the strategy of proof is a little bit different. Following ideas of \cite{BZ}, part $(i)$ uses the tools to prove Theorem \ref{theor:big}, namely a good-$\lambda$ inequality, together with a duality argument. For part $(ii)$, it uses a weighted variant of Theorem  \ref{theor:small}.
The operator $\A_{B}$ is given by $I- (I-e^{-r^2\, \Delta})^m$ with $m$ large enough and $r$ the radius of $B$. Note that here, the heat semigroup satisfies unweighted $L^1-L^\infty$ off-diagonal estimates on balls from $(GUB)$, so the kernel of $\A_{B}$ has a pointwise upper bound.

\begin{remark}\rm
Given $k\in\NN$ and $b\in\BMO(M,\mu)$ one can consider the $k$-th order
commutator of the Riesz transform  $(\nabla \Delta^{-1/2})_b^k$. This
operator satisfies \eqref{eq:Rp:w}, that is, $(\nabla
\Delta^{-1/2})_b^k$ is bounded on $L^p(M,w)$ under the same
conditions on $M,w,p$.
\end{remark}

If   $q_{+}=\infty$  then the Riesz transform is bounded on
$L^p(M,w)$ for $r_{w}<p<\infty$, that is, for $w\in A_{p}(\mu)$, and
we obtain the same weighted theory as for the Riesz transform on
$\RR^n$ :

\begin{corol}[\cite{AM4}]\label{corol:q+infty}
Let $M$ be a complete non-compact Riemannian manifold satisfying the
doubling volume property and Gaussian upper bounds. Assume that the Riesz
transform has strong type $(p,p)$ with respect to $d\mu$  for all $1<p<\infty$. Then the Riesz
transform has strong type $(p,p)$ with respect to $w\, d\mu$ for all $w\in A_{p}(\mu)$ and
$1<p<\infty$ and it is of weak-type $(1,1)$ with respect to $w\,
d\mu$ for all $w\in A_{1}(\mu)$.
\end{corol}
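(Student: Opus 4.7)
The plan is to show that the corollary is simply the specialization of Theorem \ref{theor:Rp:w} to the case $q_{+}=\infty$, so the whole argument reduces to checking that the abstract classes of admissible weights that appear there collapse to the classical Muckenhoupt classes in this limiting situation.

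First, I would record that by the very definition of $q_{+}$, the hypothesis that the Riesz transform on $M$ is of strong type $(p,p)$ with respect to $d\mu$ for every $1<p<\infty$ is exactly the statement $q_{+}=\infty$. From now on the doubling property $(D)$ and the Gaussian upper bound $(GUB)$ are in force, so the hypotheses of Theorem \ref{theor:Rp:w} are automatically fulfilled for any $w\in A_{\infty}(\mu)$.

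Next I would compute $\W_{w}(1,q_{+})$ when $q_{+}=\infty$. Using the identity
$$
\W_{w}(p_{0},q_{0})=\Big\{p : p_{0}<p<q_{0},\ w\in A_{\frac{p}{p_{0}}}\cap RH_{\left(\frac{q_{0}}{p}\right)'}\Big\}
$$
with $p_{0}=1$ and $q_{0}=\infty$, the reverse H\"older condition becomes $RH_{1}$, which is vacuous, and the Muckenhoupt condition becomes $w\in A_{p}$. Hence
$$
\W_{w}(1,\infty)=\{p\in(1,\infty) : w\in A_{p}(\mu)\}.
$$
Fixing $1<p<\infty$ and $w\in A_{p}(\mu)\subset A_{\infty}(\mu)$, we then have $p\in\W_{w}(1,q_{+})$ and part $(i)$ of Theorem \ref{theor:Rp:w} delivers exactly \eqref{eq:Rp:w}, which is the strong-type statement in the corollary.

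For the weak-type assertion I would invoke part $(ii)$ of Theorem \ref{theor:Rp:w}: the required condition $w\in A_{1}(\mu)\cap RH_{(q_{+})'}(\mu)$ with $q_{+}=\infty$ reduces to $w\in A_{1}(\mu)\cap RH_{1}(\mu)=A_{1}(\mu)$, since $RH_{1}$ is satisfied by every weight. Therefore \eqref{eq:R1:w} holds for every $w\in A_{1}(\mu)$, finishing the proof. There is essentially no obstacle here beyond the bookkeeping above: the real work is packed into Theorem \ref{theor:Rp:w}, and the only point to verify is that the limiting indices $q_{+}=\infty$ make the $RH$-condition trivial, so the admissible weight classes become the full $A_{p}$ classes.
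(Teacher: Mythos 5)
Your argument is correct and is exactly the route the paper takes: the corollary is read off from Theorem~\ref{theor:Rp:w} once one observes that the hypothesis forces $q_{+}=\infty$, so $\W_w(1,q_{+})=\{p\in(1,\infty):w\in A_p(\mu)\}$ (equivalently $(r_w,\infty)$) and $RH_{(q_{+})'}=RH_1$ is vacuous. Nothing is missing; the bookkeeping on the weight classes is precisely what the paper sketches in the paragraph preceding the corollary.
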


Unweighted $L^p$ bounds for Riesz transforms in different specific
situations were reobtained in a unified manner in \cite{ACDH} assuming conditions on the heat kernel and its gradient. The
methods used there are precisely those which allowed us to start the
weighted theory in \cite{AM1}.

Let us  recall
three situations in which this corollary applies  (see
\cite{ACDH}, where more is done,  and the references therein): manifolds with non-negative Ricci curvature, co-compact covering manifolds with
polynomial growth deck transformation group, Lie groups with polynomial volume growth endowed with a sublaplacian. A situation where $q_+<\infty$ is conical manifolds with compact basis without boundary.


\begin{thebibliography}{AHLMT}
\parskip=0.1cm




\bibitem[Aus]{Aus} P.~Auscher, {\em On necessary and sufficient conditions for $L^p$ estimates of Riesz transform associated elliptic operators on $\re^n$ and related estimates},  Mem. Amer. Math. Soc. \textbf{186} (871) (2007).

\bibitem[AC]{AC} P.~Auscher \& T.~Coulhon,  {\em Riesz transforms on manifolds and Poincar\'{e} inequalities}, Ann. Sc. Norm. Super. Pisa Cl. Sci. (5) \textbf{4}  (2005), 1--25.


\bibitem[ACDH]{ACDH} P.~Auscher, T.~Coulhon, X.T.~Duong \& S.~Hofmann, {\em Riesz transforms on manifolds and heat kernel regularity}, Ann. Scient. ENS Paris \textbf{37} (2004), no. 6, 911--957.


\bibitem[AHLMT]{AHLMcT} P.~Auscher, S.~Hofmann, M.~Lacey, A.~M$^{\rm c}$Intosh \& Ph. Tchamitchian, {\em The solution of the Kato square root problem for second order elliptic operators on $\re^n$}, Ann. Math. (2) \textbf{156} (2002), 633--654.

\bibitem[AM1]{AM1} P.~Auscher \& J.M.~Martell, {\em Weighted norm inequalities,  off-diagonal estimates and
elliptic operators.  Part I:  General operator theory and
weights}, Adv. Math.  \textbf{212}  (2007),  no. 1, 225--276.

\bibitem[AM2]{AM2} P.~Auscher \& J.M.~Martell, {\em Weighted norm inequalities,  off-diagonal estimates and
elliptic operators. Part II: Off-diagonal estimates on spaces of
homogeneous type},  J. Evol. Equ.  \textbf{7}  (2007),  no. 2, 265--316.

\bibitem[AM3]{AM3} P.~Auscher \& J.M.~Martell, {\em Weighted norm inequalities,  off-diagonal estimates and
elliptic operators.  Part III: Harmonic analysis of elliptic
operators},  J. Funct. Anal.  \textbf{241}  (2006),  no. 2, 703--746.

\bibitem[AM4]{AM4} P.~Auscher \& J.M.~Martell, {\em Weighted norm inequalities, off-diagonal estimates and elliptic operators.  Part IV: Riesz transforms on manifolds and weights}, Math. Z. \textbf{260} (2008), no. 3, 527--539.

\bibitem[AM5]{AM5} P.~Auscher \& J.M.~Martell, {\em Weighted norm inequalities for fractional  operators}, Indiana Univ. Math. J.  \textbf{57} (2008), no. 4,  1845--1870.


\bibitem[AT]{AT} P.~Auscher \& Ph.~Tchamitchian, {\em Square root problem for divergence operators and related topics}, Ast\'{e}risque Vol. 249, Soc. Math. France, 1998.

\bibitem[BZ]{BZ} F.~Bernicot \& J.~Zhao, {\em Abstract Hardy Spaces}, J. Funct. Anal.  \textbf{255}  (2008), no. 7, 1761--1796.


\bibitem[BK1]{BK1} S.~Blunck \& P.~Kunstmann, {\em Calder\'{o}n-Zygmund theory for non-integral operators and the $H^\infty$-functional calculus}, Rev. Mat. Iberoamericana  \textbf{19}  (2003),  no. 3, 919--942.

\bibitem[BK2]{BK2} S.~Blunck \& P.~Kunstmann, \emph{Weak-type $(p,p)$ estimates for Riesz transforms}, Math. Z.  \textbf{247}  (2004),  no. 1, 137--148.



\bibitem[CP]{CP}  L.A.~Caffarelli \& I.~Peral, {\em On $W^{1,p}$ estimates for elliptic equations in divergence form}, Comm. Pure App. Math. \textbf{51} (1998), 1--21.


\bibitem[Cha]{Chan} S.~Chanillo, {\em A note on commutators}, Indiana Univ. Math. J. \textbf{31} (1982), 7--16.

\bibitem[CD]{CD} T.~Coulhon \& X.T.~Duong,  {\em Riesz transforms for $1\le p\le 2$}, Trans. Amer. Math. Soc.  \textbf{351} (1999), 1151--1169.



\bibitem[CF]{CF} D.~Cruz-Uribe \& A.~Fiorenza, {\em Endpoint estimates and weighted norm inequalities for commutators of fractional integrals}, Publ. Mat. \textbf{47} (2003), 103--131.


\bibitem[CMP]{CMP} D.~Cruz-Uribe, J.M.~Martell \& C.~P\'{e}rez, {\em Extrapolation results for $A_\infty$ weights and applications}, J. Funct. Anal. \textbf{213} (2004), 412--439.

\bibitem[DY1]{DY1} X.T.~Duong \& L.~Yan, {\em Commutators of BMO functions and singular integral operators with non-smooth kernels},  Bull. Austral. Math. Soc.  \textbf{67}  (2003),  no. 2, 187--200.

\bibitem[DY2]{DY2} X.T.~Duong \& L.~Yan, {\em On commutators of fractional integrals},  Proc. Amer. Math. Soc.  \textbf{132}  (2004), no.~12, 3549--3557.


\bibitem[GR]{GR} J.~Garc\'{\i}a-Cuerva \& J.L.~Rubio de Francia, {\em Weighted Norm Inequalities and Related Topics}, North Holland Math.~Studies 116, North Holland, Amsterdam, 1985.

\bibitem[Gra]{Gra} L.~Grafakos, {\em Classical and Modern Fourier Analysis}, Pearson Education, New Jersey, 2004.

\bibitem[Gri]{G} A.~Grigor'yan, {\em Gaussian upper bounds for the heat kernel on arbitrary manifolds}, J. Differential Geom.  \textbf{45}  (1997),  no. 1, 33--52.


\bibitem[HM]{HM}   S.~Hofmann \& J.M.~Martell, {\em $L^p$ bounds for Riesz transforms and square roots associated to second order elliptic operators}, Pub. Mat. \textbf{47} (2003), 497--515.


\bibitem[H\"{o}r]{Hor} L.~H\"{o}rmander, {\em Estimates for translation invariant operators in $L^p$ spaces}, Acta Math. \textbf{104} (1960), 93--140.


\bibitem[KW]{KW} N.~Kalton \& L.~Weis, {\em the $H^\infty$-calculus and sums of closed operators},  Math. Ann. \textbf{321} (2001), 319--345.

\bibitem[LeM]{LeM} C.~Le Merdy, {\em On square functions associated to sectorial operators}, Bull. Soc. Math. France  \textbf{132}  (2004),  no. 1, 137--156.

\bibitem[Ma1]{Ma1} J.M.~Martell, {\em Sharp maximal functions associated with approximations of the identity in spaces of homogeneous type and applications}, Studia Math. \textbf{161} (2004), 113--145.

\bibitem[Ma2]{Ma2} J.M.~Martell, {\em Desigualdades con pesos en el An\'{a}lisis de Fourier: de los espacios de tipo homog\'{e}neo a las medidas no doblantes}, Ph.D. Thesis, Universidad Aut\'{o}noma de Madrid, 2001.

\bibitem[McI]{Mc} A.~M$^{\rm c}$Intosh, {\em Operators which have an $H^\infty$ functional calculus}, {Miniconference on operator theory and partial differential equations}, Vol. 14, Center for Math. and Appl., 210--231, Canberra, 1986. Australian National Univ.

\bibitem[Per]{Pe2} C.~P\'{e}rez, {\em Sharp $L^p$-weighted Sobolev inequalities}, Ann. Inst. Fourier (Grenoble)  45  (1995),  no.~3, 809--824.


\bibitem[Sh1]{Shen1} Z.~Shen, {\em The $L^p$ Dirichlet problem for elliptic systems on Lipschitz domains},
Math Res. Letters \textbf{13} (2006),  143--159.

\bibitem[Sh2]{Shen2} Z.~Shen,  {\em Bounds of Riesz transforms on $L^p$ spaces for second order elliptic operators},  Ann. Inst. Fourier \textbf{55} (2005), no. 1,  173--197.

\bibitem[Ste]{Ste} E.M.~Stein, {\em Singular integrals and differentiability of functions}, Princeton Univ. Press, 1970.

\bibitem[ST]{ST} J.O.~Str\"{o}mberg \& A. Torchinsky, {\em Weighted Hardy spaces}, Lecture Notes in Mathematics 1381, Springer-Verlag, 1989.


\bibitem[Wei]{W} L.~Weis, {\em Operator-valued Fourier multiplier theorems and maximal $L^p$-regularity},  Math. Ann. \textbf{319} (2001), 735--758.

\end{thebibliography}
\end{document}